\newtheorem{thm}{Theorem}[section]
\newtheorem{cor}[thm]{Corollary}
\newtheorem{lem}[thm]{Lemma}
\newtheorem{prop}[thm]{Proposition}
\theoremstyle{definition}
\newtheorem{defn}[thm]{Definition}
\theoremstyle{remark}
\newtheorem{rem}[thm]{Remark}
\numberwithin{equation}{section}
\newcommand{\R}{\mathbb R}
\newcommand{\T}{\mathbb T}
\newcommand{\eps}{\varepsilon}
\newcommand{\X}{\mathcal{X}}
\newcommand{\st}{{\rm s}}           
\newcommand{\un}{{\rm u}}           
\newcommand{\tM}{ \widetilde{M}}
\newcommand{\tz}{ \tilde{z}}
\newcommand{\tPhi}{\tilde{\Phi}}
\newcommand{\bF}{\mathbf{F}}
\newcommand{\bI}{\bar{I}}
\newcommand{\btheta}{\bar{\theta}}
\newcommand{\bx}{\bar{x}}
\newcommand{\by}{\bar{y}}
\def\tLambda{ {\tilde \Lambda}}
\def\tGamma{\tilde{\Gamma}}
\begin{document}
\title[Non-conservative perturbations]
{Melnikov method  for non-conservative perturbations of the  three-body problem}
\author{Marian\ Gidea$^\dag$}
\address{Yeshiva University, Department of Mathematical Sciences, New York, NY 10016, USA }
\email{Marian.Gidea@yu.edu}
\thanks{$^\dag$ Research of M.G. was partially supported by NSF grant  DMS-1515851, and the Alfred P. Sloan Foundation grant G-2016-7320.}
\author{Rafael de la Llave${^\ddag}$}
\address{School of Mathematics, Georgia Institute of Technology, Atlanta, GA 30332, USA}
\email{rafael.delallave@math.gatech.edu}
\thanks{$^\ddag$ Research of R.L. was partially supported by NSF grant DMS-1800241,
 and H2020-MCA-RISE \#734577}
 \author{Maxwell Musser$^\dag$}
\address{Yeshiva University, Department of Mathematical Sciences, New York, NY 10016, USA }
\email{maxwell.musser@yu.edu }
\thanks{$^\dag$ Research of M.M. was partially supported by NSF grant  DMS-1515851, and the Alfred P. Sloan Foundation grant G-2016-7320.}

\subjclass[2010]{Primary,
37J40;  
37D05  	
37C29; 34C37; 
70F07  
Secondary,
70H08. 
} \keywords{Melnikov method; homoclinic and heteroclinic orbits;  three-body problem; astrodynamics.}
\date{}

\begin{abstract}
We consider the planar circular restricted three-body problem (PCRTBP), as a model for the motion of a spacecraft relative to the Earth-Moon system. We focus on  the Lagrange equilibrium points $L_1$ and $L_2$. There are families of Lyapunov periodic orbits around either  $L_1$ or $L_2$, forming Lyapunov manifolds.
There also exist homoclinic orbits to the  Lyapunov manifolds around either $L_1$ or $L_2$, as well as heteroclinic orbits between the Lyapunov manifold around $L_1$  and the one around  $L_2$. The motion along the homoclinic/heteroclinic orbits  can be described via the scattering map, which gives the future asymptotic of a homoclinic orbit as a function of the past asymptotic. In contrast with the more customary Melnikov theory, we do not need to assume
that the asymptotic orbits have
a special nature (periodic, quasi-periodic, etc.).

We add a non-conservative, time-dependent perturbation, as a model for a thrust applied to the spacecraft for some duration of time, or for some other effect, such as solar radiation pressure.
We compute the first order approximation of  the perturbed scattering map, in terms of fast convergent integrals of the perturbation along homoclinic/heteroclinic orbits of the unperturbed system. As a possible application, this result can be used to determine the trajectory of the spacecraft upon using the thrust.
\end{abstract}

\maketitle
\section{Introduction}
\subsection{Motivation} A motivation for this work is the following situation from astrodynamics. Consider a spacecraft traveling  between the Earth and the Moon. Assume that the spacecraft is coasting along the stable/unstable hyperbolic invariant manifolds associated to the periodic orbits near one of the center-saddle equilibrium points, at some fixed energy level. Such an orbit is driven by the gravitational fields of the Earth and the Moon, and does not require using the thrusters.  The total energy is preserved along the orbit.  One can describe the motion of the spacecraft in terms of some geometrically defined coordinates:  an `action' coordinate describing the size of the periodic orbit associated to the stable/unstable invariant manifold,   an `angle' coordinate describing the asymptotic phase of the motion,  and  a pair of `hyperbolic' coordinates describing the position of the spacecraft relative to the corresponding stable/unstable manifold.

Suppose now that we want to make a maneuver 
in order  to jump from the hyperbolic invariant manifold on the given energy level to another hyperbolic invariant manifold  on  a different energy level. Mathematically, turning on the thrusters amounts to  adding a small, non-conservative, time-dependent perturbation to the original system. If such a perturbation is given, we would like to  estimate its effect on the orbit of the spacecraft. More precisely, we would like to compute the change in the action and angle coordinates associated to the orbit as a result of applying the perturbation. We want to obtain such  an estimate in terms of the original trajectory of the unperturbed system, and of the particular perturbation.

Other non-Hamiltonian perturbations, for instance, due to solar radiation pressure and solar wind,  can be considered.

In this paper, we will investigate the  general  problem  of adding a non-Hamiltonian perturbation to a homoclinic/heteroclinic trajectory and computing  the effect on the homoclinic/heteroclinic orbits.

Note that adding a non-Hamiltonian perturbation (e.g., a small dissipation)
may destroy all the periodic orbits. Nevertheless, the normally hyperbolic
manifold, formed by the collection of periodic orbits,
persists (see details later). In contrast with the most customary versions of
the Melnikov theory, which assume that the asymptotic orbits are periodic
or quasiperiodic,  and that they preserve their nature under perturbation,
we consider  homoclinic excursions to a normally hyperbolic manifold.
The asymptotic orbits could change their nature under the perturbations.
For example, a family of Lyapunov periodic orbits  subject to a small dissipation  could get transformed into a family of orbits that converge to a critical point.

\subsection{Brief description of the  main results and   methodology}

In the model that we consider, the family of periodic orbits about either $L_1$ or $L_2$ forms a normally hyperbolic invariant manifold (NHIM). Each NHIM has hyperbolic stable and unstable manifolds. We will assume that
the stable and unstable manifolds of the NHIM corresponding to either $L_1$ or $L_2$ intersect transversally,  and also that the unstable (stable) manifold of the NHIM corresponding to $L_1$ intersects transversally the stable (unstable) manifold of the NHIM corresponding to $L_2$. This assumption can be verified numerically for a wide range of energy levels and mass parameters in the PCRTBP (see, e.g., \cite{koon2000heteroclinic}). Thus, there exist homoclinic orbits to either one of the NHIM's, as well as heteroclinic orbits between the two NHIM's. There exist scattering maps associated to the transverse homoclinic intersections, as well as to the transverse heteroclinic intersections.

There exist some neighborhoods of $L_1$ and of $L_2$ where the Hamiltonian of the PCRTBP can be written in a normal form, via some suitable symplectic action-angle and hyperbolic coordinates $(I,\theta, y,x)$.
In particular, each NHIM can be parametrized in terms of the action-angle coordinates $(I,\theta)$.
Therefore, the scattering map can also be described in terms of these coordinates. In the unperturbed case, the scattering map is particularly simple:   it is a shift in the angle coordinate (a phase shift).

The fact that we use normal form coordinates to express the Hamiltonian, and we subsequently estimate the scattering map
in terms of the action-angle coordinates, is a matter of practical convenience. Normal forms are often used to compute  numerically, with high precision,   the periodic orbits and the  NHIM's around the equilibrium points, as well as the corresponding stable and unstable manifolds, e.g., \cite{jorba1999methodology,gomez2001dynamicsIII}.

For applications, it is important to note that the scattering map for the PCRTBP can be  computed numerically with high precision; see \cite{CanaliasDMR06,delshams2016arnold,capinski2016arnold}.

In this paper we study the effect of a small, non-Hamiltonian, time-dependent perturbation that is added to the system. Provided that the  perturbation is small enough, the NHIM's will persist \cite{Fenichel71}, although periodic orbits inside the NHIM's may disappear.
Also, the transverse homoclinic/heteroclinic orbits,  hence the scattering map, will survive in the perturbed system.

The main contribution is that we compute the effect of the perturbation on the action and angle components of  the scattering map. More precisely, we use Melnikov theory to provide  explicit estimates -- up to first order with respect to the size of the perturbation -- for  the difference between the perturbed scattering map and the unperturbed one, relative to the action and angle coordinates. The resulting expressions are given  in terms of convergent improper integrals of the perturbation evaluated along segments of homoclinic/heteroclinic orbits of the unperturbed system.
One important aspect in the computation is that, in the perturbed system, the action is a slow variable, while the angle is a fast variable.

We stress that, unlike the usual treatments of Melnikov theory, when one studies orbits homoclinic
to hyperbolic fixed points, periodic or quasi-periodic orbits, here we study orbits homoclinic
to NHIM's. The asymptotic  dynamics inside the NHIM's may change under the perturbation.

The effect of the perturbation on the action-angle components of the scattering map can be interpreted, in the context of astrodynamics,  as follows. The difference in the action coordinates between the perturbed and the unperturbed scattering map
can be interpreted as the change in energy due to the maneuver, or equivalently, the change in the `size' of the periodic orbit which the homoclinic/heteroclinic orbit is asymptotic to. The difference in the angle coordinates between the perturbed and the unperturbed scattering map can be interpreted as the change in  asymptotic phase due to the maneuver.

We mention here that there are numerous works  on using hyperbolic invariant manifolds to design low-energy space mission, see, for example  \cite{koon2000heteroclinic,gomez2001dynamics,belbruno2004capture,belbruno2010weak,parker2014low}, and the references listed there.
We hope that our results can be used to optimize the thrust that needs to be applied in order to maneuver between hyperbolic invariant manifolds on different energy levels.

\subsection{Related works}
The study of Hamiltonian systems subject to non-conservative perturbations is of practical interest in physical models, such as in celestial mechanics, where dissipation leads to migration of  satellites  and spacecrafts \cite{milani1987non,gkolias2017hamiltonian,de2019global,calleja2020kam}.

Computation of the scattering map, similar to the one in this paper, have been done in the case of the pendulum-rotator model  subject to Hamiltonian perturbations, e.g., \cite{DelshamsLS08a,gidea2018global}. The rotator-pendulum model is a product system and is  naturally endowed with action-angle and hyperbolic coordinates.  It has two conserved  quantities: the action of the rotator and the total energy of the pendulum.
The effect of the perturbation on the action component of the scattering map is relatively easy to compute directly. On the other hand, the effect on the angle component of the scattering map is more complicated to compute, since this is a fast variable. To circumvent this difficulty, the papers \cite{DelshamsLS08a,gidea2018global} use  the symplecticity of  the scattering map to estimate indirectly the effect of the perturbation on the angle component of the scattering map.

The perturbed scattering map has been computed  in the case of the pendulum-rotator model subject to non-conservative  perturbations  in\break\cite{gidea2021global}. Since the perturbations are not Hamiltonian, the symplectic  argument used in \cite{DelshamsLS08a} can no longer be applied. Therefore, to determine the effect  of the perturbation on the angle component of the scattering map, a direct computation is performed in \cite{gidea2021global}.

The PCRTBP model considered in this paper presents some significant differences from the pendulum-rotator model. First, it has only one conserved quantity, namely the total energy. Second, the PCRTBP is not a product system, and does not carry a globally defined system of action-angle and hyperbolic coordinates. Third, in the unperturbed case  the stable and unstable manifolds associated to a NHIM do not coincide.
For these reasons, we construct locally defined  systems of
action-angle and hyperbolic coordinates along  the unstable manifold as well as along  the stable manifold, respectively. At the intersection of the unstable and stable manifolds, the two coordinate systems do not agree in general. So the computation of the perturbed scattering map  has to take into account  the `mismatch' between these coordinate systems. The dynamics in these coordinate systems fails to be of product type, as there is a coupling between the action-angle and the hyperbolic coordinates, which also needs to be taken into account in the computation.
All of these features make the computation of the perturbed scattering map  for the planar circular restricted three-body problem more intricate than for the rotator-pendulum system. Some of the calculations are simplified taking advantage that some of the variables in the unperturbed system are slow variables, but
we can deal with perturbation theory for fast variables by observing that,
near the NHIM's, the difference between the variables and  their
asymptotic values is slow (a technique already used in  \cite{gidea2021global}.

\subsection{Structure of the paper}
The main result, Theorem \ref{thm:main}, is given in Section \ref{section:setup}. It
provides an expansion of the perturbed scattering map in terms of the unperturbed scattering map,  where the first order term in the expansion is given explicitly in Section \ref{section:proofs}, in Proposition \ref{prop:PCRTBP_change_in_I} and Proposition \ref{prop:PCRTBP_change_in_theta}. Section \ref{sec:pcrtbp} describes how to verify the  hypotheses of Theorem \ref{thm:main} in the context of the PCRTBP. Section \ref{section:coordinates} defines some suitable coordinate systems, which we use to describe the geometric objects of interest.  The proof of the main result is given in Section \ref{section:coordinates}.

\section{Set-up and main result}
\label{section:setup}
Consider a (real analytic) $\mathcal{C}^\omega$, symplectic manifold  $(M,\Omega)$ of dimension $(2m)$.
Each point $z\in M$ is described via a system of local coordinates $z=z(p,q)\in\R^{2m}$
such that $\Omega$ relative to these coordinates is the standard symplectic form
\begin{equation}\label{eqn:standard_symplectic}
\Omega=dp\wedge dq=\sum_{i=1}^{m}dp_i\wedge dq_i. \end{equation}

On $M$ we consider a non-autonomous system of differential equations
\begin{equation} \label{eqn:generalperturbation}
\frac{d}{dt} z = \X^0(z) + \eps\X^1(z,t;\eps),
\end{equation}
where $\X^0:M\to TM$ is a $\mathcal{C}^\omega$-smooth vector field on $M$, $\X^1:M\times \R\times (-\eps_0,\eps_0)\to TM$ is a time-dependent, parameter dependent $\mathcal{C}^r$-smooth vector field on $M$,  with   $r$ sufficiently large,
and $\epsilon\in\R$ is a `smallness' parameter,  taking values in the  interval $(-\eps_0,\eps_0)$ around $0$.
The  dependence of $\X^1(z,t;\eps)$ on the time $t$ is assumed to be of a general type -- not necessarily periodic or quasi-periodic.

The flow of \eqref{eqn:generalperturbation} will be denoted by $\Phi^t_\eps$.

The assumption that $\X^0$ is analytic is motivated by applications to celestial mechanics.   We will consider the case when $\X^0$ is Hamiltonian vector field for the PCRTBP. We will use this assumption only to be
able to quote several normal form theorems.  We believe it
could be weakened to finite differentiability at the price of providing some
new normal form theorems.

The assumption that $\X^1$ is only $\mathcal{C}^r$ and possibly not Hamiltonian is also motivated by applications, as $\X^1$ can model the thrust applied to a spacecraft for some time. In particular $\X^1$ can have compact support in space and in time. Note that even if the perturbation were analytic, the NHIM's which play a role in our
treatment can only be assumed to be finite differentiable. The regularity is
limited by ratios between the tangential and normal contraction rates,
as well as by the regularity of the perturbation. Since the motion on
the unperturbed manifold is integrable, we have that for $\eps$ small enough,
the tangential rates are close to zero, so that the limitations of
regularity due to the rates become irrelevant.

Below, we will require that the  vector fields $\X^0$, $\X^1$ satisfy additional assumptions.

\subsection{The unperturbed system}\label{section:unperturbed}
We assume that the vector field $\X^0$ represents an autonomous Hamiltonian vector field, that is, $\X^0=J\nabla H_0$ for some $\mathcal{C}^\omega$  Hamiltonian function $H_0:M\to \R$, where $J$ is an almost complex structure compatible with the standard symplectic form given by \eqref{eqn:standard_symplectic}, and the gradient $\nabla:=\nabla_z$ is with respect to the associated Riemannian metric. The Hamilton equation  for the unperturbed system is:
\begin{equation}\label{eqn:unperturbed}
\frac{d}{dt} z=J\nabla H_0(z).
\end{equation}

\subsubsection{Homoclinic connections}

We assume that the Hamiltonian flow associated to $H_0$ satisfies the conditions \textbf{(A-i)}, \textbf{(A-ii)} and \textbf{(A-iii)}, below.
In Section \ref{sec:pcrtbp}, we will see that these conditions can be verified in the planar circular restricted three-body problem.

\begin{itemize}
\item[\textbf{(A-i)}] \emph {There exists an equilibrium point $L$ of saddle-center type, that is, the linearized system $DJ\nabla H_0$ at $L$  has eigenvalues of the type $\pm \lambda, \pm i\omega$, with $\lambda,\omega\neq 0$.}
\end{itemize}

Consequently, by the Lyapunov Center Theorem \cite{moser1958generalization}, there exists a $1$-parameter family of periodic  orbits $\lambda_0(h)$, parametrized by the energy level $H=h$, for $h\in(H(L),h_1)$ with $h_1$ sufficiently small, such that $\lambda_0(h)$  shrinks to $L$ as $h\to H(L)$.
This family of periodic orbits determines a  $2$-dimensional manifold  $\Lambda_0\simeq D\times\mathbb{T}\subseteq M$  which is a   normally hyperbolic invariant manifold  (NHIM)  with boundary for the Hamiltonian flow $\Phi^t_0$ of $H_0$, where $D$ is closed interval with non-empty interior contained in $(H(L),h_1)$. The notion of a NHIM is recalled in Definition \ref{defn:NHIM}, Appendix \ref{sec:NHIM}.

The NHIM $\Lambda_0$ is symplectic when endowed with the form $\Omega_{\mid\Lambda_0}$, where $\Omega$ is given by \eqref{eqn:standard_symplectic}. Moreover, $\Lambda_0$ is foliated by the periodic orbits $\lambda_0(h)$, i.e., $\Lambda_0=\bigcup_{h\in D}\lambda_0(h)$. The flow $\Phi^t_0$ on each $\lambda_0(h)$ is a constant
speed flow. In particular, the dynamics restricted to the NHIM is integrable. Therefore $\Lambda_0$ can be parametrized in terms
of symplectic action-angle variables $(I,\theta)$, so that each periodic orbit
$\lambda_0(h)$ represents a level set $I_h$ of the action variable. The action $I_h$ is uniquely determined by the energy level $H_0=h$.
In fact, as we will see in Section \ref{sec:coordinates}, there exist a  system of action-angle and hyperbolic variables  $(I,\theta,y,x)$ in a neighborhood of $L$ such the Hamiltonian $H_0$ can be written in a normal form.

\begin{itemize}
\item [\textbf{(A-ii)}] \emph{There exists a relatively compact open   set $\mathscr{K}$ in $M$ such that the   unstable manifold $W^\un_{\mathscr{K}}(\Lambda_0)$ and the  stable manifold $W^\st_{\mathscr{K}}(\Lambda_0)$ inside $\mathscr{K}$ intersect transversally along a   \emph{homoclinic channel} $\Gamma_0\subset \mathscr{K}$.}
\end{itemize}

The definition of a homoclinic channel is given in Appendix \ref{section:scattering_review}, Definition \ref{def:homoclinic_channel}.


As a consequence of \textbf{(A-ii)}, there exist transverse homoclinic orbits to $\Lambda_0$.
The unstable and stable manifolds of $\lambda_0(h)$ are contained in the same energy level as $\lambda_0(h)$, i.e.,
$W^\un(\lambda_0(h)), W^\st(\lambda_0(h))\subseteq M_h$, where $M_h=\{z\in M\,|\, H=h\}$.
Hence, each homoclinic orbit to $\Lambda_0$ is asymptotic, in both forward and backwards time, to the periodic orbit $\lambda_0(h)$ corresponding to its  energy level $h$.

In Section \ref{sec:coordinates}, we will see that the normal form coordinates $(I,\theta,y,x)$ can be extended via the flow $\Phi^t_0$ along neighborhoods of $W^\un(\Lambda_0)$ and $W^\st(\Lambda_0)$, yielding two systems of action-angle and hyperbolic variables  $(I^\un,\theta^\un,y^\un,x^\un)$, $(I^\st,\theta^\st,y^\st,x^\st)$, respectively.
Relative to these two systems of coordinates
$W^\un(\Lambda_0)$ can be described  locally  by $y^\un=0$, and
$W^\st(\Lambda_0)$ can be described  locally by  $x^\st=0$.
The two  coordinate systems $(I^\un,\theta^\un,y^\un,x^\un)$ and $(I^\st,\theta^\st,y^\st,x^\st)$ do not agree with one another in a neighborhood of the homoclinic channel $\Gamma_0$.

In addition, we require that $H_0$  satisfies a non-degeneracy condition \textbf{(A-iii)}, which is formulated in terms
of the normal form  from Section \ref{sec:coordinates}.
Condition \textbf{(A-iii)} consists of two parts: \textbf{(A-iii-a)}, and \textbf{(A-iii-b)}.
Condition  \textbf{(A-iii-a)}  will be given in Section \ref{sec:coordinates}, and
condition  \textbf{(A-iii-b)}  will be given in Section \ref{sec:scattering_PCRTBP}.
These are explicit and verifiable conditions that the derivatives of certain functions are non-zero.

\subsubsection{Unperturbed scattering map associated to a homoclinic channel}
Let $\Omega^{-}:W^\un(\Lambda_0)\to\Lambda_0$ be the projection mapping defined by
$\Omega^{-}(z_0)=z^-_0$, where $z_0^-\in\Lambda_0$ is the footpoint of the unstable fiber through $z_0\in W^\un(\Lambda_0)$.
Similarly, let $\Omega^{+}:W^\st(\Lambda_0)\to\Lambda_0$ be the projection mapping defined by
$\Omega^{+}(z_0)=z^+_0$, where $z_0^+\in\Lambda_0$ is the footpoint of the stable fiber through $z_0\in W^\st(\Lambda_0)$.
The stable and unstable fibers are defined in Appendix \ref{sec:NHIM}, equation \eqref{stablemanif-flow}.

Consider the homoclinic channel $\Gamma_0$ from condition \textbf{(A-ii)}.
By the definition of a homoclinic
channel, $\Omega^\pm$  restricted to $\Gamma_0$ is a diffeomorphism onto its image.
To  any  homoclinic channel we can associate a \emph{scattering  map},
which is defined in Appendix \ref{section:scattering_review}, Definition \ref{def:scattering map}.
Specifically, the scattering map $\sigma_0$ associated to $\Gamma_0$ is given by:
\begin{equation}\begin{split}
&\sigma_0:\Omega^-(\Gamma_0)\subseteq \Lambda_0\to \Omega^+(\Gamma_0)\subseteq \Lambda_0,\\
&\sigma_0(z_0^-)=z_0^+,
\end{split}\end{equation}
provided that there exists a homoclinic point $z_0\in \Gamma_0$ such that $\Omega^{-}(z_0)=z^-_0$ and $\Omega^{+}(z_0)=z^+_0$.
For more details on the scattering map, see Appendix \ref{section:scattering_review}.

The energy preservations along the stable and unstable manifolds of periodic orbits implies that $\sigma_0$
leaves each periodic orbit $\lambda_0(h)$ invariant, that is,  $\sigma_0(\lambda_0(h))\subseteq \lambda_0(h)$.

Fixing a point $z_0\in \Gamma$, we have that $\sigma_0(z_0^-)=z_0^+=z_0^-+\Delta$, for some  $\Delta$ depending on $z_0$. The invariance property of the scattering map \eqref{eqn:invariant}, and the fact that $\Phi_0^t$ restricted to
$\lambda_0(h)$ is linear implies that
 \[\sigma _0(\Phi^t_0(z_0^-))=\Phi^t(z_0^+)=\Phi^t(z_0^-+\Delta)=\Phi^t(z_0^-)+\Delta,\]
for all $t$ for which $\Phi^t_0(z_0)$ remains in $\Gamma_0$.
This implies that, in terms of the action-angle coordinates $(I,\theta)$, $\sigma_0$ is given by a shift in the angle
\[\sigma_0(I,\theta)=(I,\theta+\Delta(I)),\]
for some function $\Delta $ that depends differentiably on $I$. We stress that $\Delta$ also depends on the choice of the homoclinic channel  $\Gamma_0$.


\subsection{The perturbation.}
The vector field  $\X^1$  is a time-dependent, parameter-dependent vector field on $M$.
\begin{itemize}
\item [\textbf{(A-iv)}] \emph{We assume that $\X^1=\X^1(z,t;\eps)$ is
uniformly $\mathcal{C}^r$-differentiable in all variables on $\mathscr{K}\times\R\times (-\eps_0,\eps_0)$, where the set $\mathscr{K}$ is as in the condition \textbf{(A-ii)}.}
\end{itemize}

Above, we assume that $r$ is suitably large.
We will not assume that $\X^1$ is Hamiltonian.
Thus, our setting can be used to model dissipation or forcing applied to a Hamiltonian system.
Note that non-Hamiltonian perturbations are very singular, in the sense that periodic and homoclinic orbits may disappear. On the other hand, the NHIM's
and their stable and unstable manifolds persist and can be used as the basis for
perturbative calculations.

As a particular case, we will also write our results for the case when the perturbation $\X^1$  in \eqref{eqn:generalperturbation} is Hamiltonian, that is, it is given by
\begin{equation}\label{eqn:h}
   \X^1(z,t;\eps)=J\nabla_z H_1(z,t;\eps),
\end{equation}
where $H_1$ is a time-dependent, parameter-dependent $\mathcal{C}^r$-smooth Hamiltonian function on $M$.

When the perturbation is added to the system, as we will see in Section \ref{sec:perturbed_NHIM},
the NHIM  for the unperturbed system can be continued to a NHIM  for the perturbed system, and
the transverse homoclinic/heteroclinic orbits for the unperturbed system can be continued  to
transverse homoclinic/heteroclinic orbits for the perturbed system,
provided that the perturbation is sufficiently small.
Hence there exists an associated scattering map for the perturbed system.

The goal is to quantify the effect of the perturbation  on the corresponding scattering map.

\subsection{Extended system}\label{sec:extended}
We consider the system \eqref{eqn:generalperturbation}
under the  conditions \textbf{(A-i)}, \textbf{(A-ii)},  and \textbf{(A-iii)}.
We associate to it the extended system
\begin{equation}\label{eqn:generalperturbation_t}
\begin{split}
& \frac{d}{d\tau} z= \X^0(z) + \eps\X^1(z,t;\eps), \\
& \frac{d}{d\tau} t  = 1,\\
\end{split}
\end{equation}
which is defined on the extended phase space $\tM=M\times \R$. We denote $\tz=(z,t)\in \tM$.
The independent variable will be denoted by $\tau$ from now on.
We will denote by $\tPhi^\tau_\eps$ the extended flow of \eqref{eqn:generalperturbation_t}. We have
\[ \tPhi^\tau_\eps(z,t)=( {\Phi}^\tau_\eps(z), t+\tau).\]

In the extended phase space we have the following:

\[\tLambda_0=\Lambda_0\times\R\]
is a  NHIM with boundary for the extended unperturbed flow $\tPhi^\tau_0$.

\[\tilde{\Gamma}_0=\Gamma_0\times \R\]
is  a homoclinic  channel for $\tPhi^\tau_0$.

The  scattering map associated to $\tilde{\Gamma}_0$ is given by
\[\tilde{\sigma}_0(I,\theta,t)=(I,\theta+\Delta(I),t).\]

\subsection{Main result}
\begin{thm}\label{thm:main}
\label{eqn:main}
Consider the system \eqref{eqn:generalperturbation}.

Assume that the unperturbed system  $\X^0$  satisfies  the conditions \textbf{(A-i)}, \textbf{(A-ii)} and \textbf{(A-iii)}, and that the perturbation $\X^1$  satisfies the condition  \textbf{(A-iv)}.

Then, there exists $\eps_1$, with $0<\eps_1<\eps_0$, such that the following hold true:
\begin{itemize}
\item[(i)]  For all $\eps\in(-\eps_1,\eps_1)$,
there is a  $\mathcal{C}^{\ell}$-family of NHIMs $\tilde\Lambda_\eps$
for the extended  flow
$\tilde{\Phi}^t_\eps$,
for some $\ell\geq 1$, which form a continuation of the NHIM $\tilde\Lambda_0$
of $\tilde{\Phi}^t_0$;
\item [(ii)] For all $\eps\in(-\eps_1,\eps_1)$, the unstable and stable manifolds  of $\tilde\Lambda_\eps$, $W^\un(\tilde\Lambda_\eps)$ and $W^\st(\tilde\Lambda_\eps)$, respectively,  intersect transversally,  in the extended  phase space $\tM$, along a homoclinic channel $\tilde\Gamma_\eps$;
\item[(iii)] The perturbed scattering map $\tilde{\sigma}_\eps$ associated to $\tilde\Gamma_\eps$ can be written as
\begin{equation}\label{eqn:main_sigma}
\tilde{\sigma}_\eps(I,\theta,t)=\tilde{\sigma}_0(I,\theta,t)+\eps \tilde{\mathcal{S}}(I,\theta,t)+O_{\mathcal{C}^{\ell}}(\eps^{2}),
\end{equation}
where $\tilde{\mathcal{S}}=(\tilde{\mathcal{S}}^I,\tilde{\mathcal{S}}^\theta,\tilde{\mathcal{S}}^t)$ is a mapping from some domain in $D\times \T\times\R$ to its image in $\R\times \T\times\R$  as follows:  \begin{itemize}
\item [(iii-a)] the components $\tilde{\mathcal{S}}^I$ and $\tilde{\mathcal{S}}^\theta$
are given by
\eqref{eqn:PCRTBP_I_plus_minus} and \eqref{eqn:PCRTBP_change_in_theta}, respectively, and
\item [(iii-b)] the component $\tilde{\mathcal{S}}^t$ is given by $\tilde{\mathcal{S}}^t(I,\theta,t)=t$.
\end{itemize}
\end{itemize}
\end{thm}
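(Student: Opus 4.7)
My plan is to establish parts (i) and (ii) by invoking the classical persistence theory for normally hyperbolic invariant manifolds with boundary, applied to the extended flow $\tilde{\Phi}^\tau_\eps$ on $\tM=M\times\R$. Since the perturbation $\X^1$ is uniformly $\mathcal{C}^r$ on $\mathscr{K}\times\R\times(-\eps_0,\eps_0)$, the extended vector field is a uniformly small $\mathcal{C}^r$ perturbation of the unperturbed extended vector field. Working with the compact core of $\tilde{\Lambda}_0=\Lambda_0\times\R$ (which is translation-invariant in the $t$-direction) and inflating it slightly to absorb the boundary, Fenichel's theorem yields a $\mathcal{C}^\ell$-family of NHIMs $\tilde{\Lambda}_\eps$ together with their local stable and unstable manifolds $W^{\un,\st}(\tilde{\Lambda}_\eps)$, all $\mathcal{C}^\ell$-close to their unperturbed counterparts for $|\eps|<\eps_1$. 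Because transversality is an open condition on a compact piece, and $\tilde{\Gamma}_0=\Gamma_0\times\R$ is invariant under time translation, the transverse intersection along $\tilde{\Gamma}_0$ persists as a transverse intersection along a perturbed homoclinic channel $\tilde{\Gamma}_\eps$, $\mathcal{C}^\ell$-close to $\tilde{\Gamma}_0$. This gives (i) and (ii), and also ensures that the wave operators $\Omega^\pm_\eps\colon W^{\un,\st}(\tilde{\Lambda}_\eps)\to\tilde{\Lambda}_\eps$, and hence the scattering map $\tilde{\sigma}_\eps=\Omega^+_\eps\circ(\Omega^-_\eps)^{-1}$ on $\Omega^-_\eps(\tilde{\Gamma}_\eps)$, depend smoothly on $\eps$.

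For (iii), I would fix a homoclinic point $\tilde{z}_0\in\tilde{\Gamma}_0$ and its smooth continuation $\tilde{z}_\eps\in\tilde{\Gamma}_\eps$, and compute at first order the footpoints $\tilde{z}^\pm_\eps=\Omega^\pm_\eps(\tilde{z}_\eps)\in\tilde{\Lambda}_\eps$ written in the action-angle variables on $\tilde{\Lambda}_\eps$. Writing
\[\tilde{z}^\pm_\eps=\tilde{z}^\pm_0+\eps\,\delta\tilde{z}^\pm+O(\eps^2),\]
the first-order correction of $\tilde{\sigma}_\eps$ is determined by $\delta\tilde{z}^+-D\tilde{\sigma}_0\cdot\delta\tilde{z}^-$. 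The quantities $\delta\tilde{z}^\pm$ will be computed by the usual variation-of-constants argument applied along the unperturbed trajectory through $\tilde{z}_0$, using the local coordinates $(I^\un,\theta^\un,y^\un,x^\un)$ on the $\un$-side and $(I^\st,\theta^\st,y^\st,x^\st)$ on the $\st$-side introduced in Section \ref{section:coordinates}, and carried out in detail in Section \ref{section:proofs}, Propositions \ref{prop:PCRTBP_change_in_I} and \ref{prop:PCRTBP_change_in_theta}. The convergence of the resulting improper integrals is the crucial analytic point: for the action $I$, which is a slow variable of the unperturbed system ($\dot{I}=0$), integrability of the $I$-component of $\X^1$ as $\tau\to\mp\infty$ follows from the exponential approach of the unperturbed homoclinic orbit to $\tilde{\Lambda}_0$ combined with the continuous dependence of $\X^1$ on its arguments. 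For the angle $\theta$, which is a \emph{fast} variable ($\dot{\theta}=\mathrm{const}\neq 0$), one cannot simply integrate the $\theta$-component of $\X^1$ along the orbit. Following the idea of \cite{gidea2021global}, I would subtract off the asymptotic angle and work with $\theta^{\un,\st}-\theta^{\un,\st}_\infty$, which is a slow variable near the NHIM since both pieces rotate at the same unperturbed rate; this produces an absolutely convergent Melnikov integral for $\delta\theta^\pm$.

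The main obstacle will be the coordinate mismatch at $\tilde{\Gamma}_0$: the two charts $(I^{\un},\theta^{\un},y^{\un},x^{\un})$ and $(I^{\st},\theta^{\st},y^{\st},x^{\st})$ do not coincide on a neighborhood of $\tilde{\Gamma}_0$, and neither of them is of product type since the hyperbolic variables couple to the action-angle ones. To combine $\delta\tilde{z}^-$ and $\delta\tilde{z}^+$ into a formula for $\tilde{\sigma}_\eps-\tilde{\sigma}_0$ I must carefully express both increments in a single action-angle frame on $\tilde{\Lambda}_\eps$, which requires differentiating the transition map between the two charts at $\tilde{z}_0$ and using the normal form condition \textbf{(A-iii)} to ensure non-degeneracy of this straightening step. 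Finally, part (iii-b) is immediate: since the extended system has $\dot{t}=1$ and the scattering map commutes with the extended flow, the $t$-component of $\tilde{\sigma}_\eps$ is the identity for every $\eps$, so $\tilde{\mathcal{S}}^t(I,\theta,t)=t$.
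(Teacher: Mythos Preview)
Your plan is correct and follows essentially the same architecture as the paper: Fenichel persistence for (i), stability of transversality for (ii), and Melnikov-type improper integrals along the unperturbed homoclinic orbit for (iii), with the fast-variable issue for $\theta$ handled by passing to differences that are slow near the NHIM. Two tactical differences are worth noting. First, for the action component the paper does not integrate the $I$-component of $\X^1$ directly; instead it computes the change in the globally defined first integral $H_0$ (using $(\X^0+\eps\X^1)H_0=\eps\X^1H_0$) via the master lemmas, and then converts $H_0(\tilde z^+_\eps)-H_0(\tilde z^-_\eps)$ into $I(\tilde z^+_\eps)-I(\tilde z^-_\eps)$ through the normal form and the non-degeneracy $\partial h_0/\partial I\neq 0$. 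This yields the coordinate-free integrand $\X^1H_0$ in \eqref{eqn:PCRTBP_I_plus_minus}. Second, the coordinate mismatch at $\tilde\Gamma_\eps$ is not handled by differentiating the transition map as you suggest; rather the paper uses condition \textbf{(A-iii-b)} to \emph{select} the perturbed homoclinic point $\tilde z_\eps$ on the transversal surface $\mathscr{D}_{I_0}$ so that $\theta^\st(\tilde z_\eps)-\theta^\un(\tilde z_\eps)=\Delta(I_0)$ and $I^\st(\tilde z_\eps)=I^\un(\tilde z_\eps)$, which makes the mismatch term appear simply as the unperturbed phase shift $\Delta(I_0)$ in \eqref{eqn:PCRTBP_change_in_theta}. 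The angle computation then proceeds by expanding $\partial H_0/\partial I^\st$ via the normal form, feeding in the already-computed $I$-difference (Corollary~\ref{cor:cor}), and reducing the resulting double integral to a single $\tau$-weighted integral by integration by parts; this last step, rather than a direct ``subtract the asymptotic angle'' argument, is what produces the $\tau$-weighted integrands in \eqref{eqn:PCRTBP_change_in_theta}.
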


We recall the notation $O_{\mathcal{C}^k}(\cdot )$ used above:  $f=O_{\mathcal{C}^k}(g)$ means that $\|f\|_{\mathcal{C}^k}\leq M\|g\|_{\mathcal{C}^{k}}$ for some  $M>0$, where $k\geq 0$,  and $\|\cdot\|_{\mathcal{C}^k}$ is the ${\mathcal{C}^k}$-norm.
In the sequel, to simplify the notation we will write $O(\cdot )$ without the subscript indicating the function space topology, whenever this can be inferred from the context.

\subsection{Heteroclinic connections}\label{sec:heteroclinic}
Instead of the conditions \textbf{(A-i)}, \textbf{(A-ii)}, \textbf{(A-iii)}  we  assume that the Hamiltonian flow associated to $H_0$ satisfies the conditions  \textbf{(A'-i)}, \textbf{(A'-ii)}, \textbf{(A'-iii)}
from below.

Condition  \textbf{(A'-i)} has two parts \textbf{(A'-i-a)} and \textbf{(A'-i-b)}.

\begin{itemize}
\item[\textbf{(A'-i-a)}] \emph {There exists two equilibrium points $L^1$, $L^2$ of saddle-center type.}

\end{itemize}
We do not assume that the two equilibrium points are on the same energy level, that is $H(L^1)\neq H(L^2)$ in general.
Consequently, for each equilibrium point $L^1$, $L^2$ there exists a $1$-parameter family of closed orbits $\lambda^1_0(h)$, for $h\in D^1$, and  $\lambda^2_0(h)$ for $h\in D^2$, where $D^1$, $D^2$ are some closed intervals contained in some neighborhoods of $H(L^1)$, $H(L^2)$, respectively.

\begin{itemize}
\item[\textbf{(A'-i-b)}] \emph {There exists an interval of energies $D\subseteq D^1\cap D^2$, with non-empty interior, such that there are periodic orbits  $\lambda^1_0(h)$, $\lambda^2_0(h)$  for all $h\in D$. Moreover, there exist normal form coordinates $(I^1,\theta^1,y^1,x^1)$ and $(I^2,\theta^2,y^2,x^2)$ defined around $\lambda^1_0(h)$, $\lambda^2_0(h)$, respectively,  for all $h\in D$. These normal form coordinates are as in Section
    \ref{sec:coordinates}.}
\end{itemize}

This condition implies that each family of periodic orbits determines a  $2$-dimensional NHIM  $\Lambda^1_0=\bigcup_{h\in D}\lambda_0(h)$, $\Lambda^2_0=\bigcup_{h\in D}\lambda^2_0(h)$ in  $M$. Moreover, there are neighborhood of $\Lambda^1_0$ and $\Lambda^2_0$ where  $H_0$ can be written in a normal form as in  Section
    \ref{sec:coordinates}.

\begin{itemize}
\item [\textbf{(A'-ii)}] \emph{There exists a relatively compact open   set $\mathscr{K}$ in $M$ such that the   unstable manifold $W^\un_{\mathscr{K}}(\Lambda^1_0)$ and the  stable manifold $W^\st_{\mathscr{K}}(\Lambda^2_0)$ inside $\mathscr{K}$ intersect transversally along a   \emph{heteroclinic channel} $\Gamma_0\subset \mathscr{K}$.}
\end{itemize}

The definition of a heteroclinic channel is given in Definition \ref{def:heteroclinic_channel}, Appendix \ref{section:scattering_review}.
As a consequence, there exist transverse heteroclinic orbits from $\Lambda^1_0$ to $\Lambda^2_0$.
Each such heteroclinic orbit is asymptotic in backwards time  to a periodic orbit $\lambda^1_0(h)$, and is asymptotic in forward time to a periodic orbit $\lambda^2_0(h)$.

As in the case of homoclinic connections, we require some non-degeneracy condition \textbf{(A'-iii)}, formulated in terms of normal forms, which is the analogue of condition \textbf{(A-iii)} in Section \ref{section:setup}.
For the sake of brevity, we will not formulate this condition explicitly.


\subsubsection{Unperturbed scattering map associated to a heteroclinic channel}

As before, we define $\Omega^{-,1}:W^\un(\Lambda^1_0)\to\Lambda^1_0$ by
$\Omega^{-,1}(z_0)=z^-_0$, where $z_0^-\in\Lambda^1_0$ is the footpoint of the unstable fiber through $z_0\in W^\un(\Lambda^1_0)$, and
 $\Omega^{+,2}:W^\st(\Lambda^2_0)\to\Lambda^2_0$  by
$\Omega^{+,2}(z_0)=z^+_0$, where $z_0^+\in\Lambda^2_0$ is the footpoint of the stable fiber through $z_0 \in W^\st(\Lambda^2_0)$.

Associated to the heteroclinic channel $\Gamma_0$   we can define the scattering  map as in Definition \ref{def:scattering map_heteroclinic} in Appendix \ref{section:scattering_review}.
Specifically,
\[\sigma_0:\Omega^{-,1}(\Gamma_0)\subseteq \Lambda^1_0\to \Omega^{+,2}(\Gamma_0)\subseteq \Lambda^2_0,\]
is given by
\[\sigma_0(z_0^-)=z_0^+,\]
provided that there exists a $z_0\in \Gamma_0$ such that $\Omega^{-,1}(z_0)=z^-_0$ and $\Omega^{+,2}(z_0)=z^+_0$.

Since $H_0$ is constant along heteroclinic orbits, we have that $I(z^-_0)=I(z^+_0)$.
Then the scattering map, expressed in terms of the action-angle coordinates, is given by
\[\sigma_0(I,\theta)=(I,\theta+\Delta(I)),\]
for some function $\Delta$ that depends smoothly on $I$.

In this case, we can obtain a result similar to Theorem \ref{thm:main}. For brevity, we will not provide the precise formulas for the components of the corresponding expansion of the perturbed scattering map,  which is analogous to \eqref{eqn:main_sigma}.
Such formulas are analogues of  \eqref{eqn:PCRTBP_I_plus_minus} and \eqref{eqn:PCRTBP_change_in_theta}.

\section{Geometric structures in the planar circular restricted three-body problem.}
\label{sec:pcrtbp}
In this section we survey the
status of the verification of the conditions \textbf{(A-i)}, \textbf{(A-ii)}, \textbf{(A-iii)} from Section \ref{section:unperturbed}, or the conditions  \textbf{(A'-i)}, \textbf{(A'-ii)}, \textbf{(A'-iii)} from Section \ref{sec:heteroclinic}, in the concrete model of the PCRTBP.  Some of the verifications
in the literature are rigorous and some of them are numerical.

Of course, the verification of the hypothesis of Theorem~\ref{thm:main} in a
concrete model  does not affect the validity of the rigorous arguments
establishing Theorem~\ref{thm:main},
and the reader  interested only in rigorous results may safely skip
this section.

We note that, since our hypothesis are mainly transversality
conditions, they can be verified with finite precision
calculations, which seem to be safe calculations for today's standard
and could well be accessible to \emph{``computer assisted proofs''}.
We hope that this work could stimulate more extensive verifications.

The PCRTBP is a model describing
the motion of an infinitesimal body under the Newtonian gravity exerted by two heavy bodies   moving  on circular orbits about their center of mass,  under the assumption that these orbits are not affected by the gravity of the infinitesimal body.

We can think of the heavy bodies (referred to as primaries) representing the Earth and Moon, and the infinitesimal mass representing  a spaceship.

It is convenient to study the motion of the infinitesimal body relative to a co-rotating  frame which rotates with the primaries around the center of mass, and to  use normalized units. Henceforth, the masses of the heavy bodies are  $1-\mu$ and $\mu$, where $\mu\in(0,1/2]$.
Relative to the co-rotating frame, the heavier mass $1-\mu$ is located at $(\mu,0)$, and the lighter mass $\mu$ is located at $(-1+\mu,0)$.
The motion of the infinitesimal body relative to the co-rotating frame is described via the autonomous Hamiltonian
\begin{equation}
H_0(p_1,p_2,q_1,q_2)=\frac{(p_{1}+q_{2})^{2}+(p_{2}-q_{1})^{2}}{2}-V(q_1,q_2),
\label{eqn:PRCTBP}
\end{equation}
where $\left( p,q\right) =\left(p_{1},p_{2},q_{1},q_{2}\right)\in\R^4$ represents the  momenta and  the
coordinates  of the infinitesimal body with respect to the co-rotating frame,
\begin{equation}\begin{split}
V(q_1,q_2) & =\frac{q_1^2+q_{2}^2}{2}+\frac{1-\mu}{r_{1}}+\frac{\mu}{r_{2}},\\
r_{1} & =\left((q_1-\mu)^{2}+q_2^{2}\right)^{1/2},\\
r_{2} &=\left((q_1+1-\mu)^{2}+q_2^{2}\right)^{1/2}.
\end{split}
\end{equation}

Above $V(q_1,q_2)$ represents the effective potential,
and $r_{1}(t)$, $r_{2}(t)$ represent the distances from the infinitesimal body to the masses $1-\mu$ and $\mu$,
respectively. The phase space \[M=\{(p,q)\in\R^4\,|\,q\neq (\mu,0),\textrm{ and } q\neq (-1+\mu,0)\}\] is endowed with the symplectic form \[\Omega=dp_1\wedge dq_1+dp_2\wedge dq_2.\] Note that the phase space $M$ is not compact.

The equations of motion of the infinitesimal body are given by the Hamilton equations corresponding to \eqref{eqn:PRCTBP}, that is
\begin{equation}\label{eqn:PRCTBP_equations}
\frac{d}{dt}{z}=J\nabla H_0(z),
\end{equation}
where $z=z(p_1,p_2,q_1,q_2)$ and $J$ represents the standard almost complex structure.

The Hamiltonian $H_0$ is an integral of motion, so the flow $\Phi^t_0$ of \eqref{eqn:PRCTBP_equations} leaves invariant each energy hyper-surface
\begin{equation}\label{eqn:energy}
M_h:=H_0^{-1}(h)=\{(p_1,p_2,q_1,q_2)\in M\,|\,H_0(p_1,p_2,q_1,q_2)=h\}.
\end{equation}

The system has three equilibrium points $L_{1},L_{2},L_{3}$  located on the $q_1$-axis,
and two other equilibrium points $L_4$, $L_5$, each lying in the $(q_1,q_2)$-plane and
forming an equilateral triangle with the primaries. These are known as the  Lagrange points.
In our convention, $L_1$ is located between the primaries, $L_2$ is on the side of the lighter primary, and $L_3$ is  on the side of the heavier primary.
The linearized stability of $L_{1},L_{2},L_{3}$ is of center-saddle type.
The linearized stability of $L_{4}$, $L_{5}$ is of center-center type, for $\mu$ less than some critical value $\mu_{\textrm{cr}}$.

We note that condition \textbf{(A-i)} is satisfied for each of the equilibrium points $L_{1},L_{2},L_{3}$.

For $i=1,2,3$, for each energy level $h$, with $H(L_i)<h$ and $h$ sufficiently close to $H(L_{i})$, there exists a unique periodic orbit $\lambda_0(h)$ near the equilibrium point
$L_{i}$, which is referred to as a Lyapunov orbit. The existence of such periodic orbits
follows from the Lyapunov Center Theorem (see, e.g., \cite{moser1958generalization}).
Moreover, there exists a neighborhood of $L_{i}$ in the phase space where the Hamiltonian  $H_0$ can be written in a normal form relative to some suitable coordinates $(I,\theta,y,x)$; see Section \ref{sec:coordinates}.

Each Lyapunov orbit is hyperbolic in the energy surface, so it has associated $2$-dimensional unstable and stable manifolds denoted $W^\un(\lambda_0(h))$ and $W^\st(\lambda_0(h))$.

Numerical evidence shows that these periodic orbits can be continued for energy levels $h>H(L_i)$ that are not necessarily close to $H(L_i)$ (see, e.g., \cite{broucke1968periodic}).

\textsl{Normally hyperbolic invariant manifold for the unperturbed system.}
For an energy range $h\in D$ sufficiently close to the energy of $L_i$, the family of Lyapunov orbits
\begin{equation}\label{eqn:PCRTBP_NHIM}
\Lambda_0=\bigcup_{h\in D} \lambda_0(h),
\end{equation}
defines a $2$-dimensional NHIM with boundary for the Hamiltonian flow  of \eqref{eqn:PRCTBP_equations}. The NHIM carries the symplectic structure  $\Omega_{\mid\Lambda_0}$, and it can be described in terms of the action-angle coordinates $(I,\theta)$. The action $I=I_h$ is uniquely defined by the  energy $h$, and  $\theta$ is symplectically conjugate with $I$ with respect to   $\Omega_{\mid\Lambda_0}$.
The variable  $I$ is a first integral along the trajectories of the flow on $\Lambda_0$, and the action level sets are in fact the Lyapunov orbits $\lambda_0(h)$. The motion restricted to each Lyapunov orbit is a rigid rotation in the variable $\theta$, with the frequency depending on the energy level.

The NHIM $\Lambda_0$ and its unstable and stable manifolds $W^\un(\Lambda_0)$ and  $W^\st(\Lambda_0)$ have simple descriptions in terms of the normal form coordinates $(I,\theta,y,x)$ in a neighborhood of $L_1$:
$\Lambda_0$ corresponds to $x=y=0$, $W^\un(\Lambda_0)$ corresponds to $y=0$, and $W^\st(\Lambda_0)$ corresponds to $x=0$.

\textsl{Homoclinic connections.}
We first focus on the dynamics around the equilibrium point $L_1$.
There are analytic arguments (see \cite{Llibre_Martinez_Simo_1985}) showing that, for a discrete set of values of $\mu$ that are sufficiently small, and for each $h$ sufficiently close to $H(L_1)$, the branches of $W^\un(\lambda_0(h))$ and $W^\st(\lambda_0(h))$ on the side of the heavier primary do not collide with the primary and intersect  transversally along some homoclinic orbit $\gamma_0(h)$, not necessarily unique.
Numerical evidence shows that, for a large range of values of masses $\mu$  and energies  $h$,
the branches of $W^\un(\lambda_0(h))$ and $W^\st(\lambda_0(h))$ on the side of each primary do not collide with either primary and intersect transversally; see, e.g. \cite{koon2000heteroclinic,capinski2012computer}.

Each choice of a transverse homoclinic orbit $\gamma_0(h)$ can be continued in energy $h$ to a family of such homoclinic orbits, which forms a homoclinic manifold $\bigcup_{h\in D} \gamma_0(h)$. Moreover, we can ensure that
$W^\un(\lambda_0(h))$ and $W^\st(\lambda_0(h))$ are contained inside  some compact subset $\mathscr{K}$ of the phase space.
One can always restrict to  a submanifold
\begin{equation}\label{eqn:PCRTBP_homoc}
\Gamma_0\subseteq \bigcup_{h\in D} \gamma_0(h),
\end{equation}
that is a homoclinic channel.

In this way, we can ensure  condition \textbf{(A-ii)}.

The remaining condition is  \textbf{(A-iii)}, which consists of  \textbf{(A-iii-a)} and \textbf{(A-iii-b)}.
These  are explicit non-degeneracy conditions that can separately be verified numerically.

In this way, for the PCRTBP, we can verify the existence of the geometric structures of interest and the corresponding conditions \textbf{(A-i)}, \textbf{(A-ii)}, \textbf{(A-iii)} from  Section \ref{section:unperturbed}.

\textsl{Heteroclinic connections.}
We now focus on the dynamics around the equilibrium points $L_1$ and $L_2$. They satisfy condition  \textbf{(A'-i)}.
For energy levels $h$ with $H(L_1) \lesssim h$, there exists a family $\lambda^1_0(h)$ of Lyapunov orbits  near $L_1$, and  for  $H(L_2) \lesssim h$ there exists  a family $\lambda^2_0(h)$ of Lyapunov orbits  near $L_2$. Moreover, there exist normal form coordinates $(I^1,\theta^1,y^1,x^1)$ and $(I^2,\theta^2,y^2,x^2)$ defined around $L_1$ and $L_2$, respectively,  for some suitable energy ranges. These normal form coordinates are as in Section \ref{sec:coordinates}.

Numerical evidence shows that   families of periodic orbits near $L_1$ and $L_2$ can exist simultaneously, for some energy range.
Therefore, we consider an interval of energies $D$ such that, for $h\in D$ we have the following: there exists families of periodic orbits $\lambda^1_0(h)$ near $L_1$, and   $\lambda^2_0(h)$ near $L_2$, the following sets
\begin{equation}\begin{split}\label{eqn:PCRTBP_NHIM_2}
\Lambda^1_0&=\bigcup_{h\in D} \lambda^1_0(h),\\
\Lambda^2_0&=\bigcup_{h\in D} \lambda^2_0(h),
\end{split} \end{equation}
are NHIM's  for the Hamiltonian flow  of \eqref{eqn:PRCTBP_equations},
and the normal form coordinates $(I^1,\theta^1,y^1,x^1)$ and $(I^2,\theta^2,y^2,x^2)$ are defined in
some neighborhoods of $\Lambda^1_0$ and $\Lambda^2_0$, respectively.

Numerical evidence also shows that
there  exist transverse heteroclinic connections determined by $W^{\un}(\lambda^1_0(h))\cap W^{\st}(\lambda^2_0(h))$ and  $W^{\un}(\lambda^2_0(h))\cap W^{\st}(\lambda^1_0(h))$ for certain ranges of energies.
See, e.g.,  \cite{koon2000heteroclinic,wilczak2003heteroclinic,canalias2006homoclinic,belbruno2010weak}.
In either case, we denote the corresponding family of heteroclinic  orbits by $\gamma_0(h)$.
We consider a  range of energies $h\in D$ for which  this additional condition on the existence of transverse heteroclinic connections is satisfied.

The transverse intersection of the unstable manifold  $W^\un(\Lambda^1_0)$   with the stable manifold $W^\st(\Lambda^2_0)$ define a heteroclinic manifold $\bigcup_{h\in D} \gamma_0(h)$,
which depends on the choice of the family of heteroclinic orbits $\gamma_0(h)$.
One can always restrict to  a submanifold
\begin{equation}\label{eqn:PCRTBP_heteroc_1}
\Gamma_0\subseteq \bigcup_{h\in D} \gamma_0(h),
\end{equation}
that is a heteroclinic channel.

If that is the case, the condition \textbf{(A'-ii)} is verified.

As in the case of homoclinic connections, the remaining condition    \textbf{(A'-iii)} consists of two explicit non-degeneracy conditions that can be verified numerically.

Thus,  one can verify numerically  the conditions \textbf{(A'-i)}, \textbf{(A'-ii)}, \textbf{(A'-iii)}, from Section \ref{sec:heteroclinic}.

It would be of interest to verify if Theorem \ref{thm:main} can be applied when the Lagrange point $L_3$ is also considered;
some numerical results concerning the dynamics around $L_3$ can be found in \cite{barrabes2006invariant,ceccaroni2016halo}. 

In summary, in this section we have outlined how the conditions of the Theorem \ref{thm:main} can be verified in  the PCRTBP.
The theoretical results of Theorem \ref{thm:main} are independent on the application to the PCRTBP.

\section{Coordinate systems and evolution equations}
\label{section:coordinates}
\subsection{New coordinate systems for the unperturbed system}\label{sec:coordinates}

We consider the case of homoclinic connections described by conditions \textbf{(A-i)}, \textbf{(A-ii)}, \textbf{(A-iii)}.
Under these assumptions, the manifolds $W^\un(\Lambda_0)$ and $W^\st(\Lambda_0)$ intersect transversally along the homoclinic channel $\Gamma_0$.

The next proposition states that, in a neighborhood of each $W^\un(\Lambda_0)$ and $W^\st(\Lambda_0)$, there exists a system of symplectic coordinates such that $\Lambda_0$, $W^\un(\Lambda_0)$ and $W^\st(\Lambda_0)$ have very simple descriptions, and moreover the unperturbed Hamiltonian $H_0$ can be written in  a normal form relative to the corresponding coordinates. As before, for $z\in W^{\st,\un}(\Lambda_0)$, we denote $z^\pm=\Omega^{\pm}(z)$.

\begin{prop}[Normal Forms]\label{prop:normal_form} {$ $}
There exist three systems of  symplectic coordinates\footnote{coordinates  obtained from $(p,q)$ via a canonical transformation}, defined for some range of energies $h\in D$,  as follows:
\begin{itemize}
\item [\textbf{(N)}] A coordinate system $(I,\theta,y,x)$ in a neighborhood $\mathscr{N}$ of $\Lambda_0$ such that for $z\in\mathcal{N}$ we have:
\begin{itemize}
\item[\textbf{(N-i)}]  $z\in\Lambda_0$ if and only if $x(z)=y(z)=0$;
\item[\textbf{(N-ii)}]  $z\in W^\un(\Lambda_0)$  if and only if $y=0$, and $z\in W^\st(\Lambda_0)$ if and only if $x=0$;
\item[\textbf{(N-iii)}]  for $z\in W^\un(\Lambda_0)$ we have  $I(z)=I(z^-)$  and $\theta(z)=\theta(z^-)$, and for $z\in W^\st(\Lambda_0)$ we have  $I(z)=I(z^+)$  and $\theta(z)=\theta(z^+)$;
\item[\textbf{(N-iv)}]  $H_0$ restricted to   $\mathscr{N}$ can be written in a normal form
\begin{equation}\label{eqn:normal_n} \begin{split}
    H_0(p,q)=&H_0(I,x y)\\
    =&h_{0}\left(I\right)+(xy) g _{1}\left(I\right)+\left(x y\right)^{2} g_{2}
    (I,x y)
\end{split}\end{equation}
for some analytic functions $h_0=h_0(I)$, $g_1 =g_1 (I)$,  and  $g _2=g_2(I,x y)$.
\end{itemize}
\item[\textbf{(U)}]
A coordinate system $(I^\un,\theta^\un,y^\un,x^\un)$
in a neighborhood $\mathscr{N}^\un$ of $W^\un(\Lambda_0)$ such that for $z\in\mathcal{N}^\un$ we have:
\begin{itemize}
\item[\textbf{(U-i)}]  $z\in\Lambda_0$ if and only if $x^\un(z)=y^\un(z)=0$;
\item[\textbf{(U-ii)}]  $z\in W^\un(\Lambda_0)$ if and only if $y^{\un}=0$;
\item[\textbf{(U-iii)}]  for $z\in W^\un(\Lambda_0)$  we have $I^\un(z)=I^\un(z^-)$, and $\theta^\un(z)=\theta^\un(z^-)$;
\item[\textbf{(U-iv)}]  $H_0$ restricted to   $\mathscr{N}^\un$ can be written in a normal form
\begin{equation}\label{eqn:normal_un} \begin{split}
    H_0(p,q)=&H_0^\un(I^\un,x^\un y^\un)\\
    =&h_{0}\left(I^{u}\right)+(x^{u}y^{u})g _{1}\left(I^{\un}\right)+\left(x^{\un} y^{\un}\right)^{2} g_{2}
    (I^\un,x^\un y^\un)
\end{split}\end{equation}
for some analytic functions $h_0=h_0(I^\un)$, $g_1 =g_1 (I^\un)$,  and  $g _2=g_2(I^\un,x^\un y^\un)$.
\end{itemize}

\item [\textbf{(S)}] A coordinate system
$(I^\st,\theta^\st,y^\st,x^\st)$ in a neighborhood $\mathscr{N}^\st$ of
$W^\st(\Lambda_0)$ such that for $z\in\mathcal{N}^\st$ we have:
\begin{itemize}
\item[\textbf{(S-i)}]  $z\in\Lambda_0$ if and only if $x^\st(z)=y^\st(z)=0$;
\item[\textbf{(S-ii)}]  $z\in W^\st(\Lambda_0)$ if and only if $x^{\st}=0$;
\item[\textbf{(S-iii)}]  for $z\in W^\st(\Lambda_0)$  we have  $I^\st(z)=I^\st(z^+)$, and $\theta^\st(z)=\theta^\st(z^+)$;
\item[\textbf{(S-iv)}] $H_0$ restricted to   $\mathscr{N}^\st$ can be written in a normal form
\begin{equation}\label{eqn:normal_st} \begin{split}
    H_0(p,q)=&H_0^\st(I^\st,x^\st y^\st)\\=&h_{0}\left(I^{\st}\right)+(x^{\st}y^{\st})g _{1}\left(I^{\st}\right)+\left(x^{\st} y^{\st}\right)^{2}g _{2}
    (I^\st,x^\st y^\st)
\end{split}\end{equation}
for some analytic functions $h_0=h_0(I^\st)$,  $g_1=g_1(I^\st)$,  and  $g_2=g_2(I^\st,x^\st y^\st)$.
\end{itemize}
\end{itemize}
The coordinate systems $(I^\un,\theta^\un,y^\un,x^\un)$ and $(I^\st,\theta^\st,y^\st,x^\st)$ coincide with $(I,\theta,y,x)$ on  $\mathscr{N}$, i.e.,
    \begin{equation}\label{eqn:US}
      (I^\un,\theta^\un,y^\un,x^\un)=(I^\st,\theta^\st,y^\st,x^\st)= (I,\theta,y,x) \textrm { on } \mathscr{N}.
    \end{equation}

The function $h_0,g_1,g_2$ that appear in \eqref{eqn:normal_n}, \eqref{eqn:normal_un} and
\eqref{eqn:normal_st} are the same.
\end{prop}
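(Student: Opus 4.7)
The plan is to first establish a Moser-type normal form in a neighborhood of the NHIM $\Lambda_0$ using the classical theorem at the saddle-center equilibrium, and then extend the resulting symplectic coordinates to neighborhoods of the global unstable and stable manifolds by transport along the unperturbed Hamiltonian flow. Condition \textbf{(A-i)} gives an equilibrium $L$ of saddle-center type with eigenvalues $\pm\lambda,\pm i\omega$. A Moser-type convergent normalization at such an analytic saddle-center \cite{moser1958generalization} produces real-analytic symplectic coordinates $(I,\theta,y,x)$ in a neighborhood of $L$ in which $H_0$ depends only on the two first integrals $I$ and $\xi:=xy$, yielding the form \eqref{eqn:normal_n}. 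Since the Lyapunov periodic orbits $\lambda_0(h)$ form a real-analytic family in $h\in D$, this chart extends to a neighborhood $\mathscr{N}$ of the full NHIM $\Lambda_0$. The identifications $\Lambda_0=\{x=y=0\}$, $W^{\un}_{\loc}(\Lambda_0)=\{y=0\}$, $W^{\st}_{\loc}(\Lambda_0)=\{x=0\}$ are immediate from the normal form and yield \textbf{(N-i)}, \textbf{(N-ii)}, \textbf{(N-iv)}. Property \textbf{(N-iii)} follows because Hamilton's equations restrict on $\{y=0\}$ to $\dot I=0$, $\dot\theta=h_0'(I)$, $\dot x=x\,g_1(I)$, so $(I,\theta)$ are preserved along an unstable fiber $\{(I_0,\theta_0,0,x)\}$ whose flow-asymptotic footpoint is $z^-=(I_0,\theta_0,0,0)\in\Lambda_0$.

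\emph{Extension along $W^\un(\Lambda_0)$.} Let $\Psi:\mathscr{N}\to\R^4$ denote the Moser chart and $\phi_0^t$ its conjugate normal-form flow, i.e., the map on $\R^4$ obtained by integrating $\dot I=0$, $\dot\theta=\partial_I H_0(I,\xi)$, $\dot x=xG(I,\xi)$, $\dot y=-yG(I,\xi)$ with $G:=\partial_\xi H_0$. By construction $\Psi\circ\Phi_0^t=\phi_0^t\circ\Psi$ on $\mathscr{N}$. Extend $\Psi$ by flow-conjugacy: for $z$ in a tube around $W^\un(\Lambda_0)$, pick any $T=T(z)\geq 0$ with $\Phi_0^{-T}(z)\in\mathscr{N}$ and set
\[
\Psi^\un(z) := \phi_0^{\,T}\bigl(\Psi(\Phi_0^{-T}(z))\bigr).
\]
Independence of $T$ on overlaps follows from the group laws for $\Phi_0^t$ and $\phi_0^t$ combined with the conjugacy on $\mathscr{N}$; the choice $T=0$ recovers $\Psi^\un=\Psi$ on $\mathscr{N}$. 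As a composition of symplectomorphisms, $\Psi^\un$ is symplectic. Because $H_0$ is invariant under its own flow and the normal form is preserved by $\phi_0^t$, formula \eqref{eqn:normal_un} with the same functions $h_0,g_1,g_2$ as in \eqref{eqn:normal_n} describes $H_0\circ(\Psi^\un)^{-1}$, giving \textbf{(U-iv)}. Properties \textbf{(U-i)}--\textbf{(U-iii)} follow from the flow-invariance of $\Lambda_0$, $W^\un(\Lambda_0)$, and the $(I,\theta)$-values along unstable fibers, all of which descend through the flow-conjugate extension.

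\emph{Stable case and matching.} The construction of $(I^\st,\theta^\st,y^\st,x^\st)$ on a neighborhood $\mathscr{N}^\st$ of $W^\st(\Lambda_0)$ is identical after reversing the direction of time and swapping the roles of $x$ and $y$, yielding \textbf{(S-i)}--\textbf{(S-iv)} with the same $h_0,g_1,g_2$. Since both $\Psi^\un$ and $\Psi^\st$ reduce to $\Psi$ on $\mathscr{N}$ by taking $T=0$ in their respective definitions, the coincidence relation \eqref{eqn:US} follows.

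\emph{Main obstacle.} The principal technical point is verifying that the flow-conjugacy extension produces a well-defined analytic symplectic chart on a full $4$-dimensional tubular neighborhood of the $3$-dimensional manifold $W^\un(\Lambda_0)$. The direction transverse to $W^\un(\Lambda_0)$ corresponds to the stable eigendirection, which \emph{expands} under the backward flow used in the definition. Consequently the transverse thickness of $\mathscr{N}^\un=\bigcup_{t\geq 0}\Phi_0^t(V)$, where $V$ is a thin transverse tube around $W^\un_\loc(\Lambda_0)\cap\mathscr{N}$, must shrink geometrically with $t$; this is harmless for the subsequent perturbative applications but must be accounted for when verifying analyticity and consistency on overlaps.
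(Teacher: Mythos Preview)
Your proof is correct and follows the same overall strategy as the paper: establish a convergent normal form at the saddle-center equilibrium (the paper cites Giorgilli rather than Moser directly, but the content is the same), then transport the resulting symplectic chart along the unperturbed Hamiltonian flow to cover neighborhoods of $W^\un(\Lambda_0)$ and $W^\st(\Lambda_0)$.

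The one implementational difference worth noting is how the transport is carried out. The paper fixes a \emph{single} time $T>0$ with $\Phi_0^T(\bar{\mathscr N})\supseteq\Gamma_0$ and defines all three charts by the same formula $(\bar I,\bar\theta,\bar y,\bar x)\circ\Phi_0^{-T}$, merely on different domains $\mathscr N^\un=\Phi_0^{T}(\bar{\mathscr N})$, $\mathscr N^\st$, and $\mathscr N$; agreement on the common overlap is then automatic. You instead use a \emph{variable} $T=T(z)$ together with the forward normal-form flow, $\Psi^\un(z)=\phi_0^{T}\circ\Psi\circ\Phi_0^{-T}(z)$, and check consistency on overlaps via the conjugacy relation. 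Your route is a bit more flexible (it can cover arbitrarily long segments of $W^\un(\Lambda_0)$ and makes $\Psi^\un$ literally restrict to $\Psi$ on $\mathscr N$), while the paper's route is technically simpler: no overlap argument is needed, and the shrinking-tube issue you correctly flag simply does not arise, since $\mathscr N^\un$ is the diffeomorphic image of a fixed open set under a single symplectomorphism. In both cases the normal form carries over unchanged because $I$ and $xy$ are first integrals of the flow, so the same functions $h_0,g_1,g_2$ appear in all three expansions.
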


\begin{proof}

Part \textbf{(N)} follows  from \cite{giorgilli2001unstable}, so we will not give  a detailed  proof.

We only summarize  the procedure  to obtain the normal form in a neighborhood of a center-saddle equilibrium point for a $2$-degrees of freedom Hamiltonian $H_0$.
First, the Hamiltonian $H_0$ is expanded in a Taylor series around that equilibrium point (shifted to the origin) as
\[H_0(p,q)=H_2(p,q)+H_3(p,q)+H_4(p,q)+\ldots,\]
where $H_j(p,q)$ is an homogeneous polynomial of degree $j$ in the variables $(p_1,p_2,q_1,q_2)$.
Then, by making  a linear  canonical change of coordinates $(p,q)\mapsto (x,y)$, with the eigenvectors of the linearized system given by
$J\nabla H_2(0,0)$ as the axes of the new system, the quadratic part $H_2$ of $H_0$ can be written in   the new coordinates $(x,y)\in\R^4$  as
\[H_2(x,y)=\lambda x_1y_1+\frac{\omega}{2}(x_2^2+y_2^2),\]
where $\pm\lambda_1=\pm \lambda$ and $\pm\lambda_2=\pm i\omega $ are the eigenvalues of $J\nabla H_2(0,0)$.
Then, via another linear canonical  change of coordinates
\[x_1=\xi_1,\quad y_1=\eta_1,\quad x_2=\frac{\xi_2+i \eta_2 }{\sqrt{2}},\quad \quad y_2=\frac{i\xi_2 +\eta_2}{\sqrt{2}},\] we obtain $H_2$ written in complex variables as
\[H^N_2(\xi,\eta)=\lambda \xi_1\eta_1+i \omega  \xi_2\eta_2=\lambda_1 \xi_1\eta_1+\lambda_2  \xi_2\eta_2.\]
The next step is  to apply a sequence of changes of coordinates to kill all monomials for which the exponent
of $\xi_j$ is different from the exponent of $\eta_j$. Since the  eigenvalue  $\lambda_1=\lambda$ is real and $\lambda_2=i\omega$ is imaginary, there are no small divisors.
The only possible source of divergence is due to the use of Cauchy's estimates
for the derivatives required by the normalization procedure. In \cite{giorgilli2001unstable}
the accumulation of derivatives is controlled via a  KAM technique.
The process can be continued to any order.

Thus, in the limit  $H_0$ can be written as an  expansion
\[H^N_0(\xi,\eta)=H^N_2(\xi_1\eta_1,\xi_2\eta_2)+H^N_3(\xi_1\eta_1,\xi_2\eta_2)+H^N_4(\xi_1\eta_1,\xi_2\eta_2)+\ldots,\]
where $H_j$ is an homogeneous polynomial in $\xi_1\eta_1,\xi_2\eta_2$ of degree $j$.
The series expansion of $H_0^N$ is convergent in a neighborhood $\bar{\mathscr{N}}$ of the origin,
and the coordinate change $x=x(\xi,\eta)$, $y=y(\xi, \eta)$
is canonical and given in terms of convergent series.

There exist periodic orbits $\lambda_0(h)$ around the equilibrium point for all energy levels $h$ sufficiently close
to the energy level of the equilibrium point. This implies that the NHIM $\bar{\Lambda}_0=\bigcup_{h\in\bar{D}}\lambda_0(h)$
is contained in $\bar{\mathscr{N}}$, for some suitable energy range  $h\in\bar{D}$.

To express $H_0^N$ in action-angle coordinates one  applies the canonical transformation
\[\xi_2=\sqrt{\bI}\exp( i\btheta),\quad \eta_2=-i\sqrt{\bI}\exp(-i\btheta).\]
Finally, denote  $\bx=\xi_1$, $\by=\eta_1$.
We obtain the normal form
\begin{equation}\label{eqn:normal1}
H_0^N(\bI,\bx\by)=\lambda \bx\by+{\omega}\bI+H_3(\bI,\bx\by)+H_4(\bI,\bx\by)+\ldots.\end{equation}
Moreover, in these coordinates the following hold:
\begin{itemize}
\item[(i)] The normally hyperbolic invariant manifold $\bar{\Lambda}_0$ is given by $\bx=\by=0$, and each periodic orbit in $\bar{\Lambda}_0$ corresponds to a level set of $\bI$;
\item[(ii)] The local unstable invariant manifold $W^\un_{\bar{\mathscr{N}}}(\bar{\Lambda}_0)$ is given by $\by=0$;
\item[(iii)] The local stable invariant manifold $W^\st_{\bar{\mathscr{N}}}(\bar{\Lambda}_0)$ is given by $\bx=0$.
\end{itemize}

The equations of motion are
\begin{equation}\begin{split}
\frac{d}{dt} \bI=&0,\\
\frac{d}{dt} \btheta=&\omega +\frac{\partial H_3^N}{\partial \bI}+\frac{\partial H_4^N}{\partial \bI}+\ldots,\\
\frac{d}{dt} \by=&-\lambda \by-\frac{\partial H_3^N}{\partial \bx}-\frac{\partial H_4^N}{\partial \bx}+\ldots,\\
\frac{d}{dt} \bx=&\lambda \bx+\frac{\partial H_3^N}{\partial \by}+\frac{\partial H_4^N}{\partial \by}+\ldots.
\end{split}\end{equation}

Note that Hamiltonian $H_0^N$ on  $\bar{\mathscr{N}}$ has two first integrals $\bI$ and $\bx\by$, which are independent and in involution.

This implies that, if $z\in W^\un_{\bar{\mathscr{N}}}(\bar{\Lambda}_0)$ (resp. $z\in W^\st_{\bar{\mathscr{N}}}(\bar{\Lambda}_0)$), since $\bx\by=0$, we have $\bI(z)=\bI(z^-)$ (resp. $\bI(z)=\bI(z^+)$).

The Hamiltonian $H_0^N$  restricted to $\bar{\Lambda}_0$ is given by
\[h_0(\bI):={\omega}\bI+H_3^N(\bI)+H_4^N(\bI)+\ldots,\]
where we denote $H^N_0(\bI,0)=h_0(\bI)$, and $H^N_j(\bI,0)=H^N_j(\bI)$ for all $j$.
Each Lyapunov orbit $\lambda_h$ in $\bar{\Lambda}_0$ corresponds to a unique level set of  $\bI$, so we can write $\lambda_0(h)=\lambda_0 (\bI)$.

By truncating the expansion \eqref{eqn:normal1} at the first order we can write
\begin{equation}\label{eqn:normal2}
H_0^N(\bI,\btheta,\by,\bx)
=h_0(\bI)+(\bx\by)g_1(\bI)+(\bx\by)^2g_2(\bI,\bx\by),
\end{equation}
for some analytic functions $h_0=h_0(\bI)$, $g_1=g_1(\bI,\bx\by)$, and $g_2=g_2(\bI,\bx\by)$.

For points $z\in W^\un_{\bar{\mathscr{N}}}(\bar{\Lambda}_0)$ or $z\in W^\st_{\bar{\mathscr{N}}}(\bar{\Lambda}_0)$, since $\bx\by=0$, we have
\[\frac{d}{dt}\theta(z)=\frac{\partial h_0}{\partial \bI}(\bI(z)).\]
This implies  that if $z\in W^\un_{\bar{\mathscr{N}}}(\bar{\Lambda}_0)$ (resp. $z\in W^\st_{\bar{\mathscr{N}}}(\bar{\Lambda}_0)$),
we have $\btheta(z)=\btheta(z^-)$ (resp. $\btheta(z)=\btheta(z^+)$).

The coordinate system $(\bI,\btheta, \by,\bx)$ constructed above is not yet the coordinate system from part \textbf{(N)}. The desired coordinate system $(I,\theta,y,x)$ will be constructed below by flowing in time the coordinate system $(\bI,\btheta, \by,\bx)$.

Now we construct the coordinate system claimed in part \textbf{(U)}.

We extend the coordinate system  $(\bI,\btheta,\by,\bx)$ along the flow to a neighborhood $\mathscr{N}^\un$ of $W^\un(\bar{\Lambda}_0)$, up to a neighborhood of the homoclinic manifold, as follows.
Let $T>0$ be a time such that $\Phi^T_0(\bar{\mathscr{N}})\supseteq \Gamma$.
Let  $\mathscr{N}^\un:= \Phi^T_0(\bar{\mathscr{N}})$.
Each point $z\in \mathscr{N}^\un$ is of the form $z=\Phi^T_0(\zeta)$ with $\zeta\in\bar{\mathscr{N}}$.
We define the coordinates $(I^\un,\theta^\un, y^\un, x^\un)$  of $z$ to be equal to the coordinates $(\bI,\btheta,\by,\bx)$ of $\zeta$, or equivalently
\begin{equation}\label{eqn:coord_u}
  (I^\un,\theta^\un,y^\un, x^\un)(z)=(\bI,\btheta,\by,\bx)(\Phi^{-T}_0(z)).
\end{equation}

The restriction of the coordinates $(I^\un,\theta^\un,y^\un, x^\un)$ to $\bar{\mathscr{N}}\cap\mathscr{N}^\un$ is given by $(\bI,\btheta,,\by\bx)\circ \Phi^{-T}_0$.

Since the coordinates  $(I^\un,\theta^\un,y^\un, x^\un)$ of a point $z$ are the coordinates $(\bI,\btheta,\by,\bx)$ of $\Phi^{-T}_0(z)$, then they are symplectic, and they yield the same normal form expansion of $H_0$ as \eqref{eqn:normal2}.
More precisely,
\begin{equation*}
\begin{split}
H(I^\un ,\theta^\un,& y^\un,x^\un)\\
=&H(\bI\circ\Phi^{-T}_0 ,\btheta\circ\Phi^{-T}_0,\by\circ\Phi^{-T}_0 ,\bx\circ\Phi^{-T}_0)\\
=&h_0(\bI\circ \Phi^{-T}_0)\\
&+\left ((\bx\circ\Phi^{-T}_0)\cdot (\by\circ\Phi^{-T}_0)\right)g_1 (\bI\circ\Phi^{-T}_0)\\
&+\left ((\bx \circ \Phi^{-T}_0)\cdot (\by\circ \Phi^{-T}_0)\right)^2g_2 \left((\bI\circ \Phi^{-T}_0,\bx\circ \Phi^{-T}_0) \cdot (\by \circ\Phi^{-T}_0)\right)\\
=&h_0(I^\un)+(x^\un y^\un)g_1 (I)+(x^\un y^\un)^2g_2 (I^\un,x^\un y ^\un).
\end{split}
\end{equation*}

Now we construct the coordinate system claimed in part \textbf{(S)}.

We extend the coordinate system  $(I,\theta,y,x)$ along the flow to a neighborhood $\mathscr{N}^\st$ of $W^\st(\bar{\Lambda}_0)$, up to the homoclinic manifold, as follows.
Start with the coordinates $(\bI,\btheta,\by,\bx)\circ \Phi^{-2T}_0$ defined in the neighborhood $ \Phi^{-2T}_0(\bar{\mathscr{N}})$ of $\bar{\Lambda}_0$.
Let  $\mathscr{N}^\st:= \Phi^{-3T}_0(\bar{\mathscr{N}})$.
Each point $z\in \mathscr{N}^\st$ is of the form $z=\Phi^{-T}_0(\zeta)$ with $\zeta\in\Phi^{-2T}_0(\bar{\mathscr{N}})$.
We define the coordinates $(I^\st,\theta^\st,y^\st, x^\st)$  of $z$ to be equal to the coordinates $(\bI,\btheta,\by,\bx)\circ \Phi^{-2T}_0$ of $\zeta$, or equivalently
\begin{equation}\label{eqn:coord_s}
(I^\st,\theta^\st,y^\st,x^\st)(z)=(\bI,\btheta,\by,\bx)(\Phi^{-2T}_0(\Phi^T_0(z)))=(\bI,\btheta,\by,\bx)
(\Phi^{-T}_0(z)).
\end{equation}

Since the coordinates  $(I^\st,\theta^\st,y^\st, x^\st)$ of a point $z$ are the coordinates $(\bI,\btheta,\by,\bx)\circ \Phi^{-2T}_0$ of $\Phi^{T}_0(z)$, then they are symplectic, and they yield the same normal for expansion of $H_0$ as in \eqref{eqn:normal2}, that is
\[H^N_{\mathscr{N}^\st}(I^\st,\theta^\st, y^\st, x^\st)=h_0(I^\st)+(x^\st y^\st)g_1(I)+(x^\st y^\st)^2g_2(I^\st,x^\st y ^\st).\]

The restriction of the coordinates $(I^\st,\theta^\st,y^\st, x^\st)$ to $\bar{\mathscr{N}}\cap\mathscr{N}^\st$ is given by $(\bI,\btheta,\by,\bx)\circ \Phi^{-T}_0$.

Now we define the coordinate system claimed in  part \textbf{(N)}.

We construct the coordinate system $(I,\theta,y, x)$ in a neighborhood $\mathscr{N}:=\bar{\mathscr{N}}\cap
 \mathscr{N}^\un\cap  \mathscr{N}^\st$ by
\begin{equation}\label{eqn:coord_n}
 (I,\theta,y,x)(z)=(\bI,\btheta,\by,\bx)(\Phi^{-T}_0(z)).
\end{equation}
In terms of the coordinates $(I,\theta,y,x)$, the Hamiltonian has the same normal form expansion as in \eqref{eqn:normal2}, that is
\[H^N_{\mathscr{N}}(I,\theta,y,x)=h_0(I)+(x y)g_1(I)+(x y)^2g_2(I,x y ).\]

We note that $\mathscr{N}$ is a small neighborhood of the equilibrium point in the phase space. Thus we restrict the Hamiltonian  to an energy range $h\in D$ such that $\Lambda_0=\bigcup_{h\in D}\lambda_h\subseteq \mathscr{N}$.
Moreover, we choose $D$ such that $W^\un(\Lambda_0)$, $W^\st(\Lambda_0)$ are contained in $\mathscr{K}$, where the set
$\mathscr{K}$ is as in condition \textbf{(A-ii)}.

By the above constructions, the coordinates $(I,\theta,y,x)$, $(I^\un,\theta^\un,y ^\un, x^\un)$ and $(I^\st,\theta^\st, y^\st, x^\st)$ satisfy the properties listed in Proposition \ref{prop:normal_form}.
\end{proof}

We now formulate some  non-degeneracy condition, in terms of the above normal form coordinates,  that $H_0$ must satisfy for our results.
\begin{itemize}
\item[\textbf{(A-iii-a)}] \emph{The Hamiltonian $H_0$, written in the normal form  given by Proposition \ref{prop:normal_form}
satisfies:}
\begin{equation}
\label{eqn:nondegeneracy1}
\begin{split}
\frac{\partial h_0}{\partial I}(I_0)\neq & 0 \textrm { for all } I_0=I_0(h) \textrm { with } h\in D,\\
g_1(I_0)\neq & 0  \textrm { for all } I_0=I_0(h) \textrm { with } h\in D.
\end{split}
\end{equation}
\end{itemize}

\begin{rem}\label{rem:coord_mismatch}
It is important to note that the coordinates $(I^\un,\theta^\un,x^\un,y^\un)$  and
$(I^\st,\theta^\st,x^\st,y^\st)$ do not generally agree at  homoclinic points  away from the Lyapunov orbit, where both coordinate systems are well defined.
Nevertheless,   for any homoclinic point $z\in M_h\cap  W^\un(\Lambda_0)\cap W^\st(\Lambda_0)$, we have that $I^\un(z)=I^\st(z)=I_h$.
\end{rem}

\begin{rem}
The above result on the existence of a convergent normal form in a neighborhood of a center-saddle point,  obtained via a convergent  canonical coordinate transformation,  is  valid for 2-degrees of  freedom Hamiltonian systems.
For higher degree of freedom  Hamiltonian systems,  the same result is true under some additional non-resonance conditions (see \cite{giorgilli2001unstable}).
A related approach to the normal form that we use here can be found in \cite{moser1958generalization}.
A numerical methodology for the effective computations of  normal forms is developed in \cite{jorba1999methodology}.
\end{rem}

\begin{rem}\label{rem:normal_form}
Now we discuss the case when we have heteroclinic connections between two  NHIMs $\Lambda^1_0$  around $L^1$ and $\Lambda^2_0$ around $L^2$,
as in Section \ref{sec:heteroclinic}.
The manifolds $W^\un(\Lambda^1_0)$ and $W^\st(\Lambda^2_0)$ are assumed to intersect transversally.

The construction of the normal form coordinates from Proposition \ref{prop:normal_form} only works in a small neighborhood of the equilibrium point. In the case of heteroclinic connections, since $L^1$ and $L^2$ are on different energy level,
the theory does not guarantee the simultaneous  existence two normal form coordinate system  around $L^1$ and $L^2$ respectively, for some common energy range.

In this case, we need to make a separate assumption that there exist  two systems of normal form coordinates
around $L^1$ and $L^2$, for some common energy range.
Indeed, this assumption is already made in \textbf{(A'-i-b)}.

Based on this assumption, we can construct, as  in the proof of Proposition \ref{prop:normal_form}, two systems of coordinates
\begin{itemize}
\item $(I^{1,\un},\theta^{1,\un},y^{1,\un},x^{1,\un})$  in a neighborhood $\mathscr{N}^{1,\un}\subseteq\mathscr{K}$
of $W^\un(\Lambda^1_0)$,
\item $(I^{2,\st},\theta^{2,\st},y^{2,\st},x^{2,\st})$  in a neighborhood $\mathscr{N}^{2,\st} \subseteq\mathscr{K}$
of $W^\st(\Lambda^2_0)$,
\end{itemize}
so that they satisfy properties similar to those in the case of homoclinic connections.

\end{rem}

\subsection{The scattering map for the unperturbed  system}
\label{sec:scattering_PCRTBP}
Consider the scattering map ${\sigma}_0$ associated to ${\Gamma}_0$.
We will express the scattering map in terms of the coordinates
$(I^\un,\theta^\un,y^\un,x^\un)$  and
$(I^\st,\theta^\st,y^\st,x^\st)$.

Consider a homoclinic point $z_0\in W^\un (\lambda_{h})\cap W^\st (\lambda_{h})$. Both coordinate systems  $(I^\un,\theta^\un,y^\un,x^\un)$  and $(I^\st,\theta^\st,y^\st,x^\st)$ are defined in the neighborhood of $z_0$.

By Proposition \ref{prop:normal_form}  the action coordinate of $z_0\in{\Gamma}_0$ is the same as the action of the unstable and stable foot-points  $z_0^-,z_0^+\in{\Lambda}_0$,  that is $I(z_0^-)=I^\un(z_0^-)=I^\un(z_0)= I^\st(z_0) = I^\st(z_0^+)=I(z_0^+)$.
Therefore the scattering map ${\sigma}_0$ preserves the $I$-coordinate.
Hence ${\sigma}_0$ is a phase shift on  $I$-level-sets in ${\Lambda}_0$ wherever it is defined:
\begin{equation}\label{eqn:sigma0ii}{\sigma}_0(I,\theta)=(I,\theta+\Delta(I)).\end{equation}

In general, $\theta^\un(z_0)\neq \theta^\st(z_0)$.
It is easy to see that the phase-shift determined by the unperturbed scattering map is
given by the `mismatch' between the two angle coordinates.

\begin{prop}\label{prop:mismatch} Let $h$ be a fixed energy level and let $z_0\in\Gamma_0\cap M_{h}$.

(i) The angle mismatch $\theta^\st(z_0)-\theta^\un(z_0)$ is a constant that depends only on $h$, so we write it as $\theta^\st(h)-\theta^\un(h)$.

(ii) The scattering map $\sigma_0$  is given by $(I,\theta)\mapsto \sigma_0(I,\theta)=(I,\theta+\Delta(I))$, where $\Delta(I)=\theta^\st(h)-\theta^\un(h)$ for $I=I_{h}$.
\end{prop}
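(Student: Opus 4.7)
My plan is to reduce both parts of Proposition \ref{prop:mismatch} to a short computation using the three coordinate systems from Proposition \ref{prop:normal_form}, together with the observation that inside $W^\un(\Lambda_0)\cap W^\st(\Lambda_0)$ both angle coordinates $\theta^\un$ and $\theta^\st$ satisfy the same linear ODE.

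First, I would prove part (ii) assuming part (i). Given $z_0\in \Gamma_0\cap M_{h}$ with foot-points $z_0^\pm=\Omega^\pm(z_0)\in\Lambda_0$, property \textbf{(U-iii)} gives $\theta^\un(z_0)=\theta^\un(z_0^-)$ and property \textbf{(S-iii)} gives $\theta^\st(z_0)=\theta^\st(z_0^+)$. Since $z_0^\pm\in\Lambda_0\subseteq \mathscr{N}$, the identity \eqref{eqn:US} says that the three coordinate systems coincide on $\Lambda_0$, so $\theta(z_0^-)=\theta^\un(z_0)$ and $\theta(z_0^+)=\theta^\st(z_0)$. Combined with the equality $I^\un(z_0)=I^\st(z_0)=I_h$ noted in Remark \ref{rem:coord_mismatch}, this gives $z_0^+ = (I_h,\theta(z_0^-) + \theta^\st(z_0)-\theta^\un(z_0))$ in the $(I,\theta)$ coordinates on $\Lambda_0$, so the scattering map acts as the phase shift \eqref{eqn:sigma0ii} with $\Delta(I_h)=\theta^\st(h)-\theta^\un(h)$.

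For part (i) I would show that the mismatch $\theta^\st-\theta^\un$ is constant along the unperturbed flow on $\Gamma_0\cap M_h$. By \textbf{(U-iv)}, on $W^\un(\Lambda_0)$ we have $y^\un=0$, hence $x^\un y^\un=0$, so the Hamiltonian reduces to $h_0(I^\un)$ and Hamilton's equations give $\dot I^\un=0$ and $\dot\theta^\un=h_0'(I^\un)$. The analogous equations hold in the stable coordinates by \textbf{(S-iv)}. On $\Gamma_0\cap M_h \subseteq W^\un(\Lambda_0)\cap W^\st(\Lambda_0)$ both coordinate systems are defined simultaneously, $I^\un=I^\st=I_h$ is constant, and both angles advance at the common constant rate $h_0'(I_h)$. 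Hence $\theta^\st-\theta^\un$ is a first integral along any orbit contained in $\Gamma_0\cap M_h$. Condition \textbf{(A-iii-a)} ensures $h_0'(I_h)\neq 0$, so the flow foliates each energy slice by one-dimensional leaves.

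The main obstacle is ruling out the possibility that $\theta^\st-\theta^\un$ could take different values on distinct orbits inside $\Gamma_0\cap M_h$. A dimension count ($\Lambda_0$ is $2$-dimensional, so $W^\un(\Lambda_0)$ and $W^\st(\Lambda_0)$ are $3$-dimensional and meet transversally in the $4$-manifold $M$ along the $2$-dimensional $\Gamma_0$, whose intersection with each $3$-dimensional energy level $M_h$ is $1$-dimensional), together with the homoclinic-channel requirement that $\Omega^\pm\vert_{\Gamma_0}$ are diffeomorphisms onto their images, forces $\Gamma_0\cap M_h$ to consist of a single orbit after shrinking $\Gamma_0$ to a connected piece if necessary. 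The computation in the previous paragraph then determines the mismatch as a function of $h$ alone, justifying the notation $\theta^\st(h)-\theta^\un(h)$ and completing the proof.
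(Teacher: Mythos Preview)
Your proof is correct and follows essentially the same approach as the paper's. The only minor variation is in part (i): you show directly from the normal form that $\dot\theta^\un=\dot\theta^\st=h_0'(I_h)$ along $\Gamma_0\cap M_h$, whereas the paper reaches the same conclusion by invoking the equivariance of the projections $\Omega^\pm$ and the rigid rotation on $\lambda_0(h)$; both arguments are equivalent, and your dimension count matches the paper's implicit assumption that $\Gamma_0\cap M_h$ is a single orbit segment.
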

\begin{proof}

(i) If $z_0$ is a point in $\Gamma_0\cap M_h$, then $\Gamma_0\cap M_{h}$ consists of  points of the form $\Phi^t_0(z)$, $t\in [t_1,t_2]$, for some interval $[t_1,t_2]$ containing $0$.

From $\Omega^+(z_0)=z_0^+ $ and
$\Omega^-(z_0)=z_0^-$, by the equivariance property \eqref{eqn:equivariance}
we have $\Omega^+(\Phi^t_0 (z_0))=\Phi^t_0 (z_0^+)$ and
$\Omega^-(\Phi^t_0 (z_0))=\Phi^t_0 (z_0^-)$ for $t\in [t_1,t_2]$.
By By Proposition \ref{prop:normal_form} \textbf{(U-iii)} and \textbf{(S-iii)},
and by the fact that the dynamics restricted to
$\lambda_{h}$ is a rigid rotation in $\theta$,  we have
\begin{equation*}
\begin{split}
\theta^\st(\Phi^t_0 (z_0))-\theta^\un(\Phi^t_0 (z_0))=& \theta(\Phi^t_0 (z_0^+))-\theta(\Phi^t_0(z_0^-))\\
=& \theta(z_0^+)-\theta(z_0^-),
\end{split}\end{equation*}
which depends only on the energy level $h$.

(ii) In action-angle coordinates, if $z_0^-=(I,\theta)$, then $\sigma_0(z_0^-)=z_0^+=(I,\theta+\Delta(I))$.
From (i) we immediately deduce that  $\Delta(I)=\Delta(I(h))= \theta^\st(h)-\theta^\un(h)$.
\end{proof}

Turning now the attention to the unperturbed  homoclinic channel $\tGamma_0$ in the extended phase space,
this can be parametrized in terms of the coordinates $(I^\un,\theta^\un,t)$, as well as in terms of the  coordinates $(I^\st,\theta^\st,t)$:
\begin{equation}
\begin{split}
   \tGamma_0=& \{(I^\un,\theta^\un,y^\un, x^\un,t)\,|\, y^\un=0,\,x^\un=x^\un_0(I^\un,\theta^\un,t)\}, \\
   =& \{(I^\st,\theta^\st, x^\st,y^\st, t)\,|\, y^\st_0=y^\st(I^\st,\theta^\st,t),\,x^\st=0\}.
\end{split}
\end{equation}
That is, $\tGamma_0$ is a graph over the $(I^\un,\theta^\un,  t)$-variables, as well as a graph over
the $(I^\st,\theta^\st,  t)$-variables.
Each homoclinic point $\tz_0\in\tilde{\Gamma}_0$ is associated to unique $I^\un=I^\st=I_0$, $\theta^\un$, $\theta^\st$ with $\theta^\st(\tz_0)-\theta^\un(\tz_0)=\Delta (I_0)$.
Moreover, for the points on the homoclinic orbit $\tPhi^\tau(\tz_0)$, we also have that
$\theta^\st(\tPhi^\tau(\tz_0))-\theta^\un(\tPhi^\tau(\tz_0))=\Delta (I_0)$.

The corresponding scattering map is given by
\[\tilde{\sigma}_0(I,\theta,t)=(I,\theta+\Delta(I),t).\]

Define \begin{equation}\begin{split}
\mathscr{D}_{I_0}=&\{\tz_0\,|\,\theta^\st(\tz_0)-\theta^\un(\tz_0) = \Delta (I_0),\, I^\st(\tz_0)-I^\un(\tz_0) = 0\}.
\end{split}\end{equation}

We require the following condition
\begin{itemize}
\item[\textbf{(A-iii-b)}] \emph{For all $I_0=I_0(h)$ with $h\in D$, $(\Delta(I_0),0)$ is a regular value of the function \[\tz_0\mapsto (\theta^\st(\tz_0)-\theta^\un(\tz_0), I^\st(\tz_0)-I^\un(\tz_0)).\]}
\end{itemize}

Condition \textbf{(A-iii-b)} implies that $\mathscr{D}$ is  a codimension-$2$ surface in $\tM$, which intersects transversally $\tGamma_0$ along a $1$-dimensional curve.

At this point, we have completed the formulation of condition \textbf{(A-iii)} in Theorem \ref{thm:main}, consisting of
\textbf{(A-iii-a)}, given in Section \ref{sec:coordinates}
and \textbf{(A-iii-b)}, given above.

\subsection{Perturbed evolution equations}\label{sec:evolution}

In the sequel, we will identify the vector fields $\X^0$ and $\X^1$ with derivative operators acting on functions.
In general, given a smooth vector field $\X$ and a smooth function $f$ on a manifold $M$, and  $(z_j)_{j\in\{1,\ldots,\dim(M)\}}$   a system of local coordinates, then
\begin{equation}
\label{eqn:vf_derivative}(\X  f)(z) = \sum_j (\X)_j (z) (\partial_{z_j} f)(z).
\end{equation}

Consider one of the coordinate systems defined in Section~\ref{sec:coordinates}.
To simplify notation, we will denote such a  coordinate system by $(I,\theta,x,y)$. Below we provide evolution equations of these coordinates, expressing the time-derivative of each coordinate along a solution of the perturbed system.
We include the expression for a general perturbation, as well as for the   case when the perturbation is Hamiltonian:
\begin{equation}\label{eqn:evolution3}
\begin{split}
\frac{d}{dt} I &= (\X^0+\eps \X^1)(I) =  -\frac{\partial{H_0}}{\partial \theta} + \eps\X^1(I) \\&=
 -\frac{\partial{H_0}}{\partial \theta}-\eps \frac{\partial{H_1}}{\partial \theta}.
\end{split}
\end{equation}
\begin{equation}
\label{eqn:evolution4}
\begin{split}\frac{d}{dt} \theta &=  (\X^0+\eps \X^1)(\theta) =  \frac{\partial{H_0}}{\partial I} + \eps\X^1(\theta) \\&=
 \frac{\partial{H_0}}{\partial I}+\eps \frac{\partial{H_1}}{\partial I}.
\end{split}
\end{equation}
\begin{equation}\label{eqn:evolution2}
\begin{split}
\frac{d}{dt} y &=   \X^0(y) + \eps\X^1(y)  = -\frac{\partial{H_0}}{\partial x} + \eps\X^1(y)\\&=
-\frac{\partial{H_0}}{\partial x}-\eps\frac{\partial{H_1}}{\partial x}.
\end{split}
\end{equation}
\begin{equation}\label{eqn:evolution1}
\begin{split}
\frac{d}{dt} x &=   \X^0(x) + \eps\X^1(x)  =\frac{\partial{H_0}}{\partial y} + \eps\X^1(x)\\&=
\frac{\partial{H_0}}{\partial y}+\eps\frac{\partial{H_1}}{\partial y}.
\end{split}
\end{equation}

\section{Proof of the main result}
\label{section:proofs}
In this section we prove Theorem \ref{thm:main}.
\subsection{Perturbed normally hyperbolic invariant manifolds}\label{sec:perturbed_NHIM}
In this section we prove the assertions (i) and (ii) of Theorem \ref{thm:main}.

We only give the details in the case when  $H_0$ satisfies  the conditions \textbf{(A-i)}, \textbf{(A-ii)}, \textbf{(A-iii)}. The case when $H_0$ satisfies \textbf{(A'-i)}, \textbf{(A'-ii)}, \textbf{(A'-iii)} follows similarly.

\subsubsection{Persistence of the normally hyperbolic invariant manifold under perturbation}
We have that $\Lambda_0$ is a NHIM for the flow $\Phi^t_0$ of $\X^0$.
Then $D\Phi^t_0(z)$ satisfies expansion/contraction rates as in  Appendix \ref{sec:NHIM}, for all $z\in\Lambda_0$, where we denote the  constant and the expansion and contraction rates by $C$,  $\lambda_-$, $\lambda_+$, $\lambda_c$, $\mu_c$, $\mu_-$, $\mu_+$, respectively.

It is immediate that $\tLambda_0=\Lambda_0\times \R$ is a NHIM for the flow $\tPhi^\tau_0$ of the extended system \eqref{eqn:generalperturbation_t}.

Under the assumptions of Theorem \ref{thm:main}, $\X^1=\X^1(z,t;\eps)$ is
uniformly differentiable in all variables.  The theory of normally hyperbolic invariant manifolds,
\cite{Fenichel71,HirschPS77,Pesin04} (a handy summary of
the results of the theory is \cite{DelshamsLS06a}),  asserts that there exists $\eps_1$ such that the manifold $\tLambda_0$
persists as a  normally hyperbolic manifold $\tLambda_\eps$, for all $|\eps| < \eps_1$, which is locally invariant under the flow $\tPhi^\tau_\eps$.
The persistent NHIM $\tLambda_\eps$ is $O(\eps)$ close in the $\mathcal{C}^\ell$-topology to $\tLambda_0$, where $\ell$ is as in \ref{eqn:ratesdifferentiable}.  The locally invariant manifolds are in fact invariant manifolds for an extended system, and they depend on the extension. Hence,  they do not need to be unique.
Nevertheless, we point out that, given a family of systems, it is possible
to choose the invariant manifolds in such a way that the invariant
manifods depend
smoothly on parameters, as well as the stable and unstable bundles and
the stable and unstable manifolds.

For the  perturbed NHIM $\tLambda_\eps$, $|\eps| < \eps_1$, there exists an invariant splitting of the tangent bundle $T\tLambda_\eps$, similar  to that in \eqref{eqn:NHIM_splitting}, so that $D\tPhi^\tau_\eps(\tz)$ satisfies expansion/contraction relations similar to those in
\eqref{eqn:NHIM_rates} for all $\tz\in\tilde{\Lambda}_\eps$, for some constants $\tilde C$, $\tilde\lambda_-$, $\tilde\lambda_+$, $\tilde\lambda_c$, $\tilde\mu_c$, $\tilde\mu_-$, $\tilde\mu_+$.
These constants are independent of $\eps$, and can be chosen as close as desired to the unperturbed ones, that is, to
$C$, $\lambda_-$, $\lambda_+$, $\lambda_c$, $\mu_c$, $\mu_-$, $\mu_+$, respectively,  by choosing $\eps_1$ suitably small.

There exist unstable and stable manifolds $W^\un(\tLambda_\eps)$, $W^\st(\tLambda_\eps)$ associated to $\tLambda_\eps$,
and there exist corresponding projection maps $\Omega^-:W^\un(\tLambda_\eps)\to\tLambda_\eps$, and $\Omega^+: W^\st(\tLambda_\eps)\to\tLambda_\eps$.

For $\tilde{z}^+=\Omega^+(\tilde{z})$, with $\tilde{z}\in W^\st(\tLambda_\eps)$ we have
\begin{equation}\label{eqn:convergence_s}
d( \tilde{\Phi}^\tau_\eps(\tilde{z}), \tilde{\Phi}^\tau_\eps(\tilde{z}^+) ) \le
 C_{\tilde{z}} e^{\tau \tilde{\lambda}_+}, \quad \textrm { for all } \tau \ge
0,
\end{equation}
and for $\tilde{z}^-=\Omega^-(\tilde{z})$, with $\tilde{z}\in W^\un(\tLambda_\eps)$ we have
\begin{equation}\label{eqn:convergence_u}
d(  \tilde{\Phi}^\tau_\eps(\tilde{z}), \tilde{\Phi}^\tau_\eps(\tilde{z}^-) \le
C_{\tilde{z}} e^{\tau \tilde{\mu}_-}, \quad \textrm { for all } \tau \leq 0,
\end{equation}
for some ${C}_{\tilde z}>0$. The constant $\tilde{C}_{\tilde{z}}$ can be chosen uniformly bounded, provided that we restrict to  $z$ to the compact neighborhood $\mathscr{K}$ given by \textbf{(A-ii)}, and we use the fact that $\X^1=\X^1(z,t;\eps)$ is uniformly differentiable in all variables.

To simplify notation, from now on we will drop the symbol $\tilde{}$ from $\tilde C$, $\tilde{C}_{\tilde z}$ $\tilde\lambda_-$, $\tilde\lambda_+$,  $\tilde\mu_-$, $\tilde\mu_+$, $\tilde\lambda_c$, $\tilde\mu_c$.

In the sequel, we will fix  a choice $\tLambda_\eps$, and all computations will be performed relative to that choice.
Nevertheless the estimate for the perturbed scattering map $\tilde{\sigma}_\eps$ are independent of the choice of the locally invariant manifold  $\tLambda_\eps$.

\subsubsection{Persistence of the transverse intersection of the hyperbolic invariant manifolds under perturbation}\label{sec:peristence_transverse}
For the unperturbed system the unstable and stable manifolds $W^\un(\tLambda_0)$, $W^\st(\tLambda_0)$ intersect transversally along the $3$-dimensional homoclinic channel $\tilde{\Gamma}_0$.
By the persistence of transversality under small perturbations, it follows that
$W^\un(\tLambda_\eps)$, $W^\st(\tLambda_\eps)$  intersect transversally along $\tilde{\Gamma}_\eps$, for all $|\eps| < \eps_1$, provided $\eps_1$ is chosen small enough. The condition  \eqref{goodtransversal}  in the definition of a homoclinic/heteroclinic channel is also a transversality-type condition, so it is also persistent under small perturbations.
We conclude that (i) and (ii) from Theorem \ref{thm:main} hold true for all $|\eps| < \eps_1$, provided $\eps_1$ is chosen small enough.

Recall from Section \ref{sec:scattering_PCRTBP} that the unperturbed homoclinic channel $\tGamma_0$ can be described as a graph over the $(I^\un,\theta^\un,  t)$-variables, as well as a graph over
the $(I^\st,\theta^\st,  t)$-variables.
Therefore, the  perturbed homoclinic channel $\tGamma_\eps$, for $|\eps| < \eps_1$,
 can also be described as a graph over the $(I^\un,\theta^\un,  t)$-variables, as well as a graph over
the $(I^\st,\theta^\st,  t)$-variables.
Therefore, each homoclinic point $\tz_\eps\in\tilde{\Gamma}_\eps$ is associated to unique coordinate triples $(I^\un,\theta^\un,t)$, and $(I^\st,\theta^\st,t)$.
More precisely, we have
\begin{equation}
\begin{split}
   \tGamma_\eps =& \{(I^\un,\theta^\un,y^\un, x^\un, t)\,|\, y^\un=y^\un_\eps(I^\un,\theta^\un,t),\,x^\un=x^\un_\eps(I^\un,\theta^\un,t) \}, \\
   \tGamma_\eps =& \{(I^\st,\theta^\st,y^\st,  x^\st,t)\,|
   \,y^\st=y^\st_\eps(I^\st,\theta^\st,t),\,x^\st=x^\st_\eps(I^\st,\theta^\st,t)\},
\end{split}
\end{equation}
with $y^\un_\eps(I^\un,\theta^\un,t)=O(\eps)$ and $x^\st_\eps(I^\st,\theta^\st,t)=O(\eps)$.

By condition \textbf{(A-iii-b)}, since $\mathscr{D}_{I_0}$ intersects $\tGamma_0$ transversally,
it follows that $\mathscr{D}_{I_0}$ intersects $\tGamma_\eps$ for $\eps$ sufficiently small.
Therefore, given $\tz_0$ in $\tGamma_0$ we will associate to it  a homoclinic point $\tz_\eps \in \mathscr{D}_{I_0}$, that is,  satisfying the conditions
\begin{equation}\label{eqn:theta_st_minus_theta_un}
\begin{split}
\theta^\st(\tz_\eps)-\theta^\un(\tz_\eps)=&\theta^\st(\tz_0)-\theta^\un(\tz_0),\\
I^\st(\tz_\eps)-I^\un(\tz_\eps)=&0.
\end{split}
\end{equation}

Note that a homoclinic point $\tz_\eps$ satisfying these conditions is not uniquely defined.
We can impose on $\tz_\eps$  an additional condition, for instance $t(\tz_\eps)=t(\tz_0)$, or
$I^\st(\tz_\eps)=I^\un(\tz_\eps)=I_0$. Such an extra condition can be useful for applications.

In the sequel we will compare the scattering map associated to $\tz_0$ with the scattering map associated to $\tz_\eps$ satisfying \eqref{eqn:theta_st_minus_theta_un}.

\subsection{Perturbed scattering map}
\label{sec:perturbed_scattering}
In this section we prove the assertion (iii) of Theorem \ref{thm:main}.

We start with the unperturbed system \eqref{eqn:unperturbed}.
We recall that for  a given homoclinic channel $\Gamma_0$,
the corresponding scattering map  $\sigma_0$,
is a phase-shift of the form
\[\sigma_0(I,\theta)=(I,\theta+\Delta(I)).\]

We choose and fix an energy level $h$ of $H_0$, and a point $z_0\in\Gamma_0\cap M_h$.
In the $(I^{\un,\st},\theta^{\un,\st},y^{\un,\st},x^{\un,\st})$-coordinates, $z_0$ is given by
\begin{equation}\label{eqn:notation1}
z_0=(I^\st_0,\theta^\st_0,y^\st_0,0)=(I^\un_0,\theta^\un_0,0,x^\un_0),\end{equation}
where $I^\st_0=I^\un_0=I_0$.
The effect of the flow $\Phi^\tau_0$ on $z_0$ in these coordinates is given by
\begin{equation}\label{eqn:notation2}
\Phi^\tau_0(z_0)=(I^\st_0,\theta^\st_0+\omega(I_0)\tau,y^\st(\tau),0)=
(I^\un_0,\theta^\un_0+\omega(I_0)\tau,0,x^\un(\tau)),\end{equation}
where $y^\st(\tau)$ and $x^\un(\tau)$ are the $y^\st$-component and  the $x^\un$-component, respectively, of $\Phi^\tau_0(z_0)$ evaluated at time $\tau$, and $\omega(I_0)=\frac{\partial h_0}{\partial I}$.

There exist uniquely defined points $z^-_0$, $z^+_0$ in $\lambda_0(h)$ such that $W^\un(z^-_0)\cap (\Gamma_0\cap M_h)= W^\st(z^+_0)\cap (\Gamma_0\cap M_h)=\{z_0\}$.
In the $(I,\theta,y,x)$-coordinates, the foot-points $z_0^\pm$ are given by
\begin{equation}\label{eqn:notation3}\begin{split}
z^-_0=(I_0,\theta^-_0,0,0),\quad
z^+_0= (I^\st_0,\theta^+_0,0,0),
\end{split}
\end{equation}
where $\theta^-_0=\theta^\un_0$ and $\theta^+_0=\theta^\st_0$.
The effect of the flow $\Phi^\tau_0$ on $z_0^\pm$ in these coordinates is given by
\begin{equation}\label{eqn:notation4}\begin{split}
\Phi^\tau_0(z^-_0)=(I_0,\theta^-_0+\omega(I_0)\tau,0,0),\\
\Phi^\tau_0(z^+_0)= (I_0,\theta^+_0+\omega(I_0)\tau,0,0).
\end{split}
\end{equation}

The scattering map $\sigma_0$ takes $z^-_0\in\lambda_{0}(h)$ into $z^+_0\in\lambda_{0}(h)$.

In the extended system \eqref{eqn:generalperturbation_t}, the corresponding homoclinic point is $\tz_0=(z_0,t_0)$ for some $t_0\in\R$.
The scattering map $\tilde{\sigma}_0$
takes  $\tz^-_0=(z^-_0,t_0)$ into $\tz^+_0=(z^+_0,t_0)$.

We will compute the effect of the perturbation on the scattering map $\tilde{\sigma}_0$.

When we add the perturbation there exists a homoclinic point $\tilde{z}_\eps\in\tGamma_\eps$ corresponding  to $\tilde{z}_0=(z_0,t_0)$ from the unperturbed case, such that $\tz_\eps$ satisfies the condition \textbf{(A-iii-b)}.
 Associated to $\tilde{z}_\eps\in\tGamma_\eps$ we have the points
$\tilde{z}^-_\eps$, $\tilde{z}^+_\eps$ in $\tilde{\Lambda}_\eps$ such that $W^\un(\tilde{z}^-_\eps)\cap \tilde{\Gamma}_\eps=\{\tilde{z}_\eps\}$, and $W^\st(\tilde{z}^+_\eps)\cap \tilde{\Gamma}_\eps=\{\tilde{z}_\eps\}$. The scattering map $\tilde\sigma_\eps$ takes $\tilde{z}^-_\eps\in\tLambda_\eps$ into $\tilde{z}^+_\eps\in\tLambda_\eps$.

In the sequel we will  make  a quantitative comparison between
\[\tilde{z}^-_0\mapsto \tilde{\sigma}_0({z}^-_0):=\tilde{z}^+_0,\]
and
\[\tilde{z}^-_\eps\mapsto \tilde{\sigma}_\eps(\tilde{z}^-_\eps):=\tilde{z}^+_\eps.\]

\subsubsection{Estimates}
Below we will refer to the notation in \eqref{eqn:notation1}, \eqref{eqn:notation2}, \eqref{eqn:notation3}, and \eqref{eqn:notation4}.
To simplify notation, we denote
$I^{\st}_\eps=I^\st(\tz_\eps)$,
$I^{\un}_\eps=I^\un(\tz_\eps)$,
$I^{\st +}_\eps=I^\st(\tz^+_\eps)$,
$I^{\un -}_\eps=I^\un(\tz^-_\eps)$,
$\xi^{\st}_\eps=(x^\st y^\st)(\tz_\eps)$,
$\xi^{\st+}=(x^\st y^\st)(\tz^+_\eps)$,
$\xi^{\un}_\eps=(x^\un y^\un)(\tz_\eps)$,
$\xi^{\un-}=(x^\un y^\un)(\tz^-_\eps)$.

Note that in the following, the coordinates of the scattering map can
be considered as functions of the point.  Hence, the symbols $O(\eps)$
can be interpreted as relative to the $C^r$ norm.

\begin{lem}\label{lem:estimates}
\begin{itemize}
\item[(i)] Estimates on $I$:
\begin{equation}\label{eqn:diff_I_estimates}
\begin{split}
I^{\st+}_\eps-I^{\st}_\eps=O(\eps),\,
I^{\un-}_\eps-I^{\un}_\eps=O(\eps),\,
I^{\st+}_\eps-I^{\un-}_\eps=O(\eps).
\end{split}
\end{equation}
\item[(ii)] Estimates on $h_0$:
\begin{equation}\label{eqn:h_0_estimates}\begin{split}
h_0(I^{\st+}_\eps)-h_0(I^{\st}_\eps)=&(I^{\st+}_\eps - I^{\st}_\eps)\left( \frac{\partial h_0}{\partial I}(I_0)\right) +O(\eps^2),\\
h_0(I^{\un-}_\eps)-h_0(I^{\un}_\eps)=&(I^{\un-}_\eps - I^{\un}_\eps)\left( \frac{\partial h_0}{\partial I}(I_0)\right) +O(\eps^2).
  \end{split}
  \end{equation}
  \item[(iii)] Estimates on $\xi$:
\begin{equation}\label{eqn:xi_estimates}\xi^{\st}_\eps=O(\eps),\, \xi^{\un}_\eps=O(\eps),\, \xi^{\st+}_\eps=O(\eps^2),\,  \xi^{\un-}_\eps=O(\eps^2).
\end{equation}

\item[(iv)] Estimates on $g_1$:
\begin{equation}\label{eqn:g1_estimates}\begin{split}
  g_1(I^{\st+}_\eps)=&g_1(I_0)+\frac{\partial g_1}{\partial I}(I_0)(I^{\st+}_\eps-I_0)+O(\eps^2),\\
  g_1(I^{\un-}_\eps)=&g_1(I_0)+\frac{\partial g_1}{\partial I}(I_0)(I^{\un-}_\eps-I_0)+O(\eps^2),
\end{split}
\end{equation}
\item[(v)] Estimates on $\frac{\partial g_1}{\partial I}$:
\begin{equation}\label{eqn:partial_g1_estimates}\begin{split}
  \frac{\partial g_1}{\partial I}(I^{\st+}_\eps)=&\frac{\partial g_1}{\partial I}(I_0)+\frac{\partial^2 g_1}{\partial I^2}(I_0)(I^{\st+}_\eps-I_0)+O(\eps^2),\\
  \frac{\partial g_1}{\partial I}(I^{\un-}_\eps)=&\frac{\partial g_1}{\partial I}(I_0)+\frac{\partial^2 g_1}{\partial I^2}(I_0)(I^{\un-}_\eps-I_0)+O(\eps^2),
\end{split}
\end{equation}
\end{itemize}
\end{lem}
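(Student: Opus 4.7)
The plan is to establish each estimate in turn using three main ingredients: the $\mathcal{C}^\ell$-closeness of the perturbed objects ($\tLambda_\eps$, $\tGamma_\eps$, and the stable/unstable foliations) to their unperturbed counterparts from the persistence result in Section~5.1, the exact identities (N-iii), (U-iii), (S-iii) satisfied by the unperturbed foliation in normal form coordinates from Proposition~4.1, and the normalization condition \textbf{(A-iii-b)} imposed on the reference homoclinic point $\tz_\eps$.

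For part (i), I would first observe that $\tz_\eps$ is $O(\eps)$-close to $\tz_0$, since $\tGamma_\eps$ is an $O(\eps)$-graph over $(I^\st,\theta^\st,t)$ as recalled in Section~5.1, and similarly over the unstable coordinates. The footpoint $\tz^+_\eps = \Omega^+(\tz_\eps)$ is $O(\eps)$-close to $\tz^+_0$, because $\tLambda_\eps$ is an $O(\eps)$-perturbation of $\tLambda_0$ and the stable foliation depends smoothly on $\eps$. In the unperturbed case, (S-iii) yields $I^\st(\tz_0)=I^\st(\tz^+_0)=I_0$; splitting
\begin{equation*}
I^{\st+}_\eps - I^{\st}_\eps = \bigl[I^\st(\tz^+_\eps)-I^\st(\tz^+_0)\bigr] + \bigl[I^\st(\tz_0)-I^\st(\tz_\eps)\bigr],
\end{equation*}
each bracket is $O(\eps)$ by the smoothness of $I^\st$ as a coordinate function. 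The unstable estimate is symmetric. For the cross estimate, recall that $\tz_\eps$ was chosen to satisfy \textbf{(A-iii-b)}, so $I^\st_\eps - I^\un_\eps = 0$ exactly; chaining the two previous bounds gives the result.

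For part (iii), I would use that on $\tGamma_0$ one has $y^\un = 0$ and $x^\st = 0$ identically by (U-ii) and (S-ii), so on $\tGamma_\eps$ one has $y^\un(\tz_\eps)=O(\eps)$ and $x^\st(\tz_\eps)=O(\eps)$, while the conjugate coordinates $x^\un, y^\st$ stay bounded (and nonzero) at a homoclinic point away from the NHIM; this gives $\xi^\st_\eps, \xi^\un_\eps = O(\eps)$. For the footpoints, both $x$ and $y$ coordinates vanish on $\tLambda_0$ by (S-i), (U-i); since $\tLambda_\eps$ is $O(\eps)$-close to $\tLambda_0$, \emph{both} hyperbolic coordinates are $O(\eps)$ at $\tz^+_\eps$ and $\tz^-_\eps$, producing the quadratic estimates $\xi^{\st+}_\eps, \xi^{\un-}_\eps = O(\eps^2)$.

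Parts (ii), (iv), (v) then reduce to Taylor expansions around $I_0$ of the analytic functions $h_0$, $g_1$, $\partial g_1/\partial I$, combined with the fact that $I^{\st+}_\eps - I_0$ and $I^{\un-}_\eps - I_0$ are $O(\eps)$ (obtained by joining (i) to the $\mathcal{C}^\ell$-closeness of $\tz_\eps$ to $\tz_0$, which gives $I^\st_\eps - I_0 = O(\eps)$ and $I^\un_\eps - I_0 = O(\eps)$). There is no real analytical obstacle here; the main point to watch is the bookkeeping of $O(\eps)$ against $O(\eps^2)$ terms, in particular that when $\partial h_0/\partial I$ is evaluated at $I^\st_\eps$ rather than $I_0$, the resulting $O(\eps)$ discrepancy is multiplied by the $O(\eps)$ increment $I^{\st+}_\eps - I^\st_\eps$ and absorbed into the stated $O(\eps^2)$ remainder. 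All the substantive work is thus front-loaded into the persistence of $\tLambda_\eps$ and $\tGamma_\eps$ together with the normalization (A-iii-b); the rest is Taylor's theorem.
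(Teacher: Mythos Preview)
Your proposal is correct and follows essentially the same route as the paper: part (i) from the $O(\eps)$-closeness of $\tz_\eps,\tz^\pm_\eps$ to $\tz_0,\tz^\pm_0$ together with the unperturbed identities $I^{\st,\un}(\tz_0)=I^{\st,\un}(\tz^\pm_0)=I_0$; part (iii) from the vanishing of \emph{both} hyperbolic coordinates on $\tLambda_0$ (making the linear term of $\xi$ vanish at $\tz^\pm_0$); and parts (ii), (iv), (v) by Taylor expansion around $I_0$ with the $O(\eps)$ increments from (i). The only cosmetic differences are that the paper uses the integral form of the Mean Value Theorem for (ii) and does not need to invoke \textbf{(A-iii-b)} for the cross estimate in (i), since all four action values are already $I_0+O(\eps)$.
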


\begin{proof}

(i)  Since $\tz_\eps=\tz_0+O(\eps)$ and $\tz^\pm_\eps=\tz^\pm_0+O(\eps)$, we have
$I^{\st,\un}(\tz_\eps)=I^{\st,\un}(\tz_0)+O(\eps)$, and
$I^{\st,\un}(\tz^\pm_\eps)=I^{\st,\un}(\tz^\pm_0)+O(\eps)$.
The fact that $I^{\st,\un}(\tz_0)=I^{\st,\un}(\tz^\pm_0)=I_0$  yields the estimates in  (i).

(ii) We estimate the term $h_0(I^{\st+}_\eps)-h_0(I^{\st}_\eps)$.
Applying the integral form of the Mean Value Theorem we have
\begin{equation}\label{eqn:diff_h_0}\begin{split}
 h_0(I^{\st+}_\eps)-h_0(I^{\st}_\eps)=&(I^{\st+}_\eps - I^{\st}_\eps)\int_{0}^{1}\frac{\partial h_0}{\partial I}(t I^{\st+}_\eps+(1-t)I^{\st}_\eps)dt.
\end{split}\end{equation}

We write the integrand of \eqref{eqn:diff_h_0} as a Taylor expansion
\begin{equation}\label{eqn:MVT_h_0}\begin{split}
 \frac{\partial h_0}{\partial I}(t I^{\st+}_\eps+(1-t)I^{\st}_\eps)=&\frac{\partial h_0}{\partial I}(I_0)\\&+
 \frac{\partial^2 h_0}{\partial I^2}(I_0)(t (I^{\st+}_\eps-I^{\st+}_0)+(1-t)(I^{\st}_\eps-I^{\st}_0))\\&+O(\eps^2)
\end{split}\end{equation}
where we used
\[\frac{\partial h_0}{\partial I}(t I^{\st+}_0+(1-t)I^{\st}_0)=\frac{\partial h_0}{\partial I}(I_0)\textrm {
and }
\frac{\partial^2 h_0}{\partial I^2}(t I^{\st+}_0+(1-t)I^{\st}_0)=\frac{\partial^2 h_0}{\partial I^2}(I_0).\]

Since   $I^{\st+}_\eps-I^{\st+}_0=O(\eps)$ and $I^{\st}_\eps-I^{\st}_0=O(\eps)$, we  have
\begin{equation}\label{eqn:diff_h_0_order}
   \frac{\partial h_0}{\partial I}(t I^{\st+}_0+(1-t)I^{\st}_0)=\frac{\partial h_0}{\partial I}(I_0)+O(\eps).
\end{equation}
Since $I^{\st+}_\eps- I^{\st}_\eps=O(\eps)$ from \eqref{eqn:diff_h_0} we obtain the first estimate in \eqref{eqn:h_0_estimates}.

The other estimate follows similarly.

(iii) 
The fact that $\xi^{\st,\un}=O(\eps)$ follows from
$\tz_\eps=\tz_0+O(\eps)$, and
$\xi^{\st,\un}(\tz_0)=0$, hence $\xi^{\st,\un}(\tz_\eps)=\xi^{\st,\un}(\tz_0)+O(\eps)=O(\eps)$.

In the same way we obtain $\xi^{\st+}=\xi^{\un-}=O(\eps)$. To prove that in fact  $\xi^{\st+}=\xi^{\un-}=O(\eps^2)$, we proceed as follows.
For $\xi^{\st+}_\eps$, we use the Taylor expansion:
\begin{equation*}\label{eqn:xi_s_taylor}
\begin{split}
   \xi^{\st+}_\eps=&\xi^{\st+}_0 +D\xi^{\st+}_0\cdot (\xi^{\st+}_\eps-\xi^{\st+}_0)+O(\eps^2),\\
\end{split}
\end{equation*}
where $\cdot$ denotes the dot product, and we used that $\xi^{\st+}_\eps-\xi^{\st+}_0=O(\eps)$.
We have \[\xi^{\st+}_0=0\] and
\[D\xi^{\st+}_0\cdot (\xi^{\st+}_\eps-\xi^{\st+}_0)=x^\st(\tz^+_0)(y^{\st+}_\eps-y^{\st+}_0)+y^\st(\tz^+_0)(x^{\st+}_\eps-x^{\st+}_0)=0,\]
since $y^\st(\tz^+_0)=x^\st(\tz^+_0)=0$ on $\tLambda_0$.
Therefore \eqref{eqn:xi_s_taylor} implies
\begin{equation*}\label{eqn:xi_s_taylor_order}
\begin{split}
   \xi^{\st+}_\eps=&O(\eps^2).
\end{split}
\end{equation*}
Similarly we obtain
\begin{equation*}\label{eqn:xi_u_taylor_order}
\begin{split}
   \xi^{\un-}_\eps=&O(\eps^2).
\end{split}
\end{equation*}

(iv) We write $g_1(I^{\st+}_\eps)$ as a Taylor expansion, using   that  $I^{\st+}_\eps-I^{\st+}_0=O(\eps)$, obtaining
\begin{equation*}\label{eqn:g1_s_taylor}
 g_1(I^{\st+}_\eps)=g_1(I^{\st+}_0)+\frac{\partial g_1}{\partial I}(I^{\st+}_0)(I^{\st+}_\eps-I^{\st+}_0)+O(\eps^2).
\end{equation*}
Since   $I^{\st+}_0=I_0$, the first equation in \eqref{eqn:g1_estimates} follows.

Similarly
\begin{equation*}\label{eqn:g1_u_taylor}
  g_1(I^{\un-}_\eps)=g_1(I^{\un-}_0)+\frac{\partial g_1}{\partial I}(I^{\un-}_0)(I^{\un-}_\eps-I^{\un-}_0)+O(\eps^2),
\end{equation*}
and $I^{\un-}_0=I_0$ yield the second equation in \eqref{eqn:g1_estimates}.

(v) The proof is similar to that of (iv).
\end{proof}

\subsubsection{Change in action by the scattering map}
\label{section:change_in_I}
We now give the expression of the action-component $\tilde{\mathcal{S}}^I$ of the mapping $\tilde{\mathcal{S}}$ in \eqref{eqn:main_sigma}.
Below we will refer to the notation in \eqref{eqn:notation1}, \eqref{eqn:notation2}, \eqref{eqn:notation3}, and \eqref{eqn:notation4}.

\begin{prop}\label{prop:PCRTBP_change_in_I}
The change in $I$ by the scattering map $\tilde{\sigma}_\eps$ is given by
the following formula
\begin{equation}\label{eqn:PCRTBP_I_plus_minus}
\begin{split}
  I\left(\tilde{z}^{+}_{\eps}\right)
  &-I\left( \tilde{z}^{-}_{\eps}\right)\\
 =&-\eps\left(\frac{\partial h_0}{\partial I}(I_0)\right)^{-1}\int_{0}^{+\infty}\left((\X^{1}H_0)
 ({\Phi}^{\tau}_{0}(\tilde{z}^+_{0})
  -(\X^{1}H_0)(\tilde{\Phi}^{\tau}_{0}(\tilde{z}_{0}))\right)d\tau
\\&-\eps\left(\frac{\partial h_0}{\partial I}(I_0)\right)^{-1}\int_{-\infty}^{0}\left((\X^{1}H_0)(\tilde{\Phi}^{\tau}_{0}(\tilde{z}^-_{0}))
  -(\X^{1}H_0)(\tilde{\Phi}^{\tau}_{0}(\tilde{z}_{0}))\right)d\tau\\
  &+O\left(\eps^{2}\right).
  \end{split}
\end{equation}
When   the perturbation is Hamiltonian $\X^1=J\nabla H_1$, in \eqref{eqn:PCRTBP_I_plus_minus}
we have  $\X^{1}H_0=\{H_0,H_1\}$, where $\{\cdot,\cdot\}$ denotes the Poisson bracket.
\end{prop}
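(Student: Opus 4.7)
The plan is to exploit $\X^0 H_0\equiv 0$, so that along any trajectory of the perturbed extended flow one has $\frac{d}{d\tau}H_0(\tilde{\Phi}^\tau_\eps(\tilde z))=\eps\,(\X^1 H_0)(\tilde{\Phi}^\tau_\eps(\tilde z))$. First I would apply this identity starting from $\tilde z_\eps$ and from $\tilde z^+_\eps$, integrate from $0$ to $\tau$, and subtract, obtaining
\[
\bigl[H_0(\tilde{\Phi}^\tau_\eps(\tilde z^+_\eps))-H_0(\tilde{\Phi}^\tau_\eps(\tilde z_\eps))\bigr]-\bigl[H_0(\tilde z^+_\eps)-H_0(\tilde z_\eps)\bigr]=\eps\int_0^\tau\bigl[(\X^1 H_0)(\tilde{\Phi}^s_\eps(\tilde z^+_\eps))-(\X^1 H_0)(\tilde{\Phi}^s_\eps(\tilde z_\eps))\bigr]ds.
\]
Sending $\tau\to+\infty$ and using the stable-fiber estimate \eqref{eqn:convergence_s} to kill the left-hand bracket yields a convergent improper integral formula for $H_0(\tilde z^+_\eps)-H_0(\tilde z_\eps)$. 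A symmetric argument on the unstable side, integrating from $-\infty$ to $0$ and invoking \eqref{eqn:convergence_u}, provides the analogous formula for $H_0(\tilde z^-_\eps)-H_0(\tilde z_\eps)$. Subtracting produces a Melnikov-type formula for $H_0(\tilde z^+_\eps)-H_0(\tilde z^-_\eps)$ in terms of the perturbed flow acting on perturbed points.

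The second step is to replace, up to $O(\eps^2)$ after the explicit $\eps$ prefactor, the perturbed objects $\tilde{\Phi}^s_\eps$, $\tilde z_\eps$, $\tilde z^\pm_\eps$ by their unperturbed counterparts $\tilde{\Phi}^s_0$, $\tilde z_0$, $\tilde z^\pm_0$ inside the integrals. On a bounded time window this follows from Gronwall together with the $\mathcal{C}^\ell$-smooth $\eps$-dependence of $\tilde\Lambda_\eps$ and $\tilde\Gamma_\eps$ established in parts (i)--(ii) of Theorem~\ref{thm:main}. Outside the window, the exponential contraction of the stable fibers (so that the integrand is bounded by $\const\cdot e^{s\lambda_+}$ for $s\ge 0$) and the exponential expansion of the unstable fibers (bound $\const\cdot e^{-s\mu_-}$ for $s\le 0$), both uniform in $\eps$, combined with the Lipschitz character of $\X^1 H_0$ on the compact set $\mathscr{K}$, dominate the tails and yield uniform convergence. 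After this replacement the formula is expressed purely in terms of unperturbed data.

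Finally I would convert the $H_0$-difference into an $I$-difference. Since $\tilde z^\pm_\eps\in\tilde\Lambda_\eps$, Lemma~\ref{lem:estimates}(iii) gives $(xy)(\tilde z^\pm_\eps)=O(\eps^2)$, so the normal form \eqref{eqn:normal_n} implies $H_0(\tilde z^\pm_\eps)=h_0(I(\tilde z^\pm_\eps))+O(\eps^2)$. A first-order Taylor expansion of $h_0$, combined with Lemma~\ref{lem:estimates}(i)--(ii) and the non-degeneracy hypothesis \textbf{(A-iii-a)} that $(\partial h_0/\partial I)(I_0)\ne 0$, yields
\[
I(\tilde z^+_\eps)-I(\tilde z^-_\eps)=\left(\frac{\partial h_0}{\partial I}(I_0)\right)^{-1}\bigl[H_0(\tilde z^+_\eps)-H_0(\tilde z^-_\eps)\bigr]+O(\eps^2),
\]
from which \eqref{eqn:PCRTBP_I_plus_minus} follows. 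The Hamiltonian-case identity $\X^1 H_0=\{H_0,H_1\}$ is immediate from the definition of the Poisson bracket. The hard part will be rigorously justifying the first-order replacement of perturbed objects by unperturbed ones inside the Melnikov integrals with a genuine $O(\eps^2)$ remainder over an infinite time horizon, since naive Gronwall fails on unbounded intervals; the argument must carefully splice bounded-time smooth dependence with the uniform exponential fiber contraction/expansion rates to dominate the tails.
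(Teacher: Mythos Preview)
Your proposal is correct and essentially identical to the paper's proof: the paper packages your first two steps as Lemma~\ref{lem:change_in_H} (proved via the Master Lemmas of Appendix~\ref{sec:master}, which are exactly your integrate-subtract-take-limits and Gronwall-plus-tails arguments), and your third step matches the paper's use of the normal form together with Lemma~\ref{lem:estimates}(ii)--(iii) and condition \textbf{(A-iii-a)}. The difficulty you flag at the end is resolved by a short additional observation: the Gronwall/tail splicing indeed only gives $O(\eps^{1+\varrho})$ with $0<\varrho<1$, but since $I(\tilde z^+_\eps)-I(\tilde z^-_\eps)$ is a $\mathcal{C}^\ell$ function of $\eps$ and hence has a genuine Taylor expansion, matching the first-order coefficients forces the remainder to be $O(\eps^2)$.
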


Proposition \ref{prop:PCRTBP_change_in_I} implies that
\[\tilde\sigma^I_\eps(I,\theta,t)=\tilde\sigma^I_0(I,\theta,t)+\eps \tilde{\mathcal{S}}^I (I,\theta,t)+O(\eps^2),\]
where $\eps \tilde{\mathcal{S}}^I$ is given by the first two terms on the right hand-side of \eqref{eqn:PCRTBP_I_plus_minus}.  The expression of $\eps \tilde{\mathcal{S}}^I$ is particularly simple since $I$ is a slow variable.

We note that we can express the right-hand side of \eqref{eqn:PCRTBP_I_plus_minus} in terms of the $(I^{\un,\st},\theta^{\un,\st},y^{\un,\st},x^{\un,\st})$ coordinates, by making the  following substitutions:
   \begin{equation*}
     \begin{split}
     \tilde{\Phi}^{\tau}_{0}(\tilde{z}^+_{0})=&\left(I^\st_0, \theta^\st_0+\omega(I_0)\tau,0,0,t_0+\tau\right),\\
     \tilde{\Phi}^{\tau}_{0}(\tilde{z}^-_{0})=&\left(I^\st_0, \theta^\un_0+\omega(I_0)\tau,0,0,t_0+\tau\right),\\
     \tilde{\Phi}^{\tau}_{0}(\tilde{z}_{0})=&\left(I^\st_0, \theta^\st_0+\omega(I_0)\tau,y^\st(\tau),0, t_0+\tau \right)\\
     =&\left(I^\un_0, \theta^\un_0+\omega(I_0)\tau,0,x^\un(\tau),t_0+\tau \right).
     \end{split}
   \end{equation*}

To prove Proposition \ref{prop:PCRTBP_change_in_I} we will use the following:
\begin{lem}\label{lem:change_in_H}
The change in $H_0$ by the scattering map $\tilde{\sigma}_\eps$ is given by
the following equation
\begin{equation}\label{eqn:change_in_H}\begin{split}
H_0(\tz^+_\eps)-H_0(\tz^-_\eps)=&-\eps\int_{0}^{+\infty}\left((\X^{1}H_0)
(\tilde{\Phi}^{\tau}_{0}(\tilde{z}^+_{0}))
  -(\X^{1}H_0)(\tilde{\Phi}^{\tau}_{0}(\tilde{z}_{0}))\right)d\tau
\\&-\eps\int_{-\infty}^{0}\left((\X^{1}H_0)(\tilde{\Phi}^{\tau}_{0}(\tilde{z}^-_{0}))
  -(\X^{1}H_0)(\tilde{\Phi}^{\tau}_{0}(\tilde{z}_{0}))\right)d\tau\\
  &+O\left(\eps^{2}\right).
\end{split}
\end{equation}
\end{lem}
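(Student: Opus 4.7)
The strategy is a Melnikov-type computation exploiting that $H_0$ is conserved by the unperturbed Hamiltonian vector field. Since $\X^0 H_0=\{H_0,H_0\}=0$, along any solution of the perturbed extended system one has
\begin{equation*}
\frac{d}{d\tau}H_0\bigl(\tPhi^\tau_\eps(\tz)\bigr)\;=\;\eps\,(\X^{1}H_0)\bigl(\tPhi^\tau_\eps(\tz)\bigr).
\end{equation*}
I would apply this identity along the homoclinic orbit of $\tz_\eps$ and along the orbits of its foot-points $\tz^\pm_\eps\in\tLambda_\eps$, subtract, and integrate, using the exponential convergence of homoclinic trajectories to their foot-points given by \eqref{eqn:convergence_s} and \eqref{eqn:convergence_u} to kill the boundary terms at $\pm\infty$.

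Concretely, setting $\Delta_+(\tau):=H_0(\tPhi^\tau_\eps(\tz_\eps))-H_0(\tPhi^\tau_\eps(\tz^+_\eps))$, the identity above gives
\begin{equation*}
\frac{d}{d\tau}\Delta_+(\tau)=\eps\bigl[(\X^{1}H_0)(\tPhi^\tau_\eps(\tz_\eps))-(\X^{1}H_0)(\tPhi^\tau_\eps(\tz^+_\eps))\bigr].
\end{equation*}
By \eqref{eqn:convergence_s} and continuity of $H_0$ one has $\Delta_+(\tau)\to 0$ as $\tau\to+\infty$, while the integrand is absolutely integrable on $[0,+\infty)$ since \textbf{(A-iv)} makes $\X^1 H_0$ Lipschitz on $\mathscr{K}$ and the separation $d(\tPhi^\tau_\eps(\tz_\eps),\tPhi^\tau_\eps(\tz^+_\eps))$ decays exponentially. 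Integrating from $0$ to $+\infty$ produces
\begin{equation*}
H_0(\tz^+_\eps)-H_0(\tz_\eps)=\eps\int_0^{+\infty}\bigl[(\X^{1}H_0)(\tPhi^\tau_\eps(\tz_\eps))-(\X^{1}H_0)(\tPhi^\tau_\eps(\tz^+_\eps))\bigr]\,d\tau.
\end{equation*}
An entirely analogous argument on $(-\infty,0]$, this time invoking \eqref{eqn:convergence_u}, yields the matching expression for $H_0(\tz_\eps)-H_0(\tz^-_\eps)$. Adding the two gives an exact identity for $H_0(\tz^+_\eps)-H_0(\tz^-_\eps)$ written in terms of the perturbed flow $\tPhi^\tau_\eps$ and the perturbed foot-points $\tz^\pm_\eps$.

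The remaining and technically most delicate step is to replace $\tPhi^\tau_\eps$ by $\tPhi^\tau_0$ and $\tz^\pm_\eps$ by $\tz^\pm_0$ inside the improper integrals, absorbing the resulting correction into the $O(\eps^2)$ remainder. Because the integrands already carry an explicit factor of $\eps$, it suffices to show that the integrand-difference between the perturbed and unperturbed versions is $O(\eps)$ in an $L^1(d\tau)$ sense. I would argue this via the standard splitting at $|\tau|=T_\eps:=C|\log\eps|$: on the core window $|\tau|\le T_\eps$, continuous dependence of the flow on $\eps$ together with the $\mathcal{C}^r$-smoothness of $\X^1H_0$ bounds the integrand-difference pointwise by $O(\eps)$ up to a mildly growing factor, while on the tails $|\tau|>T_\eps$ each individual integrand already decays like $e^{-c|\tau|}\le\eps^c$ thanks to the hyperbolic rates that persist under perturbation (Section \ref{sec:perturbed_NHIM}), so the tail contribution is $O(\eps^c)$ for any desired $c$. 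Choosing $C$ so that both pieces are $O(\eps^2)$ closes the estimate. This Melnikov-closure step is the main obstacle; it is of the same type already deployed in \cite{gidea2021global} for the analogous non-conservative perturbation of the rotator--pendulum model, and once it is in place the lemma follows.
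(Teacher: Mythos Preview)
Your overall approach is the same as the paper's: compute $H_0(\tz^+_\eps)-H_0(\tz_\eps)$ and $H_0(\tz_\eps)-H_0(\tz^-_\eps)$ by the Fundamental Theorem of Calculus using $\X^0 H_0=0$ and the exponential decay along stable/unstable fibers, then replace the perturbed flow and foot-points by the unperturbed ones via a Gronwall/log-time splitting. In the paper these three steps are packaged as the ``Master Lemmas'' in Appendix~\ref{sec:master}; you are essentially re-deriving Lemmas~\ref{lem:master_1}, \ref{lem:master_2}, \ref{lem:master_4} inline.

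There is, however, a genuine gap in your last step. The log-time splitting \emph{cannot} be tuned to make both the core and the tail contributions $O(\eps^2)$. With $T_\eps=C|\log\eps|$, Gronwall gives a core error of order $\eps\cdot\int_0^{T_\eps}\eps\,e^{C_0\tau}\,d\tau\sim\eps^{2-C_0 C}$, while the tail is of order $\eps\cdot e^{-\lambda T_\eps}\sim\eps^{1+\lambda C}$; making the tail $O(\eps^2)$ forces $C\ge 1/\lambda$, which then makes the core only $O(\eps^{2-C_0/\lambda})$. The best one obtains this way is $O(\eps^{1+\varrho})$ with $\varrho=\lambda/(C_0+\lambda)<1$, exactly as in the paper's Master Lemma~\ref{lem:master_4}.

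The paper closes this gap by a separate argument you omit: since $\tz^\pm_\eps$ depend $\mathcal{C}^\ell$-smoothly on $\eps$ (from the smooth persistence of the NHIM, its stable/unstable manifolds, and the transverse intersection), the left-hand side $H_0(\tz^+_\eps)-H_0(\tz^-_\eps)$ admits a genuine Taylor expansion in $\eps$. Matching the expansion against the Melnikov formula with $O(\eps^{1+\varrho})$ remainder identifies the first-order coefficient with the Melnikov integral and forces the remainder to be $O(\eps^2)$. You should either invoke this smoothness-and-matching argument explicitly, or weaken the stated remainder to $O(\eps^{1+\varrho})$.
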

\begin{proof}[Proof of Lemma \ref{lem:change_in_H}]
First, note that \[(\X^0+\eps \X^1)H_0=\X^0H_0+\eps \X^1H_0=\{H_0,H_0\}+\eps \X^1H_0=\eps \X^1H_0.\]

Second, applying Lemma \ref{lem:master_1} and Lemma \ref{lem:master_2} from Appendix \ref{sec:master}, for $\bF=H_0$, we have
\begin{equation}\label{eqn:H0s_int}\begin{split}
H_0(\tz^+_\eps)-H_0(\tz_\eps)=&-\eps\int_{0}^{+\infty}\left((\X^{1}H_0)(\tilde{\Phi}^{\tau}_{0}
(\tilde{z}^+_{0}))
  -(\X^{1}H_0)(\tilde{\Phi}^{\tau}_{0}(\tilde{z}_{0}))\right)d\tau\\
 &+O\left(\eps^{1+\varrho}\right)
\end{split}\end{equation}
\begin{equation}\label{eqn:H0u_int} \begin{split}
H_0(\tz^-_\eps)-H_0(\tz_\eps)=&\eps\int_{-\infty}^{0}\left((\X^{1}H_0)
(\tilde{\Phi}^{\tau}_{0}(\tilde{z}^-_{0}))
  -(\X^{1}H_0)(\tilde{\Phi}^{\tau}_{0}(\tilde{z}_{0}))\right)d\tau\\
  &+O\left(\eps^{1+\varrho}\right).
\end{split}\end{equation}
Subtracting the two equations from above, after cancelling out the common term $H_0(\tz_\eps)$ representing the value of $H_0$ at the homoclinic point $\tz_0$, we obtain  \eqref{eqn:change_in_H} with an error term of order $O(\eps^{1+\varrho})$. Since the function $H_0(\tz^+_\eps)-H_0(\tz^-_\eps)$ can be expanded as a Taylor series in $\eps$, by matching the corresponding terms of this Taylor  expansion with the terms in \eqref{eqn:H0s_int} minus \eqref{eqn:H0u_int}, it follows that the error term  $O(\eps^{1+\varrho})$    must equal $O(\eps^2)$.
\end{proof}

\begin{proof}[Proof of Proposition \ref{prop:PCRTBP_change_in_I}]


By Proposition \ref{prop:normal_form} we have
\begin{equation}\label{eqn:H0s}\begin{split}
H_0(\tz^+_\eps)=&h_0(I^{\st+}_\eps)+(\xi^{\st+}_\eps) g_1(I^{\st+}_\eps)+(\xi^{\st+}_\eps)^2g_2(I^{\st+}_\eps,\xi^{\st+}_\eps),
\end{split}\end{equation}
\begin{equation}\label{eqn:H0u}\begin{split}
H_0(\tz^-_\eps)=&h_0(I^{\un-}_\eps)+(\xi^{\un-}_\eps) g_1(I^{\un-}_\eps)+(\xi^{\un-}_\eps)^2g_2(I^{\un-}_\eps,\xi^{\un-}_\eps).
\end{split}\end{equation}
Subtracting we obtain
\begin{equation}\label{eqn:diff_H_0_h_0}\begin{split}
H_0(\tz^+_\eps)-H_0(\tz^-_\eps)=&(h_0(I^{\st+}_\eps)-h_0(I^{\un-}_\eps))\\&+((\xi^{\st+}_\eps) g_1(I^{\st+}_\eps)-(\xi^{\un-}_\eps) g_1(I^{\un-}_\eps))\\&+O(\eps^2)
\end{split}\end{equation}
where the error term $O(\eps^2)$ in the above is due to Lemma \ref{lem:estimates} equation \eqref{eqn:xi_estimates}.

The term $h_0(I^{\st+}_\eps)-h_0(I^{\un-}_\eps)$ in \eqref{eqn:diff_H_0_h_0} is given, by Lemma \ref{lem:estimates} equation \eqref{eqn:h_0_estimates}, as
\begin{equation}\label{eqn:diff_h_0_OO}
  h_0(I^{\st+}_\eps)-h_0(I^{\un-}_\eps)=(I^{\st+}_\eps - I^{\un-}_\eps)\left( \frac{\partial h_0}{\partial I}(I_0)\right) +O(\eps^2).
\end{equation}

 Since, by Lemma \ref{lem:estimates} equation \eqref{eqn:xi_estimates}, we have
\begin{equation}\label{eqn:xi__taylor_order}
\begin{split}
   \xi^{\st+}_\eps=O(\eps^2),\, \xi^{\un-}_\eps=O(\eps^2).
\end{split}
\end{equation}
we obtain
\begin{equation}(\xi^{\st+}_\eps) g_1(I^{\st+}_\eps)-(\xi^{\un-}_\eps) g_1(I^{\un-}_\eps)=O(\eps^2).\end{equation}



Thus, from \eqref{eqn:diff_H_0_h_0} and using \eqref{eqn:diff_h_0_OO} we have
\begin{equation}\label{diff_H_0_OO}
  H_0(\tz^+_\eps)-H_0(\tz^-_\eps)=(I^{\st+}_\eps - I^{\un-}_\eps)\left( \frac{\partial h_0}{\partial I}(I)\right) +O(\eps^2).
\end{equation}

As the left-hand side of \eqref{diff_H_0_OO} is given by \eqref{eqn:change_in_H}, since $\frac{\partial h_0}{\partial I}\neq 0$ by condition
\textbf{(A-iii-a)},
solving for $I^{\st+}_\eps - I^{\un-}_\eps$ yields
\begin{equation}\label{eqn:I_eps_change}
  \begin{split}
     I^{\st+}_\eps - I^{\un-}_\eps =&\left( \frac{\partial h_0}{\partial I}(I)\right)^{-1}\left(H_0(\tz^+_\eps)-H_0(\tz^-_\eps)\right) \\
       =&-\eps\left(\frac{\partial h_0}{\partial I}(I)\right)^{-1}\int_{0}^{+\infty}\left((\X^{1}H_0)\left(\tilde{\Phi}^{\tau}_{0}\left(\tilde{z}^+_{0}\right)\right)
  -(\X^{1}H_0)\left(\tilde{\Phi}^{\tau}_{0}\left(\tilde{z}_{0}\right)\right)\right)d\tau
\\&-\eps\left(\frac{\partial h_0}{\partial I}(I)\right)^{-1}\int_{-\infty}^{0}\left((\X^{1}H_0)\left(\tilde{\Phi}^{\tau}_{0}\left(\tilde{z}^-_{0}\right)\right)
  -(\X^{1}H_0)\left(\tilde{\Phi}^{\tau}_{0}\left(\tilde{z}_{0}\right)\right)\right)d\tau\\
  &+O\left(\eps^{1+\varrho}\right).
  \end{split}
\end{equation}

By the same argument as in the proof of Lemma \ref{lem:change_in_H}, the error term  $O(\eps^{1+\varrho})$ in \eqref{eqn:I_eps_change}  must equal $O(\eps^2)$.  This shows \eqref{eqn:PCRTBP_I_plus_minus}.
\end{proof}

\begin{prop}
\begin{equation}\label{eqn:change_I_s_int_0}\begin{split}
I^{\st+}_\eps-&I^{\st}_\eps=\eps\left(\frac{\partial h_0}{\partial I}(I_0)) \right)^{-1}\int^{+\infty}_{0}\left((\X^{1}H_0)(\tilde{\Phi}^{\tau}_{0}
(\tilde{z}^+_{0}))
-(\X^{1}H_0)(\tilde{\Phi}^{\tau}_{0}(\tilde{z}_{0}))\right)d\tau\\
  &-\eps \left(\frac{\partial h_0}{\partial I}(I_0)) \right)^{-1} g_1(I_0)\int^{+\infty}_{0}\left((\X^{1}\xi^\st)(\tilde{\Phi}^{\tau}_{0}
(\tilde{z}^+_{0})) -(\X^{1}\xi^\st)(\tilde{\Phi}^{\tau}_{0}(\tilde{z}_{0}))\right)d\tau\\
 &+O\left(\eps^{2}\right).
 \end{split}\end{equation}

\begin{equation}\label{eqn:change_I_u_int_0}\begin{split}
I^{\un-}_\eps-&I^{\un}_\eps=\eps\left(\frac{\partial h_0}{\partial I}(I_0)) \right)^{-1}\int_{-\infty}^{0}\left((\X^{1}H_0)(\tilde{\Phi}^{\tau}_{0}
(\tilde{z}^-_{0}))
-(\X^{1}H_0)(\tilde{\Phi}^{\tau}_{0}(\tilde{z}_{0}))\right)d\tau\\
  &-\eps \left(\frac{\partial h_0}{\partial I}(I_0)) \right)^{-1} g_1(I_0)\int_{-\infty}^{0}\left((\X^{1}\xi^\un)(\tilde{\Phi}^{\tau}_{0}
(\tilde{z}^-_{0})) -(\X^{1}\xi^\un)(\tilde{\Phi}^{\tau}_{0}(\tilde{z}_{0}))\right)d\tau\\
 &+O\left(\eps^{2}\right).
\end{split}\end{equation}
\end{prop}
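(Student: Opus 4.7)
The plan is to combine the normal form expansion of $H_0$ with two applications of the master lemmas from Appendix \ref{sec:master}: one for $\bF = H_0$ (already carried out in Lemma \ref{lem:change_in_H}) and a new one for $\bF = \xi^\st$. The key identity that isolates $I^{\st+}_\eps - I^{\st}_\eps$ comes from Taylor-expanding the normal form \eqref{eqn:normal_st} at the two nearby points $\tz^+_\eps$ and $\tz_\eps$ and solving the resulting scalar relation for $I^{\st+}_\eps - I^{\st}_\eps$ with the integral representations supplied by the master lemmas as input.

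Concretely, I would first repeat the expansion \eqref{eqn:diff_H_0_h_0}--\eqref{eqn:diff_h_0_OO}: substituting \eqref{eqn:normal_st} at $\tz^+_\eps$ and $\tz_\eps$, invoking the estimates $\xi^{\st+}_\eps = O(\eps^2)$ and $\xi^{\st}_\eps = O(\eps)$ from Lemma \ref{lem:estimates}(iii), and Taylor expanding $h_0$ and $g_1$ about $I_0$, one obtains
\begin{equation*}
H_0(\tz^+_\eps) - H_0(\tz_\eps) = (I^{\st+}_\eps - I^{\st}_\eps)\frac{\partial h_0}{\partial I}(I_0) - \xi^{\st}_\eps\, g_1(I_0) + O(\eps^2).
\end{equation*}
The $H_0$ difference on the left already admits an improper integral representation, namely \eqref{eqn:H0s_int}. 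To rewrite the $\xi^{\st}_\eps$ term I would apply the master lemma to $\bF = \xi^\st$: both unperturbed reference orbits $\tPhi^\tau_0(\tz^+_0)\subset \tLambda_0$ and $\tPhi^\tau_0(\tz_0)\subset W^\st(\tLambda_0)$ (for $\tau\ge 0$) lie in the invariant set $\{\xi^\st=0\}$, so $\xi^\st$ and $\X^0\xi^\st$ vanish identically along both of them. Together with $\xi^{\st+}_\eps = O(\eps^2)$, this produces an integral expression for $\xi^\st_\eps$ in terms of $\X^1\xi^\st$ evaluated along the same two orbits. Substituting into the Taylor identity, dividing by $\partial h_0/\partial I(I_0)$ (nonzero by \textbf{(A-iii-a)}), and promoting the error from $O(\eps^{1+\varrho})$ to $O(\eps^2)$ by the Taylor-matching argument used at the end of the proof of Lemma \ref{lem:change_in_H}, gives \eqref{eqn:change_I_s_int_0}. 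The companion identity \eqref{eqn:change_I_u_int_0} is obtained by the parallel argument in the coordinates $(I^\un,\theta^\un,y^\un,x^\un)$, using the normal form \eqref{eqn:normal_un} together with the backward master lemma \eqref{eqn:H0u_int}, integrating over $(-\infty, 0]$ rather than $[0,+\infty)$.

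The main obstacle is the application of the master lemma to $\bF = \xi^\st$: unlike $H_0$, this function is not a first integral of $\X^0$ in the full neighborhood $\mathscr{N}^\st$ (one only has $\X^0\xi^\st = O((\xi^\st)^2)$). What is actually required, however, is that $\X^0\xi^\st$ vanish along the \emph{specific} unperturbed orbits used in the integration, and this is guaranteed by the $\Phi^\tau_0$-invariance of $\tLambda_0$ and $W^\st(\tLambda_0)$ together with the identical vanishing of $\xi^\st$ on both manifolds. The exponential convergence \eqref{eqn:convergence_s} of $\tPhi^\tau_0(\tz_0)$ to $\tPhi^\tau_0(\tz^+_0)$ as $\tau \to +\infty$, combined with the uniform $\mathcal{C}^r$ bound on $\X^1$ over the compact set $\mathscr{K}$ from condition \textbf{(A-iv)}, makes $\X^1\xi^\st$ uniformly Lipschitz on $\mathscr{K}$, so the improper integral is absolutely convergent at the same geometric rate as the corresponding one for $\X^1 H_0$; no analytic input beyond what Lemma \ref{lem:change_in_H} already uses is needed.
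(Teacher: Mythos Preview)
Your proposal is correct and follows essentially the same route as the paper: expand $H_0$ in the stable normal form at $\tz^+_\eps$ and $\tz_\eps$, use Lemma~\ref{lem:estimates} to isolate $(I^{\st+}_\eps - I^{\st}_\eps)\,\partial_I h_0(I_0)$ and $(\xi^{\st+}_\eps-\xi^{\st}_\eps)\,g_1(I_0)$, feed in the integral representations from the master lemmas for $\bF=H_0$ and $\bF=\xi^\st$, and upgrade the $O(\eps^{1+\varrho})$ error to $O(\eps^2)$ by Taylor matching. One small correction to your discussion of the ``main obstacle'': in the normal form coordinates $\xi^\st=x^\st y^\st$ is an \emph{exact} first integral of $\X^0$ on all of $\mathscr{N}^\st$ (this is noted in the proof of Proposition~\ref{prop:normal_form}), so $\X^0\xi^\st\equiv 0$ rather than merely $O((\xi^\st)^2)$, and your workaround, while correct, is unnecessary.
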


\begin{proof}
The formula for
$H_0(\tz^+_\eps)-H_0(\tz_\eps)$ is given by \eqref{eqn:H0s_int}.
The normal form expansion of $H_0$ at a homoclinic point $\tz_\eps$ can be written with respect to the two sets of coordinates as
\begin{equation}\label{eqn:H0s_hom}
H_0(\tz _\eps)=h_0(I^{\st }_\eps)+(\xi^{\st }_\eps) g_1(I^{\st }_\eps)+(\xi^{\st }_\eps)^2g_2(I^{\st }_\eps,\xi^{\st }_\eps)\end{equation}
\begin{equation}\label{eqn:H0u_hom}
H_0(\tz _\eps)=  h_0(I^{\un}_\eps)+(\xi^{\un }_\eps) g_1(I^{\un }_\eps)+(\xi^{\un}_\eps)^2g_2(I^{\un }_\eps,\xi^{\un }_\eps)
\end{equation}
Subtracting \eqref{eqn:H0s_hom} from \eqref{eqn:H0s} we obtain
\begin{equation}\label{eqn:H0s_hom_diff}\begin{split}
H_0(\tz^+ _\eps)-H_0(\tz _\eps)=& h_0(I^{\st+}_\eps)-h_0(I^{\st }_\eps)+(\xi^{\st+}_\eps) g_1(I^{\st+}_\eps)-(\xi^{\st }_\eps) g_1(I^{\st }_\eps)+O(\eps^2)\\
=& \left (\frac{\partial h_0}{\partial I}(I_0)\right)(I^{\st+}_\eps - I^{\st }_\eps) \\ &+(\xi^{\st+}_\eps)(g_1(I^{\st+}_\eps)-g_1(I^{\st }_\eps))\\&+ (\xi^{\st+}_\eps-\xi^{\st }_\eps) g_1(I^{\st}_\eps) +O(\eps^2)\\
=&\left (\frac{\partial h_0}{\partial I}(I_0)\right)(I^{\st+}_\eps - I^{\st }_\eps) \\
&+ (\xi^{\st+}_\eps-\xi^{\st }_\eps) g_1(I_0) +O(\eps^2)
\end{split}\end{equation}
In the above we have used  Lemma \ref{lem:estimates}, equations \eqref{eqn:h_0_estimates}, \eqref{eqn:xi_estimates}, and \eqref{eqn:g1_estimates}.

Applying Lemma \ref{lem:master_1} and Lemma \ref{lem:master_2} from Appendix \ref{sec:master}, for $\bF=\xi^\st$, we have
\begin{equation}\label{eqn:xi_s_int}\begin{split}
\xi^{\st+}_\eps-\xi^{\st}_\eps=&-\eps\int_{0}^{+\infty}\left((\X^{1}\xi^\st)\left(\tilde{\Phi}^{\tau}_{0}
\left(\tilde{z}^+_{0}\right)
\right)
  -(\X^{1}\xi^\st)\left(\tilde{\Phi}^{\tau}_{0}\left(\tilde{z}_{0}\right)\right)\right)d\tau\\
 &+O\left(\eps^{1+\varrho}\right).
\end{split}\end{equation}

Thus, using \eqref{eqn:H0s_hom_diff}, \eqref{eqn:H0s_int}, and \eqref{eqn:xi_s_int}
we obtain \eqref{eqn:change_I_s_int_0}.

Equation \eqref{eqn:change_I_u_int_0}  follows similarly.

The argument that the error term $O(\eps^{1+\varrho})$ can be replaced by $O(\eps^2)$ follows in the same way as in
the proof of Lemma \ref{lem:change_in_H}.
\end{proof}

Let $\tau\in\R$ be some value of the time variable.
Applying the formula \eqref{eqn:change_I_s_int_0} to $\tPhi^\tau_\eps(\tilde{z}^{+}_{\eps})$ and $\tPhi^\tau_\eps(\tilde{z}_{\eps})$ instead $\tilde{z}^{+}_{\eps}$ and $\tilde{z}_{\eps}$, respectively,
and the formula \eqref{eqn:change_I_u_int_0}  to $\tPhi^\tau_\eps(\tilde{z}^{-}_{\eps})$ and $\tPhi^\tau_\eps(\tilde{z}_{\eps})$ instead $\tilde{z}^{-}_{\eps}$ and $\tilde{z}_{\eps}$, respectively,
we obtain:

\begin{cor}\label{cor:cor} For any time $\tau\in \R$ we have
\begin{equation}\label{eqn:PCTBP_I_plus_sigma_half}\begin{split}
I^{\st}_\eps(\tPhi^\tau(\tz_\eps^+))-&I^{\st}_\eps(\tPhi^\tau(\tz_\eps))\\=&
-\eps\left(\frac{\partial h_0}{\partial I}(I_0)) \right)^{-1}\int_{0}^{+\infty}\left((\X^{1}H_0)\left(\tilde{\Phi}^{\tau+\varsigma}_{0}
\left(\tilde{z}^+_{0}\right)\right)\right.\\
&\qquad\qquad\qquad\qquad\left.  -(\X^{1}H_0)\left(\tilde{\Phi}^{\tau+\varsigma}_{0}\left(\tilde{z}_{0}\right)\right)\right)d\varsigma\\
  &+\eps \left(\frac{\partial h_0}{\partial I}(I_0)) \right)^{-1} g_1(I_0)\int_{0}^{+\infty}\left((\X^{1}\xi^\st)\left(\tilde{\Phi}^{\tau+\varsigma}_{0}
\left(\tilde{z}^+_{0}\right)
\right)\right.\\
&\qquad\qquad\qquad\qquad\left.  -(\X^{1}\xi^\st)\left(\tilde{\Phi}^{\tau+\varsigma}_{0}\left(\tilde{z}_{0}\right)\right)\right)d\varsigma\\
 &+O\left(\eps^{2}\right).\end{split}\end{equation}
 \begin{equation}\label{eqn:PCTBP_I_minus_sigma_half}\begin{split}
I^{\un}_\eps(\tPhi^\tau(\tz_\eps^-))-&I^{\un}_\eps(\tPhi^\tau(\tz_\eps))\\=&
\eps\left(\frac{\partial h_0}{\partial I}(I_0)) \right)^{-1}\int_{-\infty}^{0}\left((\X^{1}H_0)\left(\tilde{\Phi}^{\tau+\varsigma}_{0}
\left(\tilde{z}^-_{0}\right)
\right)\right.\\
&\qquad\qquad\qquad\qquad\left.   -(\X^{1}H_0)\left(\tilde{\Phi}^{\tau+\varsigma}_{0}\left(\tilde{z}_{0}\right)\right)\right)d\varsigma\\
  &-\eps \left(\frac{\partial h_0}{\partial I}(I_0)) \right)^{-1}g_1(I_0)\int_{-\infty}^{0}\left((\X^{1}\xi^\un)\left(\tilde{\Phi}^{\tau+\varsigma}_{0}
\left(\tilde{z}^-_{0}\right)
\right)\right.\\
&\qquad\qquad\qquad\qquad\qquad\left.
  -(\X^{1}\xi^\un)\left(\tilde{\Phi}^{\tau+\varsigma}_{0}\left(\tilde{z}_{0}\right)\right)\right)d\varsigma\\
 &+O\left(\eps^{2}\right).
\end{split}\end{equation}
\end{cor}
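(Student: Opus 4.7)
The plan is to reduce the corollary to the previous proposition via the equivariance of the footpoint projections $\Omega^{\pm}$ under the flow. By the flow-invariance of the stable and unstable foliations (the equivariance property \eqref{eqn:equivariance} used earlier in Proposition \ref{prop:mismatch}), we have
\begin{equation*}
\Omega^{-}\bigl(\tPhi^{\tau}_\eps(\tz_\eps)\bigr) = \tPhi^{\tau}_\eps(\tz^{-}_\eps), \qquad \Omega^{+}\bigl(\tPhi^{\tau}_\eps(\tz_\eps)\bigr) = \tPhi^{\tau}_\eps(\tz^{+}_\eps),
\end{equation*}
so the point $\tPhi^{\tau}_\eps(\tz_\eps)$ plays exactly the role of a new homoclinic point whose stable and unstable footpoints are $\tPhi^{\tau}_\eps(\tz^{+}_\eps)$ and $\tPhi^{\tau}_\eps(\tz^{-}_\eps)$, respectively.

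I would then apply the two integral formulas \eqref{eqn:change_I_s_int_0} and \eqref{eqn:change_I_u_int_0} from the preceding proposition with $\tz_\eps$ replaced by $\tPhi^{\tau}_\eps(\tz_\eps)$ and $\tz^{\pm}_\eps$ replaced by $\tPhi^{\tau}_\eps(\tz^{\pm}_\eps)$. In those formulas, the unperturbed counterpart of the base homoclinic point $\tz_\eps$ was $\tz_0$; correspondingly, the unperturbed counterpart of $\tPhi^{\tau}_\eps(\tz_\eps)$ is $\tPhi^{\tau}_0(\tz_0)$, and the unperturbed footpoints are $\tPhi^{\tau}_0(\tz^{\pm}_0)$. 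Substituting these into the integrals and using the flow-composition identity $\tPhi^{\varsigma}_0 \circ \tPhi^{\tau}_0 = \tPhi^{\tau+\varsigma}_0$ immediately yields the expressions \eqref{eqn:PCTBP_I_plus_sigma_half} and \eqref{eqn:PCTBP_I_minus_sigma_half}, with the integration variable of the proposition renamed to $\varsigma$.

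The only point that requires care is uniformity in $\tau$ of the $O(\eps^2)$ error. The proof of the previous proposition relied on (a) the Taylor expansions in Lemma \ref{lem:estimates}, (b) the normal form expansion from Proposition \ref{prop:normal_form}, and (c) the estimates of Lemma \ref{lem:master_1} and Lemma \ref{lem:master_2} providing exponential convergence of the integrals. All three ingredients remain valid at $\tPhi^{\tau}_\eps(\tz_\eps)$: the normal form coordinate systems $(I^{\un,\st},\theta^{\un,\st},y^{\un,\st},x^{\un,\st})$ were constructed in Section \ref{sec:coordinates} by transporting the local normal form along the unperturbed flow, so they are defined in a neighborhood of the entire unperturbed homoclinic orbit; and the exponential decay in the Master Lemmas depends only on the hyperbolicity rates $\lambda_-,\mu_-$, which are uniform along the homoclinic orbit.

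The main (minor) obstacle, then, is bookkeeping: one must verify that $\tPhi^{\tau}_\eps(\tz_\eps)$ stays $O(\eps)$-close to $\tPhi^{\tau}_0(\tz_0)$ uniformly on the relevant compact time window (which follows from Gronwall together with the fact that the homoclinic orbits spend the bulk of the time near the NHIM where the hyperbolicity cooperates with the perturbation estimate), and that the resulting $O(\eps^{1+\varrho})$ error term may be upgraded to $O(\eps^{2})$ by the same Taylor-matching argument used at the end of the proof of Lemma \ref{lem:change_in_H}. No new analytic content is required beyond what is already in the preceding proposition.
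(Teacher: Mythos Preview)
Your proposal is correct and follows essentially the same approach as the paper: the paper's proof of the corollary is a single sentence stating that one applies \eqref{eqn:change_I_s_int_0} and \eqref{eqn:change_I_u_int_0} to the flow-shifted points $\tPhi^\tau_\eps(\tz^\pm_\eps)$ and $\tPhi^\tau_\eps(\tz_\eps)$ in place of $\tz^\pm_\eps$ and $\tz_\eps$, which is exactly what you do after invoking the equivariance of $\Omega^\pm$. Your additional remarks on the uniformity of the $O(\eps^2)$ error in $\tau$ go a bit beyond what the paper spells out, but they are consistent with the argument and do not change the route.
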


\begin{cor}\label{cor:cancellation}
\begin{equation}\label{eqn:cancellation}\begin{split}
&\int_{0}^{+\infty}\left((\X^{1}\xi^\st)(\tilde{\Phi}^{\tau}_{0}
(\tilde{z}^+_{0})) -(\X^{1}\xi^\st)(\tilde{\Phi}^{\tau}_{0}(\tilde{z}_{0}))\right)d\tau\\
&\qquad -\int_{-\infty}^{0}\left((\X^{1}\xi^\un)(\tilde{\Phi}^{\tau}_{0}
(\tilde{z}^-_{0})) -(\X^{1}\xi^\un)(\tilde{\Phi}^{\tau}_{0}(\tilde{z}_{0}))\right)d\tau=0.
\end{split}\end{equation}
\end{cor}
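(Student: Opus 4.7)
The idea is to compute the quantity $I(\tz^+_\eps)-I(\tz^-_\eps)$ in two different ways and then compare the $\eps$-coefficients of the resulting expansions.

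\emph{Step 1: Set-up via the matching condition.} By the choice of the perturbed homoclinic point $\tz_\eps$ in Section \ref{sec:peristence_transverse} (see \eqref{eqn:theta_st_minus_theta_un}), we have the crucial identity
\[
I^\st(\tz_\eps)-I^\un(\tz_\eps)=0,
\]
i.e.\ $I^\st_\eps=I^\un_\eps$. Using this, I can write the telescoping decomposition
\[
I^{\st+}_\eps-I^{\un-}_\eps=\bigl(I^{\st+}_\eps-I^\st_\eps\bigr)-\bigl(I^{\un-}_\eps-I^\un_\eps\bigr).
\]

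\emph{Step 2: Two expressions for $I^{\st+}_\eps-I^{\un-}_\eps$.}
On the one hand, recalling that $I^{\st+}_\eps=I(\tz^+_\eps)$ and $I^{\un-}_\eps=I(\tz^-_\eps)$ (because $x=y=0$ on $\Lambda_0$, so the action coordinates $I$, $I^\st$, $I^\un$ agree on $\Lambda_0$), Proposition \ref{prop:PCRTBP_change_in_I} furnishes a closed expression for $I^{\st+}_\eps-I^{\un-}_\eps$ involving only the integrals of $\X^1 H_0$ along the unperturbed homoclinic trajectory through $\tz^+_0$, $\tz^-_0$, and $\tz_0$. On the other hand, substituting the formulas \eqref{eqn:change_I_s_int_0} and \eqref{eqn:change_I_u_int_0} from Proposition~5.4 into the decomposition from Step~1 yields a second expression, containing both the $\X^1 H_0$ integrals and the $g_1$-weighted integrals of $\X^1 \xi^\st$ and $\X^1 \xi^\un$.

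\emph{Step 3: Comparing coefficients of $\eps$.} The two expressions for $I^{\st+}_\eps-I^{\un-}_\eps$ must agree up to $O(\eps^2)$. The $\X^1 H_0$ integrals occurring in both expressions are identical (they involve the same forward/backward time integrals along the unperturbed orbit), so they cancel on subtraction. What remains is an equation of the form
\[
0=\eps\left(\frac{\partial h_0}{\partial I}(I_0)\right)^{-1}g_1(I_0)\,\Bigl(\mathcal{I}^\st-\mathcal{I}^\un\Bigr)+O(\eps^2),
\]
where $\mathcal{I}^\st$ and $\mathcal{I}^\un$ are precisely the two convergent integrals appearing in \eqref{eqn:cancellation}. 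Since by condition \textbf{(A-iii-a)} we have $\frac{\partial h_0}{\partial I}(I_0)\neq 0$ and $g_1(I_0)\neq 0$, and since the identity holds for all sufficiently small $\eps$, matching the $\eps^1$-coefficients on both sides gives $\mathcal{I}^\st-\mathcal{I}^\un=0$, which is exactly \eqref{eqn:cancellation}.

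\emph{Main potential obstacle.} The only delicate point is ensuring that the two $\X^1 H_0$ contributions really do cancel exactly at order $\eps$, which requires that we evaluate both expansions at homoclinic points $\tz_\eps$, $\tz^\pm_\eps$ obtained from the \emph{same} reference unperturbed point $\tz_0$, and that Proposition~5.4 and Proposition~\ref{prop:PCRTBP_change_in_I} use consistent conventions for the footpoint projection. Given the set-up in Section~\ref{sec:perturbed_scattering}, this consistency is built in, so the cancellation is automatic and the argument reduces to the coefficient matching described above.
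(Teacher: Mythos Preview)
Your proposal is correct and follows essentially the same approach as the paper: use the matching condition $I^\st_\eps=I^\un_\eps$ to write $I^{\st+}_\eps-I^{\un-}_\eps$ as the difference of the two half-sided quantities from Proposition~5.4, compare with the formula \eqref{eqn:PCRTBP_I_plus_minus} of Proposition~\ref{prop:PCRTBP_change_in_I}, observe that the $\X^1H_0$ contributions cancel, and then invoke \textbf{(A-iii-a)} to strip off the nonzero prefactor $\left(\frac{\partial h_0}{\partial I}(I_0)\right)^{-1}g_1(I_0)$ and conclude. The paper's proof is exactly this comparison argument, stated more tersely.
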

\begin{proof}
Let us denote
\begin{equation}\label{eqn:J}
\begin{split}J^{+}=&\int_{0}^{+\infty}\left((\X^{1}\xi^\st)(\tilde{\Phi}^{\tau}_{0}
(\tilde{z}^+_{0})) -(\X^{1}\xi^\st)(\tilde{\Phi}^{\tau}_{0}(\tilde{z}_{0}))\right)d\tau,\\
J^{-}=&-\int_{-\infty}^{0}\left((\X^{1}\xi^\un)(\tilde{\Phi}^{\tau}_{0}
(\tilde{z}^-_{0})) -(\X^{1}\xi^\un)(\tilde{\Phi}^{\tau}_{0}(\tilde{z}_{0}))\right)d\tau.
\end{split}\end{equation}

Recall that, by condition \textbf{(A-iii-b)} and \eqref{eqn:theta_st_minus_theta_un}, we have $I^{\un}_\eps=I^{\st}_\eps$.
Subtracting \eqref{eqn:PCTBP_I_minus_sigma_half} from \eqref{eqn:PCTBP_I_plus_sigma_half}, and comparing with
\eqref{eqn:PCRTBP_I_plus_minus}, we should have
\[\left(\frac{\partial h_0}{\partial I}(I_0)) \right)^{-1} g_1(I_0)(J^+ + J^-)=0.\]
By condition \textbf{(A-iii-a)}, we have that $\frac{\partial h_0}{\partial I}(I_0)\neq 0$ and $g_1(I_0)\neq 0$,
therefore
\begin{equation}\label{eqn:J-cancellation}
J^+ + J^-=0.
\end{equation}
\end{proof}

\subsubsection{Change in angle by the scattering map}
\label{section:change_in_angle}

We now give the expression of the angle-component $\tilde{\mathcal{S}}^\theta$ of the mapping $\tilde{\mathcal{S}}$ in \eqref{eqn:main_sigma}.

\begin{prop}\label{prop:PCRTBP_change_in_theta}
The change in $\theta$ by the scattering map $\tilde{\sigma}_\eps$ is given by the following equation
\begin{equation}
\label{eqn:PCRTBP_change_in_theta}
\begin{split}
\theta^\st\left(\tilde{z}^{+}_{\eps}\right)-&\theta^\un\left( \tilde{z}_{\eps}^{-}\right)\\
    =&\Delta(I_0)-\varepsilon \int^{+\infty}_{0}\X^{1}\theta^\st(\tPhi^{\tau}_{0}(\tilde{z}^{+}_{0}))
-\X^{1}\theta^\st (\tPhi^{\tau}_{0}(\tilde{z}_{0})) d\tau\\
    &\qquad\,\, -\varepsilon \int_{-\infty}^{0}\X^{1}\theta^\un(\tPhi^{\tau}_{0}(\tilde{z}^{-}_{0}))
-\X^{1}\theta^\un (\tPhi^{\tau}_{0}(\tilde{z}_{0})) d\tau\\
    &+\eps\left(\frac{\partial^2 h_{0}}{\partial I^2}
(I_{0}) \right)\left(\frac{\partial h_0}{\partial I}(I_0) \right)^{-1}\int^{+\infty}_{0} \left((\X^{1}H_0)(\tilde{\Phi}^{\tau }_{0}
(\tilde{z}^+_{0}))\right.\\
&\qquad\qquad\qquad\qquad\qquad\qquad\qquad\qquad\left.
  -(\X^{1}H_0)(\tilde{\Phi}^{\tau }_{0}(\tilde{z}_{0}))\right)\tau d\tau\\
&+\eps\left(\frac{\partial^2 h_{0}}{\partial  I ^2}
(I_{0}) \right)\left(\frac{\partial h_0}{\partial I}(I_0) \right)^{-1}\int_{-\infty}^{0} \left((\X^{1}H_0)(\tilde{\Phi}^{\tau }_{0}
(\tilde{z}^-_{0}))\right.\\
&\qquad\qquad\qquad\qquad\qquad\qquad\qquad\qquad\left.
  -(\X^{1}H_0)(\tilde{\Phi}^{\tau }_{0}(\tilde{z}_{0}))\right)\tau d\tau\\
&+O(\eps^{2}),
\end{split}\end{equation}
where $\Delta(I_0)$ is  the phase-shift on the action level set $I_0$
that defines the unperturbed scattering map $\tilde{\sigma}_0$  (see Proposition \ref{prop:mismatch}).
\end{prop}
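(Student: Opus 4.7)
The first step is to decompose
\[
\theta^\st(\tz^+_\eps) - \theta^\un(\tz^-_\eps) = [\theta^\st(\tz^+_\eps) - \theta^\st(\tz_\eps)] + [\theta^\st(\tz_\eps) - \theta^\un(\tz_\eps)] + [\theta^\un(\tz_\eps) - \theta^\un(\tz^-_\eps)].
\]
The middle bracket equals $\Delta(I_0)$ exactly, because $\tz_\eps$ was chosen via the transversality condition \textbf{(A-iii-b)} in Section \ref{sec:peristence_transverse} to satisfy \eqref{eqn:theta_st_minus_theta_un}; no $\eps$-correction appears from that piece, so the task reduces to computing the two ``one-sided'' changes of the fast variable $\theta$ along the stable and unstable legs.

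For the stable bracket, the key observation is that although $\theta^\st$ is fast, the mismatch $D^\st(\tau):=\theta^\st(\tPhi^\tau_\eps(\tz^+_\eps)) - \theta^\st(\tPhi^\tau_\eps(\tz_\eps))$ is slow and decays to zero as $\tau\to+\infty$, because the two perturbed orbits lie on a common stable fibre of $W^\st(\tLambda_\eps)$ and converge exponentially by \eqref{eqn:convergence_s}. Integrating $\dot D^\st$ from $0$ to $\infty$ gives $\theta^\st(\tz^+_\eps)-\theta^\st(\tz_\eps)=-\int_0^{\infty}\dot D^\st(\tau)\,d\tau$, and the normal form Proposition \ref{prop:normal_form}\textbf{(S-iv)} yields $\X^0\theta^\st=\frac{\partial h_0}{\partial I}(I^\st)+\xi^\st\,\frac{\partial g_1}{\partial I}(I^\st)+O((\xi^\st)^2)$, so a Taylor expansion around $I_0$ produces
\[
\dot D^\st(\tau) = \tfrac{\partial^2 h_0}{\partial I^2}(I_0)\bigl[I^\st\text{-diff}\bigr] + \tfrac{\partial g_1}{\partial I}(I_0)\bigl[\xi^\st\text{-diff}\bigr] + \eps\bigl[\X^1\theta^\st\text{-diff}\bigr] + O(\eps^2).
\]
I would then substitute the $I^\st$-difference from Corollary \ref{cor:cor}, equation \eqref{eqn:PCTBP_I_plus_sigma_half}, and an analogous identity for the $\xi^\st$-difference obtained by applying the master lemmas of Appendix \ref{sec:master} to $F=\xi^\st$, which is admissible because $\X^0\xi^\st\equiv 0$ in $\mathscr{N}^\st$, as computed in the proof of Proposition \ref{prop:normal_form}. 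A Fubini change of variables $\tau'=\tau+\varsigma$ then converts the iterated integrals into $\tau'$-weighted single integrals, producing the $\frac{\partial^2 h_0}{\partial I^2}(I_0)\,\frac{\partial h_0}{\partial I}(I_0)^{-1}\int_0^{+\infty}[\cdot]\,\tau\,d\tau$ term of \eqref{eqn:PCRTBP_change_in_theta}; the $\eps\X^1\theta^\st$ contribution, upon replacing the perturbed flow and base points by their unperturbed counterparts at a cost $O(\eps^2)$, furnishes the $-\eps\int_0^{+\infty}[\X^1\theta^\st(\ldots)-\X^1\theta^\st(\ldots)]\,d\tau$ term.

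The unstable bracket $\theta^\un(\tz_\eps)-\theta^\un(\tz^-_\eps)$ is handled symmetrically by integrating the analogous derivative from $-\infty$ to $0$, and the analogous change of variables over $\varsigma,\tau\leq 0$ gives the $\int_{-\infty}^{0}[\cdot]\,\tau\,d\tau$ contributions with the correct sign. The main difficulty I anticipate is to show that the auxiliary $\X^1\xi^\st$ and $\X^1\xi^\un$ contributions — which arise both from the $\xi$-piece of the $I^\st,I^\un$-difference formulas and from the $\frac{\partial g_1}{\partial I}(I_0)\,[\xi^{\st,\un}\text{-diff}]$ terms in $\dot D^{\st,\un}$ — cancel when the stable and unstable halves are added, leaving only the $\X^1H_0$ and $\X^1\theta$ integrals of \eqref{eqn:PCRTBP_change_in_theta}. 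This is a $\tau$-weighted version of the unweighted identity \eqref{eqn:cancellation} of Corollary \ref{cor:cancellation}, and I would derive it in the same spirit: by imposing the transversality condition \textbf{(A-iii-b)} along the orbit $\tPhi^\tau_\eps(\tz_\eps)$ rather than only at $\tz_\eps$, and using the non-degeneracy \textbf{(A-iii-a)} ($\frac{\partial h_0}{\partial I}(I_0)\neq 0$ and $g_1(I_0)\neq 0$) to conclude that the $\xi$-remainders must vanish at first order in $\eps$, mirroring the derivation of Corollary \ref{cor:cancellation} from the requirement $I^\st_\eps=I^\un_\eps$.
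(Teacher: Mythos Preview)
Your approach is essentially identical to the paper's: the same three-bracket decomposition with the middle bracket equal to $\Delta(I_0)$ via \eqref{eqn:theta_st_minus_theta_un}; the same normal-form expansion of $\frac{d}{d\tau}\theta^{\st,\un}$ into the pieces the paper labels $A_1,A_2,A_3,B$; the same substitution of Corollary~\ref{cor:cor} for the running $I^{\st,\un}$-difference; and the same reduction of the resulting iterated integrals to $\tau$-weighted single integrals (the paper does this by integration by parts on the antiderivative \eqref{eqn:antiderivative} rather than by Fubini, but the output is the same). The $B$-piece is handled exactly as you say, via Master Lemma~\ref{lem:master_4}.

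On the final cancellation of the $\X^1\xi^{\st,\un}$-contributions you are in fact more explicit than the paper. The $\xi$-integrals surviving in \eqref{eqn:PCRBP_angle_diff_st}--\eqref{eqn:PCRBP_angle_diff_un} carry an extra factor of $\tau$, yet the paper records them in \eqref{eqn:final-cancellation} as the \emph{unweighted} $J^\pm$ of \eqref{eqn:J} and cancels them directly by Corollary~\ref{cor:cancellation}. Your observation that what is literally needed is a $\tau$-weighted analogue of $J^++J^-=0$ is well taken. Your plan to obtain it by rerunning the argument of Corollary~\ref{cor:cancellation} along the orbit $\tPhi^\tau_\eps(\tz_\eps)$ is the natural idea, but note one gap: the constraint $I^\st(\tz_\eps)=I^\un(\tz_\eps)$ in \eqref{eqn:theta_st_minus_theta_un} is imposed only at the single point $\tz_\eps$, and since $I^\st$ and $I^\un$ are genuinely different functions near $\Gamma_0$ (Remark~\ref{rem:coord_mismatch}) the constraint does not automatically propagate along the perturbed flow. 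You will need an additional first-order estimate on $I^\st(\tPhi^\tau_\eps(\tz_\eps))-I^\un(\tPhi^\tau_\eps(\tz_\eps))$ to close this step.
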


Proposition  \ref{prop:PCRTBP_change_in_theta} implies that
\[\tilde\sigma^\theta_\eps(I,\theta,t)=\tilde\sigma^\theta_0(I,\theta,t)+\Delta(I_0)+\eps \tilde{\mathcal{S}}^\theta (I,\theta,t)+O(\eps^2),\]
where $\eps \tilde{\mathcal{S}}^\theta$ is given by the first four terms on the right hand-side of \eqref{eqn:PCRTBP_change_in_theta}.  The expression of $\eps \tilde{\mathcal{S}}^\theta$  is more complicated than the one for $\eps \tilde{\mathcal{S}}^I$ since $\theta$ is a fast variable.

\begin{proof}
We will begin by computing the difference of the $\theta^{\st}$ evaluated at a homoclinic point and at the footpoint of the stable fiber through the homoclinic point. By Lemma \ref{lem:master_2} from Appendix \ref{sec:master}, we have

\[\theta^{\st}\left(\tilde{z}^{+}_{\varepsilon}\right)-\theta^{\st}\left(\tilde{z}_{\varepsilon}\right)=
-\int^{+\infty}_{0}\frac{d}{d\tau}\left[\theta^{\st}(\tPhi^{\tau}_{\varepsilon}(\tilde{z}^{+}_{\varepsilon}))
-\theta^{\st}(\tPhi^{\tau}_{\varepsilon}(\tilde{z}_{\varepsilon}))\right]d\tau\]

Now by equation \eqref{eqn:evolution4} we have:

\[\frac{d\theta^{\st}}{d\tau}=\frac{\partial H_{0}}{\partial I^{\st}}+\varepsilon \X^1 \theta^{\st}.\]
We can break the integral up into two parts:
\begin{equation}\label{eqn:A}
A=-\int^{+\infty}_{0}\frac{\partial H_{0}}{\partial I^{\st}}(\tPhi^{\tau}_{\varepsilon}(\tilde{z}^{+}_{\varepsilon}))
-\frac{\partial H_{0}}{\partial I^{\st}}(\tPhi^{\tau}_{\varepsilon}(\tilde{z}_{\varepsilon}))d\tau
\end{equation}
and
\begin{equation}\label{eqn:B}
B=-\varepsilon \int^{+\infty}_{0}\X^{1}\theta^\st(\tPhi^{\tau}_{\varepsilon}(\tilde{z}^{+}_{\varepsilon}))
-\X^{1}\theta^\st(\tPhi^{\tau}_{\varepsilon}(\tilde{z}_{\varepsilon})) d\tau.
\end{equation}
As for the integral $B$, $\eps\X^1\theta^\st$ is $O(\varepsilon)$. So, by Lemma \ref{lem:master_4} from Appendix \ref{sec:master}, we can express the integral in terms of the unperturbed system plus an error term:
\[B=-\varepsilon \int^{+\infty}_{0}\X^{1}\theta^\st(\tPhi^{\tau}_{0}(\tilde{z}^{+}_{0}))
-\X^{1}\theta^\st (\tPhi^{\tau}_{0}(\tilde{z}_{0})) d\tau+O(\varepsilon^{1+\varrho}).\]

Returning to the  integral \eqref{eqn:A}, we now use the normal form  \eqref{eqn:normal_st} of $H_{0}$ given by
Proposition \ref{prop:normal_form}, yielding
\begin{equation}\label{eqn:H_0_derivative}
  \frac{\partial H_{0}}{\partial I^{\st}}=\frac{\partial h_{0}}{\partial I^{\st}}(I^{\st})+(\xi^\st) \frac{\partial g_{1}}{\partial I^{\st}}(I^{\st})+(\xi^\st)^2\frac{\partial g_{2}}{\partial I^{\st}}(I^{\st},\xi^\st),
\end{equation}
where $\xi^\st=x^\st y^\st$.

Thus, the integral $A$ given by \eqref{eqn:A} breaks into three parts
\begin{eqnarray}\label{eqn:A1}
A_{1}=&\displaystyle -\int^{+\infty}_{0}\frac{\partial h_{0}}{\partial I^{\st}}( \tPhi^{\tau}_{\varepsilon}(\tilde{z}^{+}_{\varepsilon}))
-\frac{\partial h_{0}}{\partial I^{\st}}
( \tPhi^{\tau}_{\varepsilon}(\tilde{z}_{\varepsilon}))d\tau,\\[0.5em]
\label{eqn:A2}
A_{2}=&\displaystyle-\int^{+\infty}_{0}\xi^{\st}\frac{\partial g_{1}^\st}{\partial I^{\st}}(\tPhi^{\tau}_{\varepsilon}(\tilde{z}^{+}_{\varepsilon}))
-\xi^{\st}\frac{\partial g_{1}^\st}{\partial I^{\st}}(\tPhi^{\tau}_{\varepsilon}(\tilde{z}_{\varepsilon}))
d\tau ,\\[0.5em]
\label{eqn:A3}
A_{3}=&\displaystyle-\int^{+\infty}_{0}(\xi^{\st})^2\frac{\partial g_{2}^\st}{\partial I^{\st}}(\tPhi^{\tau}_{\varepsilon}(\tilde{z}^{+}_{\varepsilon}))
-(\xi^{\st})^2\frac{\partial g_{2}^\st}{\partial I^{\st}}
(\tPhi^{\tau}_{\varepsilon}(\tilde{z}_{\varepsilon}))d\tau .
\end{eqnarray}

From Lemma \ref{lem:estimates} equation \eqref{eqn:xi_estimates}
we have
$\xi^\st(\tPhi(\tz^+_\eps))=O(\varepsilon^2)$ and
$\xi^\st(\tPhi(\tz_\eps))=O(\varepsilon)$ in \eqref{eqn:A3}.
Thus, we can immediately obtain  that $A_{3}$ is $O(\varepsilon^{2})$.


We use the integral form of the Mean Value Theorem to rewrite the integral $A_{1}$.
Recall that\[\frac{\partial F}{\partial x}(b)-\frac{\partial F}{\partial x}(a)=(b-a)\int^{1}_{0}\frac{\partial^2 F}{\partial x^2}\left(a+t(b-a)\right)dt\]
Applying this result to $A_{1}$ for $b=I^{\st}( \tPhi^{\tau}_{\varepsilon}(\tilde{z}^{+}_{\varepsilon}))$,
$a=I^{\st}(\tPhi^{\tau}_{\varepsilon}(\tilde{z}_{\varepsilon}))$,
and $F=\frac{\partial h_{0}}{\partial I^{\st}}$, yields
\begin{equation}\label{eqn:A1_mean_value}
A_1=-\int^{+\infty}_{0} (I^{\st}(\tPhi^{\tau}_{\varepsilon}(\tilde{z}^{+}_{\varepsilon}))
-I^{\st}(\tPhi^{\tau}_{\varepsilon}(\tilde{z}_{\varepsilon}))) C_\eps(\tau) \,
d\tau,
\end{equation}
where $C_\eps$ stands for the integral
\begin{equation}\label{eqn:A4} C_\eps(\tau)=\int^{1}_{0}\frac{\partial^2 h_{0}}{\partial (I^{\st})^2}
\left[I^{\st}(\tPhi^{\tau}_{\varepsilon}(\tilde{z}_{\varepsilon}))
+t\left(I^{\st}(\tPhi^{\tau}_{\varepsilon}(\tilde{z}^{+}_{\varepsilon}))
-I^{\st}(\tPhi^{\tau}_{\varepsilon}(\tilde{z}_{\varepsilon}))\right)\right]dt.
\end{equation}

We  evaluate  the expression $I^{\st}(\tPhi^{\tau}_{\varepsilon}(\tilde{z}^{+}_{\varepsilon}))
-I^{\st}(\tPhi^{\tau}_{\varepsilon}(\tilde{z}_{\varepsilon}))$ in \eqref{eqn:A1_mean_value} by invoking Corollary \ref{cor:cor}, obtaining
\begin{equation}\begin{split}
&-\eps\left(\frac{\partial h_0}{\partial I}(I_0)) \right)^{-1}\int_{0}^{+\infty}\left((\X^{1}H_0)(\tilde{\Phi}^{\tau+\varsigma}_{0}
(\tilde{z}^+_{0}))
  -(\X^{1}H_0)(\tilde{\Phi}^{\tau+\varsigma}_{0}(\tilde{z}_{0}))\right)d\varsigma\\
  &+\eps \left(\frac{\partial h_0}{\partial I^\st}(I_0)) \right)^{-1} g_1(I_0)\int_{0}^{+\infty}\left((\X^{1}\xi^\st)(\tilde{\Phi}^{\tau+\varsigma}_{0}
(\tilde{z}^+_{0}))
  -(\X^{1}\xi^\st)(\tilde{\Phi}^{\tau+\varsigma}_{0}(\tilde{z}_{0}))\right)d\varsigma\\
 &+O\left(\eps^{2}\right).
\end{split}\end{equation}

The next part of the integrand is
\[C_{\varepsilon}(\tau)=\int^{1}_{0}\frac{\partial^2 h_{0}}{\partial (I^{\st})^2}
\left[I^{\st}\left(\tPhi^{\tau}_{\varepsilon}\left(\tilde{z}_{\varepsilon}\right)\right)
+t\left(I^{\st}\left(\tPhi^{\tau}_{\varepsilon}\left(\tilde{z}^{+}_{\varepsilon}\right)\right)-
I^{\st}\left(\tPhi^{\tau}_{\varepsilon}\left(\tilde{z}_{\varepsilon}\right)\right)\right)\right]dt.\]

Using Gronwall's inequality -- Lemma  \eqref{lem:Gronwall_application} --, we can write
\[C_{\varepsilon}(\tau)=C_{0}(\tau)+O(\varepsilon^{\varrho}),\]
where $0<\varrho<1$. However, when $\varepsilon=0$, $I^{\st}$ along the flow of the footpoint $\tilde{z}^{+}_0$ is equal to $I^{\st}$ along the flow of the homoclinic point $\tz_0$. Since $I^{\st}\left(\tilde{z}_{0}\right)=I_0$ we obtain
\[C_{0}(\tau)=\frac{\partial^2 h_{0}}{\partial I^2}
(I_0),\]
which is a constant.

Putting these expressions together, we can write $A_{1}$ as
\begin{equation}\label{eqn:double_integral}\begin{split}
&\eps\left(\frac{\partial^2 h_{0}}{\partial I^2}\left(I_0\right)\right) \left(\frac{\partial h_0}{\partial I}(I_0)) \right)^{-1}\int^{+\infty}_{0} \int_{0}^{+\infty}\left((\X^{1}H_0)(\tilde{\Phi}^{\tau+\varsigma}_{0}
(\tilde{z}^+_{0}))\right.\\
&\qquad\qquad\qquad\qquad\qquad\qquad\qquad\qquad\left.  -(\X^{1}H_0)(\tilde{\Phi}^{\tau+\varsigma}_{0}(\tilde{z}_{0}))\right)d\varsigma d\tau\\
  &-\eps\left( \frac{\partial^2 h_{0}}{\partial I^2}(I_0) \right)\left(\frac{\partial h_0}{\partial I}(I_0)) \right)^{-1} g_1(I_0)\int_{0}^{+\infty} \int_{0}^{+\infty}\left((\X^{1}\xi^\st)(\tilde{\Phi}^{\tau+\varsigma}_{0}
(\tilde{z}^+_{0}))\right.\\
&\qquad\qquad\qquad\qquad\qquad\qquad\qquad\qquad\left.
  -(\X^{1}\xi^\st)(\tilde{\Phi}^{\tau+\varsigma}_{0}(\tilde{z}_{0}))\right)d\varsigma d\tau\\
 &+O\left(\eps^{1+\varrho}\right).
\end{split}\end{equation}

We will now write the  double integrals in \eqref{eqn:double_integral} in a simpler form.
We show the details of the computation from the first double integral that appears in \eqref{eqn:double_integral},
as the second double integral can be treated in a similar fashion.
Denote by $\mathscr{I}^\st$ the following improper integral
\begin{equation}\label{eqn:antiderivative}
\begin{split}\mathscr{I}^\st(\tau)=&-\int_{\tau}^{+\infty}((\X^{1}H_0)(\tPhi^{\upsilon}_{0}
(\tz^+_{0}))  -(\X^{1}H_0)(\tPhi^{\upsilon}_{0}(\tz_{0})))  d\upsilon.
\end{split}\end{equation}
Since $(\X^{1}H_0)(\tPhi^{\upsilon}_{0}(\tz^+_{0}))  -(\X^{1}H_0)(\tPhi^{\upsilon}_{0}(\tz_{0})))$ approaches $0$ exponentially
as $\upsilon\to+\infty$, the above integral is convergent and moreover
\[\frac{d}{d\tau}(\mathscr{I}^\st(\tau))=(\X^{1}H_0)(\tPhi^{\tau}_{0}
(\tz^+_{0}))  -(\X^{1}H_0)(\tPhi^{\tau}_{0}(\tz_{0})).\]
That is, $\mathscr{I}^\st(\tau)$ is the antiderivative of $\tau\mapsto (\X^{1}H_0)(\tPhi^{\tau}_{0}
(\tz^+_{0}))  -(\X^{1}H_0)(\tPhi^{\tau}_{0}(\tz_{0}))$ satisfying the condition that it equals  $0$ at $+\infty$.

Making the change of variable $\upsilon =\sigma+\tau$ with $d\upsilon=d\sigma$ the double integral in \eqref{eqn:double_integral} becomes
\begin{equation}\label{eqn:first_int_2}
\begin{split}
    \int_{0}^{+\infty}\int_{\tau}^{+\infty}((\X^{1}H_0)(\tPhi^{\upsilon}_{0}
(\tz^+_{0})) -(\X^{1}H_0)(\tPhi^{\upsilon}_{0}(\tz_{0})))  d\upsilon d\tau=
 -\int_{0}^{+\infty}\mathscr{I}^\st(\tau)d\tau.
\end{split}
\end{equation}

Using Integration by Parts we obtain
\begin{equation}\begin{split}
-\int_{0}^{+\infty}\mathscr{I}^\st(\tau)d\tau=&-\tau\mathscr{I}^\st(\tau)\biggr| _{0} ^{+\infty}
\\&+\int_{0}^{+\infty} ((\X^{1}H_0)(\tPhi^{\tau}_{0} (\tz^+_{0}))  -(\X^{1}H_0)(\tPhi^{\tau}_{0}(\tz_{0})))\tau d\tau\\
=&\int_{0}^{+\infty} ((\X^{1}H_0)(\tPhi^{\tau}_{0}
(\tz^+_{0}))  -(\X^{1}H_0)(\tPhi^{\tau}_{0}(\tz_{0})))\tau d\tau.
\end{split}\end{equation}
In the above, the quantity $\tau\mathscr{I}^\st(\tau)$ obviously equals $0$  at $\tau=0$, and
equals  $0$ when $\tau\to +\infty$ since, by l'Hospital Rule
\[\lim_{\tau\to +\infty}\frac{\mathscr{I}^\st(\tau)}{\tau^{-1}}=\lim_{\tau\to +\infty}-\frac{(\X^{1}H_0)(\tPhi^{\tau}_{0}
(\tz^+_{0}))  -(\X^{1}H_0)(\tPhi^{\tau}_{0}(\tz_{0}))}{\tau^{-2}}=0,\]
since $(\X^{1}H_0)(\tPhi^{\tau}_{0}
(\tz^+_{0}))  -(\X^{1}H_0)(\tPhi^{\tau}_{0}(\tz_{0}))$ approaches $0$ at exponential rate as $\tau\to+\infty$.

A similar computation can be done to write the second double integral that appears in
\eqref{eqn:double_integral} as a single integral.

Thus, we obtain the following expression of $A_1$:
\begin{equation}\label{eqn:A1_final}\begin{split}
&\eps\left(\frac{\partial^2 h_{0}}{\partial I^2}
(I_{0}) \right)\left(\frac{\partial h_0}{\partial I}(I_0) \right)^{-1}\int^{+\infty}_{0} \left((\X^{1}H_0)(\tilde{\Phi}^{\tau }_{0}
(\tilde{z}^+_{0}))\right.\\
&\qquad\qquad\qquad\qquad\qquad\qquad\qquad\qquad\left.
  -(\X^{1}H_0)(\tilde{\Phi}^{\tau }_{0}(\tilde{z}_{0}))\right)\tau d\tau\\
&-\eps \left(\frac{\partial^2 h_{0}}{\partial I^2}(I_{0})\right) \left(\frac{\partial h_0}{\partial I}(I_0)) \right)^{-1} g_1(I_0)\int_{0}^{+\infty}\left((\X^{1}\xi^\st)(\tilde{\Phi}^{\tau }_{0}
(\tilde{z}^+_{0}))\right.\\
&\qquad\qquad\qquad\qquad\qquad\qquad\qquad\qquad\left.
-(\X^{1}\xi^\st)(\tilde{\Phi}^{\tau }_{0}(\tilde{z}_{0}))\right) \tau d\tau\\
&+O\left(\eps^{1+\varrho}\right).
\end{split}\end{equation}

Finally, we turn to the integral $A_{2}$ given by \eqref{eqn:A2}.
Using Lemma \ref{lem:estimates} equations \eqref{eqn:partial_g1_estimates} and \eqref{eqn:xi_estimates}, \eqref{eqn:xi_s_int}, as well as integration by parts similarly to above, we express $A_2$ as
\begin{equation}\label{eqn:A_2_final}
\begin{split}
-&\int_{0}^{+\infty}(\xi^{\st}\frac{\partial g_{1}}{\partial I})(\tPhi^{\tau}_{\varepsilon}(\tilde{z}^{+}_{\varepsilon}))
- (\xi^{\st}\frac{\partial g_{1}}{\partial I})(\tPhi^{\tau}_{\varepsilon}(\tilde{z}_{\varepsilon}))d \tau\\
=&-\left(\frac{\partial g_{1}}{\partial I}(I_0)\right)
\int_{0}^{+\infty}(\xi^\st((\tPhi^{\tau}_{\varepsilon}(\tilde{z}^{+}_{\varepsilon})))-
\xi^\st((\tPhi^{\tau}_{\varepsilon}(\tilde{z}_{\varepsilon}))))d\tau\\
&+O(\eps^{1+\varrho})\\
=&\eps\left( \frac{\partial g_{1}}{\partial I}(I_0) \right) \int_{0}^{+\infty}\int_{0}^{+\infty}\left((\X^{1}\xi^\st)(\tilde{\Phi}^{\tau+\varsigma}_{0}
(\tilde{z}^+_{0}))
  -(\X^{1}\xi^\st)(\tilde{\Phi}^{\tau+\varsigma}_{0}(\tilde{z}_{0}))\right) d\varsigma d\tau\\
  &+O(\eps^{1+\varrho}) \\
=&\eps\left( \frac{\partial g_{1}}{\partial I}(I_0)\right)
\int_{0}^{+\infty} \left((\X^{1}\xi^\st)(\tilde{\Phi}^{\tau}_{0}
(\tilde{z}^+_{0}))  -(\X^{1}\xi^\st)(\tilde{\Phi}^{\tau}_{0}(\tilde{z}_{0}))\right) \tau d\tau \\ &+O(\eps^{1+\varrho})
\end{split}
\end{equation}

In the above, we have used that \[(\X^0+\eps \X^1)\xi^s=\X^0\xi^s+\eps \X^1 \xi^s=\{\xi^s, H_0\}+\eps \X^1 \xi^s=\eps \X^1 \xi^s+O(\eps^2).\]

Combining \eqref{eqn:A1_final} and \eqref{eqn:A_2_final} we obtain
\begin{equation}\label{eqn:PCRBP_angle_diff_st}
\begin{split}
    \theta^\st\left(\tilde{z}^{+}_{\eps}\right)-&\theta^\st\left( \tilde{z}_{\eps}\right)\\
    =&\eps\left(\frac{\partial^2 h_{0}}{\partial I^2}
(I_{0}) \right)\left(\frac{\partial h_0}{\partial I}(I_0) \right)^{-1}\int^{+\infty}_{0} \left((\X^{1}H_0)(\tilde{\Phi}^{\tau }_{0}
(\tilde{z}^+_{0}))\right.\\
&\qquad\qquad\qquad\qquad\qquad\qquad\qquad\qquad\left.
  -(\X^{1}H_0)(\tilde{\Phi}^{\tau }_{0}(\tilde{z}_{0}))\right)\tau d\tau\\
&-\eps \left(\frac{\partial^2 h_{0}}{\partial I^2}(I_{0})\right) \left(\frac{\partial h_0}{\partial I}(I_0)) \right)^{-1} g_1(I_0)\int_{0}^{+\infty}\left((\X^{1}\xi^\st)(\tilde{\Phi}^{\tau }_{0}
(\tilde{z}^+_{0}))\right.\\
&\qquad\qquad\qquad\qquad\qquad\qquad\qquad\qquad\left.
-(\X^{1}\xi^\st)(\tilde{\Phi}^{\tau }_{0}(\tilde{z}_{0}))\right) \tau d\tau\\
  &+ \eps \left(\frac{\partial g_{1}}{\partial I}(I_0)\right)\int_{0}^{+\infty} \left((\X^{1}\xi^\st)(\tilde{\Phi}^{\tau}_{0}
(\tilde{z}^+_{0}))  -(\X^{1}\xi^\st)(\tilde{\Phi}^{\tau}_{0}(\tilde{z}_{0}))\right) \tau d\tau \\ &+O(\eps^{2})
\end{split}\end{equation}
In the above, we have also  replaced the error term $O(\eps^{1+\varrho})$ by $O(\eps^2)$, by calling the same argument as in the proof of Proposition \ref{prop:PCRTBP_change_in_I}.

A similar computation yields:
\begin{equation}\label{eqn:PCRBP_angle_diff_un}
\begin{split}
    \theta^\un\left(\tilde{z}^{-}_{\eps}\right)-&\theta^\un\left( \tilde{z}_{\eps}\right)\\
    =&-\eps\left(\frac{\partial^2 h_{0}}{\partial  I ^2}
(I_{0}) \right)\left(\frac{\partial h_0}{\partial I}(I_0) \right)^{-1}\int_{-\infty}^{0} \left((\X^{1}H_0)(\tilde{\Phi}^{\tau }_{0}
(\tilde{z}^-_{0}))\right.\\
&\qquad\qquad\qquad\qquad\qquad\qquad\qquad\qquad\left.
  -(\X^{1}H_0)(\tilde{\Phi}^{\tau }_{0}(\tilde{z}_{0}))\right)\tau d\tau\\
&+\eps \left(\frac{\partial^2 h_{0}}{\partial I^2}(I_{0})\right) \left(\frac{\partial h_0}{\partial I}(I_0)) \right)^{-1} g_1(I_0)\int^{0}_{-\infty}\left((\X^{1}\xi^\un)(\tilde{\Phi}^{\tau }_{0}
(\tilde{z}^-_{0}))\right.\\
&\qquad\qquad\qquad\qquad\qquad\qquad\qquad\qquad\left.
-(\X^{1}\xi^\un)(\tilde{\Phi}^{\tau }_{0}(\tilde{z}_{0}))\right) \tau d\tau\\
  &- \eps \left(\frac{\partial g_{1}}{\partial I}(I_0)\right) \int^{0}_{-\infty} \left((\X^{1}\xi^\un)(\tilde{\Phi}^{\tau}_{0}
(\tilde{z}^-_{0}))  -(\X^{1}\xi^\un)(\tilde{\Phi}^{\tau}_{0}(\tilde{z}_{0}))\right) \tau d\tau \\ &+O(\eps^{2})
\end{split}\end{equation}

Subtracting \eqref{eqn:PCRBP_angle_diff_un} from \eqref{eqn:PCRBP_angle_diff_st} yields
$\theta^\st\left(\tilde{z}^{+}_{\eps}\right)-\theta^\un\left(\tilde{z}^{-}_{\eps}\right)= \theta^\st\left( \tilde{z}_{\eps}\right)-\theta^\un\left( \tilde{z}_{\eps}\right)$, plus an epsilon order term consisting of the  sum of six integrals, plus an error term of order $O(\eps^2)$.
Using the notation \eqref{eqn:J}, four of these integrals are
\begin{equation}\label{eqn:final-cancellation}
  \begin{split}
    &-\eps \left(\frac{\partial^2 h_{0}}{\partial I^2}(I_{0})\right) \left(\frac{\partial h_0}{\partial I}(I_0)) \right)^{-1} g_1(I_0)J^+,\\
    &-\eps \left(\frac{\partial^2 h_{0}}{\partial I^2}(I_{0})\right) \left(\frac{\partial h_0}{\partial I}(I_0)) \right)^{-1} g_1(I_0)J^-,\\
    &\eps \left(\frac{\partial g_{1}}{\partial I}(I_0)\right)J^+,\\
    &\eps \left(\frac{\partial g_{1}}{\partial I}(I_0)\right)J^-.
  \end{split}
\end{equation}

By Corollary \ref{cor:cancellation}, since $J^++J^-=0$, the sum of the first two expressions in \eqref{eqn:final-cancellation} equals~$0$, and the sum of the last two expressions in \eqref{eqn:final-cancellation} equals~$0$.

Also, we recall from Section \ref{sec:peristence_transverse}, that for a  given unperturbed homoclinic point $\tz_0$ we selected a perturbed homoclinic point
$\tz_\eps$ satisfying condition \textbf{(A-iii-b)} \eqref{eqn:theta_st_minus_theta_un},
that is $\theta^\st(\tz_\eps)-\theta^\un(\tz_\eps)=\theta^\st(\tz_0)-\theta^\un(\tz_0)=\Delta(I(\tz_0))$.

Combining these results we obtain \eqref{eqn:PCRTBP_change_in_theta}.
\end{proof}

\section*{Acknowledgements} We are grateful to Rodney Anderson and Angel Jorba for discussions and comments.

\appendix
\section{Normally hyperbolic invariant manifolds}\label{sec:NHIM}
We briefly recall the notion of a normally hyperbolic invariant manifold.

\begin{defn}\label{defn:NHIM} Let $M$ be a $\mathcal{C}^r$-smooth manifold, $\Phi^t$ a $\mathcal{C}^r$-flow on $M$. A submanifold  (with or without boundary) $\Lambda$ of $M$ is a  normally hyperbolic invariant manifold (NHIM) for $\Phi^t$ if  it is invariant under $\Phi^t$, and there exists a splitting of the tangent bundle of $TM$ into sub-bundles over $\Lambda$
\begin{equation}
\label{eqn:NHIM_splitting}
T_z M=E^{\un}_z \oplus E^{\st}_z \oplus T_z \Lambda, \quad \forall z \in \Lambda
\end{equation}
that are invariant under $D\Phi^t$ for all $t\in\mathbb{R}$, and there exist  rates
\[\lambda_-\le \lambda_+<\lambda_c<0<\mu_c<\mu_-\le \mu_+\]
and a constant ${C}>0$, such that for all $x\in\Lambda$ we have
\begin{equation}
\label{eqn:NHIM_rates}
\begin{split} {C}e^{t\lambda_- }\|v\| \leq \|D\Phi^t(z)(v)\|\leq  {C}e^{t\lambda_+}\|v\|  \textrm{ for all } t\geq 0, &\textrm{ if and only if } v\in E^{\st}_z,\\
{C}e^{t\mu_+ }\|v\|\leq \|D\Phi^t(z)(v)\|\leq  {C}e^{t\mu_- }\|v\|  \textrm{ for all } t\leq 0,  &\textrm{ if and only if }v\in E^{\un}_z,\\
{C}e^{|t|\lambda_c }\|v\|\leq  \|D\Phi^t(z)(v)\|\leq  {C}e^{|t|\mu_c}\|v\| \textrm{ for all } t\in\mathbb{R}, &\textrm{ if and only if }v\in T_z\Lambda.
\end{split}
\end{equation}
\end{defn}

In the case when $\Phi^t$ is a Hamiltonian flow, the rates can be chosen so that \[\lambda_-=-\mu_+,\, \lambda_+=-\mu_-,\,\textrm{ and }\lambda_c=-\mu_c.\]

The regularity of the manifold $\Lambda$ depends on the rates
$\lambda^-$, $\lambda^+$, $\mu^-$, $\mu^+$,  $\lambda_c$, and $\mu_c$.
More precisely, $\Lambda$ is $\mathcal{C}^{\ell}$-differentiable, with $\ell\leq r-1$,   provided that
\begin{equation}\label{eqn:ratesdifferentiable}
\begin{split}
& \ell {\mu}_c  + {\lambda}_+ < 0, \\
& \ell {\lambda}_c +  {\mu}_- > 0.
\end{split}
\end{equation}

The manifold  $\Lambda$ has associated  unstable and stable manifolds of $\Lambda$,
denoted $W^{\un}(\Lambda)$ and $W^{\st}(\Lambda)$, respectively,
which are $\mathcal{C}^{\ell-1}$-differentiable.
They are foliated by $1$-dimensional unstable and stable manifolds (fibers) of points,
$W^{\un}(z)$, $W^{\st}(z)$, $z\in\Lambda$,  respectively, which are as smooth as the flow.

These manifolds are defined by:
\begin{equation}\label{stablemanif-flow}
\begin{split}
 W^\st(\Lambda)
 =& \{ y \,|\, d(  {\Phi}^t_\eps(y), \Lambda ) \rightarrow 0 \textrm{ as {$t \to +\infty$} }\} \\
 =& \{ y \,|\, d(  {\Phi}^t_\eps(y), \Lambda ) \le  C_y e^{t\lambda_+} ,t \ge 0\}, \\
 W^\un(\Lambda)
 =& \{ y \,|\, d(  {\Phi}^t_\eps(y), \Lambda ) \rightarrow 0 \textrm{ as {$t \to -\infty$} }\} \\
 =& \{ y \,|\, d(  {\Phi}^t_\eps(y), \Lambda ) \le  C_y e^{t\mu_- } ,t \ge 0\},\\
W^{\st}(x)
=& \{y \,|\, d( \Phi^t (y),\Phi^t (x) ) <  C_y  e^{t\lambda_+ },\, t\geq 0  \}, \\
W^{\un}(x)
=& \{y \,|\, d( \Phi^t (y), \Phi^t (x) ) <  C_y  e^{t\mu_- },\, t\leq0  \}.
\end{split}
\end{equation}

The  fibers $W^{\un}(x)$, $W^{\st}(x)$ are not invariant by the flow, but \emph{equivariant} in the sense that
\begin{equation}\label{eqn:equivariance}
\begin{split}
\Phi^t(W^\un(z))&=W^\un(\Phi^t(z)),\\
\Phi^t(W^\st(z))&=W^\st(\Phi^t(z)).
\end{split}\end{equation}\

Since $W^{\st,\un}(\Lambda)=\bigcup_{z\in\Lambda} W^{s,u}(z)$, we can define the projections along the fibers
\begin{equation}\label{eqn:projections}\begin{split}
\Omega^{+}:W^{\st}(\Lambda)\to\Lambda,\quad &\Omega^+(z)=z^+\textrm{ iff }
z \in W^{\st}(z^{+}),\\
\Omega^{-}:W^{\un}(\Lambda)\to\Lambda,\quad &\Omega^-(z)=z^-\textrm{ iff }z \in W^{\un}(z^{-}).\end{split}\end{equation}

The  point
$z^+\in \Lambda$ is characterized by
\begin{equation}\label{eqn:convergence_s}
d( {\Phi}^t(z), {\Phi}^t(z^+) ) \le
 C_z e^{t \lambda_+}, \quad \textrm { for all } t \ge
0.
\end{equation}
and the point $z^- \in \Lambda$ by
\begin{equation}\label{eqn:convergence_u}
d(  {\Phi}^t(z), {\Phi}^t(z^-) ) \le
  C_z e^{t \mu_-}, \quad \textrm { for all } t \leq 0,
\end{equation}
for some $C_z>0$.

For our applications, the most important result about NHIM's is
that they persist when we perturb the flow.  This is
the fundamental result of \cite{Fenichel71,HirschPS77,Pesin04}.
In the case when the manifold has a boundary, the persistence result
requires a step of extending the flow. This makes  that the  persistent
manifold is not invariant but only locally invariant and not unique
(it depends on the extension).

When we are given a family of flows, it is possible to choose
the extensions depending smoothly on parameters and obtain
that the manifolds depend smoothly on parameters.

The precise meaning of the smooth dependence is that the we can find
diffeomorphisms  $k_\eps: \Lambda_0 \rightarrow \Lambda_\eps$.
The maps $k_\eps(x)$ are jointly $C^r$ as functions of $x, \eps$.
The proof of this well known result is not very difficult. It suffices
to consider an extended flow
$\tilde \Phi^t(x, \eps) =  ( \Phi_\eps^t(x), \eps)$, which is
a small perturbation of $\tilde \Phi^t_0(x, \eps) =  ( \Phi_0^t(x), \eps)$.
The regularity of the NHIM of $\tilde \Phi^t$  gives the claimed
regularity of the NHIM of $\Phi^t$ with respect to parameters.

From the same proof (using the invariant objects
of the extended flow), it easily  follows the regularity with respect to
parameters of the stable and unstable bundles and the stable and unstable manifolds.

\section{Scattering map}\label{section:scattering_review}

Assume that $W^\un(\Lambda)$, $W^\st(\Lambda)$ have a   transverse intersection along a manifold $\Gamma$ satisfying:
\begin{equation}\label{goodtransversal}
\begin{split}
&T_z\Gamma = T_zW^\st(\Lambda)\cap
T_zW^\un(\Lambda), \textrm { for all } z\in\Gamma,\\
&T_zM=T_z\Gamma  \oplus
T_zW^\un(z^-)\oplus  T_zW^\st(z^+), \textrm { for all } z\in\Gamma.
\end{split}
\end{equation}

Under these conditions the projection mappings  $\Omega^\pm$ restricted to $\Gamma$ are local diffeomorphisms. We can restrict $\Gamma$ if necessary so that $\Omega^\pm$ are diffeomorphisms from $\Gamma$ onto open subsets $U^\pm$ in  $\Lambda$.

\begin{defn}\label{def:homoclinic_channel}
A homoclinic channel is a homoclinic manifold $\Gamma$ satisfying the strong transversality  condition \ref{goodtransversal}, and such that
\[\Omega^\pm_{\mid\Gamma}:\Gamma\to U^\pm:=\Omega^\pm(\Gamma)\]
is a $\mathcal{C}^{\ell-1}$-diffeomorphism.
\end{defn}

\begin{defn}\label{def:scattering map} Given a homoclinic channel $\Gamma$, the scattering map associated to $\Gamma$ is
defined as
\begin{equation*}\begin{split}
\sigma:=&\sigma^\Gamma : U^- \subseteq \Lambda \to U^+ \subseteq
\Lambda,\\
\sigma = &\Omega^+ \circ (\Omega^{-})^{-1}.
\end{split}
\end{equation*}
\end{defn}

Equivalently, $\sigma(z^-) = z^+$, provided that
$W^\un(z^-)$ intersects $W^\st(z^+)$ at a unique point
$z\in\Gamma$.

The meaning of the scattering map is that, given a homoclinic
excursion, it has two orbits in the manifold is asymptotic to.
It is asymptotic to an orbit in the past and to another
orbit in the future. The scattering map considers the future
asymptotic orbit as a function of the asymptotic in the past.
When we consider all the homoclinic orbits in a homoclinic channel
we obtain a scattering map from an open domain.
The intuition of the scattering map is that if we observe the orbit
for long times, we just measure the effect of the homoclinic  excursion
on the asymptotic behavior.   The scattering map is a very
economical way of studying these excursions since it is a map only on the
NHIM. Furthermore, as we will see now, it satisfies remarkable geometric properties.

Due to \eqref{eqn:equivariance}, the scattering map satisfies the following property
\begin{equation}
\label{eqn:invariant}
                   \Phi^T \circ\sigma ^{\Gamma}=\sigma ^{\Phi^T(\Gamma)}\circ \Phi^T
\end{equation}
for any $T\in\mathbb{R}$.

If $M$ is a  symplectic manifold, $\Phi^t$ is a Hamiltonian flow on $M$, and $\Lambda\subseteq M$ is symplectic,
then the scattering map is symplectic. If the flow is exact Hamiltonian,
the scattering map is exact symplectic. For details see \cite{DelshamsLS08a}.

In a similar fashion, we can define heteroclinic channels and associated scattering maps.

Given two NHIM's $\Lambda^1$ and $\Lambda^2$, we can define the projection mappings  $\Omega^{\pm,i}:W^{\st,\un}(\Lambda^i)\to\Lambda^i$ for $i=1,2$.
Assume that $W^\un(\Lambda^1)$ intersects transversally $W^\st(\Lambda^2)$ along a heteroclinic manifold $\Gamma$ so that:
\begin{equation}\label{goodtransversal2}
\begin{split}
&T_z\Gamma = T_zW^\un(\Lambda^1)\cap T_zW^\st(\Lambda^2), \textrm { for all } z\in\Gamma,\\
&T_zM=T_z\Gamma  \oplus
T_zW^\un(z^-)\oplus  T_zW^\st(z^+), \textrm { for all } z\in\Gamma,
\end{split}
\end{equation}
 where $z^-=\Omega^{-,1}(z) \in\Lambda^1$ and $z^+=\Omega^{+,2}(z)\in\Lambda^2$.

We can restrict $\Gamma$ so that $\Omega^{-,1}:\Gamma\to\Lambda^1$ and   $\Omega^{+,2}:\Gamma\to\Lambda^2$  are diffeomorphisms onto their corresponding images.

\begin{defn}\label{def:heteroclinic_channel}
A heteroclinic channel is a heteroclinic manifold $\Gamma$ satisfying the strong transversality  condition \ref{goodtransversal2}, and such that
\begin{equation*}\begin{split}
\Omega^{-,1}_{\mid\Gamma}:\Gamma\to U^-:=\Omega^{-,1}(\Gamma)\subseteq\Lambda^1,\\
\Omega^{+,2}_{\mid\Gamma}:\Gamma\to U^+:=\Omega^{+,2}(\Gamma)\subseteq\Lambda^2,
\end{split}\end{equation*}
are $\mathcal{C}^{l-1}$-diffeomorphisms.
\end{defn}

\begin{defn}\label{def:scattering map_heteroclinic} Given a heteroclinic channel $\Gamma$, the scattering map associated to $\Gamma$ is defined as
\begin{equation*}\begin{split}
\sigma:=&\sigma^\Gamma : U^- \subseteq \Lambda^1 \to U^+ \subseteq
\Lambda^2,\\
\sigma = &\Omega^{+,2} \circ (\Omega^{-,1})^{-1}.
\end{split}
\end{equation*}
\end{defn}

From the result of  the regularity with respect to parameters of
the stable and unstable manifolds and the fact that the scattering map is
expressed in terms of transverse intersections, we obtain that the scattering map
depends smoothly on parameters. Thus, the goal of this paper is
not to prove the  derivative of the scattering map with respect to the perturbation parameter exists, only to give explicit
formulas knowing that the derivative exists.

\section{Gronwall's inequality}\label{sec:gronwall}
In this section we apply  Gronwall's Inequality  to estimate the error between the solution of an unperturbed system
and the solution of the perturbed system, over a time of logarithmic order with respect to the size of the perturbation.

\begin{lem}\label{lem:Gronwall_application}
Consider the following differential equations:
\begin{eqnarray}
\label{eqn:eq_0}\frac{d}{dt}{z}(t)&=&\X^0(z,t)\\
\label{eqn:eq_1}\frac{d}{dt}{z}(t)&=&\X^0(z,t)+\eps \X^1(z,t,\eps)
\end{eqnarray}
Assume that $\X^0,\X^1$ are    uniformly Lipschitz continuous  in the variable $z$,  $C_0$ is the Lipschitz constant of $\X^0$,  and $\X^1$ is bounded with $\|\X^1\|\leq C_1$, for some $C_0,C_1>0$.
Let $z_0$ be a solution of the equation \eqref{eqn:eq_0} and $z_\eps$ be a solution of the equation \eqref{eqn:eq_1} such that
\begin{equation}\label{eqn:eq_3}
\|z_0(t_0)-z_\eps(t_0)\|<c\eps.
\end{equation}
Then, for  $0<\varrho_0<1$, $k\leq \frac{1-{\varrho_0}}{C_0}$, and $K=c+\frac{C_1}{C_0}$, we have
\begin{equation}\label{eqn:eq_4}
\|z_0(t)-z_\eps(t)\|< K\eps^{\varrho_0}, \textrm{ for } 0\leq t-t_0\leq k\ln(1/\eps).
\end{equation}
\end{lem}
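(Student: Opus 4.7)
The plan is to estimate $e(t) := \|z_0(t) - z_\eps(t)\|$ by writing each solution as an integral equation, subtracting, and applying the hypothesized Lipschitz bound on $\X^0$ and the uniform bound on $\X^1$. First I would write, using \eqref{eqn:eq_0}, \eqref{eqn:eq_1} and the initial estimate \eqref{eqn:eq_3},
\begin{equation*}
e(t) \leq c\eps + \int_{t_0}^{t} \|\X^0(z_0(s),s) - \X^0(z_\eps(s),s)\|\,ds + \eps \int_{t_0}^{t} \|\X^1(z_\eps(s),s,\eps)\|\,ds,
\end{equation*}
and then bound the first integrand by $C_0\, e(s)$ and the second by $C_1$, to obtain the key integral inequality
\begin{equation*}
e(t) \leq c\eps + \eps C_1 (t-t_0) + C_0 \int_{t_0}^{t} e(s)\,ds.
\end{equation*}

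Next I would apply Gronwall's inequality. The cleanest way is to introduce the majorant $f(t) := c\eps + \eps C_1 (t-t_0) + C_0 \int_{t_0}^{t} e(s)\,ds$, observe that $e(t)\leq f(t)$ and $f'(t) = \eps C_1 + C_0\, e(t) \leq \eps C_1 + C_0\, f(t)$, and integrate this linear differential inequality with $f(t_0) = c\eps$ to get
\begin{equation*}
e(t) \leq f(t) \leq c\eps\, e^{C_0(t-t_0)} + \frac{\eps C_1}{C_0}\bigl(e^{C_0(t-t_0)} - 1\bigr) \leq K\eps\, e^{C_0(t-t_0)},
\end{equation*}
with $K = c + C_1/C_0$ exactly as in the statement.

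Finally, I would substitute the time restriction $0 \leq t - t_0 \leq k \ln(1/\eps)$ with $k \leq (1-\varrho_0)/C_0$ to control the exponential factor: one has $e^{C_0(t-t_0)} \leq e^{C_0 k \ln(1/\eps)} = \eps^{-C_0 k} \leq \eps^{\varrho_0 - 1}$, hence $e(t) \leq K \eps \cdot \eps^{\varrho_0 - 1} = K \eps^{\varrho_0}$, which is precisely \eqref{eqn:eq_4}. There is no substantive obstacle: the argument is a textbook application of Gronwall's inequality, and the only delicate point is the bookkeeping of exponents, where the choice $k \leq (1-\varrho_0)/C_0$ is exactly what makes the exponential blow-up $e^{C_0(t-t_0)}$ absorb all but the factor $\eps^{\varrho_0}$ over the logarithmic time window.
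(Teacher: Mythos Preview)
Your argument is correct and is exactly the standard Gronwall estimate one would expect: derive the integral inequality, apply the linear Gronwall bound to get $e(t)\leq K\eps\,e^{C_0(t-t_0)}$, and then use the logarithmic time cutoff $t-t_0\leq k\ln(1/\eps)$ with $kC_0\leq 1-\varrho_0$ to convert this into $K\eps^{\varrho_0}$. The paper itself does not give a proof of this lemma but simply refers to \cite{gidea2021global}, so there is nothing to compare against here; your proof is precisely the elementary argument that reference would contain.
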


For a proof, see \cite{gidea2021global}.

\section{Master lemmas}\label{sec:master}
In this section we recall some abstract Melnikov-type integral operators and  some of their properties
from \cite{gidea2021global}.

Consider a system as in \eqref{eqn:generalperturbation}
and the extended systems as in  \eqref{eqn:generalperturbation_t}.

Assume that, for some $\eps_1>0$, and for each $\eps\in(-\eps_1,\eps_1)$, there exists a normally hyperbolic invariant manifold $\tLambda_\eps$ for $\tPhi^\tau_\eps$, as well as a homoclinic channel  $\tGamma_\eps$, which depend  $\mathcal{C}^{\ell}$-smoothly on $\eps$. Associated to  $\tGamma_\eps$ we have projections $\Omega^\pm:\tGamma_\eps\to \Omega^\pm(\tGamma_\eps)\subseteq \tLambda_\eps$, which are local diffeomorphisms.
We are thinking of $\tPhi^\tau_\eps$, $\tLambda_\eps$, $\tGamma_\eps$  as perturbations of
$\tPhi^\tau_0$, $\tLambda_0$, $\tGamma_0$.

For $\tz_0\in\tGamma_0$ let  $\tz_\eps\in\tGamma_\eps$ be the  corresponding  homoclinic point satisfying
\eqref{eqn:theta_st_minus_theta_un}.
Because of the smooth dependence
of the normally hyperbolic manifold  and of its stable and unstable
manifolds on the perturbation, $\tz_\eps$   is $O(\eps)$-close to $\tz_0$ in the $\mathcal{C}^\ell$-topology,
that is
\begin{equation}\label{eqn:zeps_z0}
\tz_\eps=\tz_0+O(\eps).
\end{equation}

Let $(\tz_\eps,\eps)\in\widetilde{M}\mapsto {\bf F}(\tz_\eps,\eps)\in\mathbb{R}^k$ be a uniformly  $\mathcal{C}^{1}$-smooth  mapping on  $\widetilde{M}\times\R$.

We define the integral operators
\begin{equation}\label{eqn:master_operators}
\begin{split}
\mathfrak{I}^+(\bF,\tPhi^\tau_\eps, \tz_\eps)=&\int_{0}^{+\infty} \left( \mathbf{F}(\tPhi^\tau_\eps(\tz_\eps^+))-\mathbf{F}(\tPhi^\tau_\eps(\tz_\eps))\right)d\tau,\\
\mathfrak{I}^-(\bF,\tPhi^\tau_\eps,\tz_\eps)=&\int_{-\infty}^{0} \left( \mathbf{F}(\tPhi^\tau_\eps(\tz^-_\eps))-\mathbf{F}(\tPhi^\tau_\eps(\tz_\eps)\right)d\tau.
\end{split}\end{equation}

\begin{lem}[Master Lemma 1]\label{lem:master_1}
The  improper integrals \eqref{eqn:master_operators} are convergent.
The operators $\mathfrak{I}^+(\bF,\tPhi^\tau_\eps, z_\eps)$ and $\mathfrak{I}^-(\bF,\tPhi^\tau_\eps, z_\eps)$
are linear in $\bF$.
\end{lem}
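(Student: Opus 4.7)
The plan is to establish convergence of both improper integrals by exploiting the exponential decay given by the stable/unstable fiber estimates \eqref{eqn:convergence_s} and \eqref{eqn:convergence_u}, and then to read off linearity directly from the definition. The only work is the convergence claim.

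First I would bound the integrand of $\mathfrak{I}^+$. Since $\tz_\eps\in W^\st(\tz^+_\eps)$ and $\tz^+_\eps=\Omega^+(\tz_\eps)\in\tLambda_\eps$, the estimate \eqref{eqn:convergence_s}, applied to the perturbed NHIM $\tLambda_\eps$ as in Section~\ref{sec:perturbed_NHIM}, gives
\begin{equation*}
d\bigl(\tPhi^\tau_\eps(\tz_\eps),\tPhi^\tau_\eps(\tz^+_\eps)\bigr)\le C_{\tz_\eps}\,e^{\tau\lambda_+},\qquad \tau\ge 0,
\end{equation*}
with $\lambda_+<0$. Because $\bF$ is uniformly $\mathcal{C}^1$ on $\tM\times\R$, it is locally Lipschitz; since the orbit segments involved remain in the compact neighborhood $\mathscr{K}$ of the homoclinic channel (where we can fix a global Lipschitz constant $L$ for $\bF$), we obtain
\begin{equation*}
\bigl\|\mathbf{F}(\tPhi^\tau_\eps(\tz^+_\eps))-\mathbf{F}(\tPhi^\tau_\eps(\tz_\eps))\bigr\|\le L\,C_{\tz_\eps}\,e^{\tau\lambda_+},\qquad \tau\ge 0.
\end{equation*}
The right-hand side is integrable on $[0,+\infty)$, so $\mathfrak{I}^+$ is absolutely convergent.

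Next I would handle $\mathfrak{I}^-$ by the symmetric argument, using $\tz_\eps\in W^\un(\tz^-_\eps)$ and \eqref{eqn:convergence_u} to bound
\begin{equation*}
d\bigl(\tPhi^\tau_\eps(\tz_\eps),\tPhi^\tau_\eps(\tz^-_\eps)\bigr)\le C_{\tz_\eps}\,e^{\tau\mu_-},\qquad \tau\le 0,
\end{equation*}
with $\mu_->0$, so that the integrand decays like $e^{\tau\mu_-}$ as $\tau\to-\infty$. Absolute convergence on $(-\infty,0]$ follows in the same way. The constants $C_{\tz_\eps}$ and $L$ can be taken uniformly in $\eps$ and locally uniformly in $\tz_0$, as noted in Section~\ref{sec:perturbed_NHIM}, which also shows that these integrals depend continuously on the base point and the perturbation parameter.

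Finally, linearity of $\mathfrak{I}^+$ and $\mathfrak{I}^-$ in $\bF$ is immediate: if $\bF=\alpha\bF_1+\beta\bF_2$ for constants $\alpha,\beta$ and $\mathcal{C}^1$ maps $\bF_1,\bF_2$, then each integrand splits linearly, and by absolute convergence (just established) the integral of the sum equals the sum of the integrals, so $\mathfrak{I}^\pm(\alpha\bF_1+\beta\bF_2,\tPhi^\tau_\eps,\tz_\eps)=\alpha\,\mathfrak{I}^\pm(\bF_1,\tPhi^\tau_\eps,\tz_\eps)+\beta\,\mathfrak{I}^\pm(\bF_2,\tPhi^\tau_\eps,\tz_\eps)$. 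There is no real obstacle here; the only subtle point is ensuring that the Lipschitz constant and the constant $C_{\tz_\eps}$ can be chosen uniformly, which is guaranteed by the uniform $\mathcal{C}^r$-differentiability of $\X^1$ together with relative compactness of $\mathscr{K}$.
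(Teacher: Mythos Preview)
Your argument is correct and is the standard one: exponential contraction along stable/unstable fibers plus a Lipschitz bound on $\bF$ gives an integrable majorant, and linearity is then immediate. The paper does not spell out a proof of this lemma at all; it simply cites \cite{gidea2021global}, where exactly this exponential-decay-plus-Lipschitz argument is carried out.

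One small imprecision: you invoke compactness of $\mathscr{K}$ to get a Lipschitz constant, but in the \emph{extended} phase space $\tM=M\times\R$ the orbits $\tPhi^\tau_\eps(\tz)$ have unbounded $t$-component, so they do not stay in a compact set. This is harmless, however, because the hypothesis that $\bF$ is \emph{uniformly} $\mathcal{C}^1$ on $\tM\times\R$ already gives a global Lipschitz constant $L$ without needing compactness. With that adjustment the proof goes through verbatim.
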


\begin{lem}[Master Lemma 2]\label{lem:master_2}
\begin{equation}\label{eqn:master_differences}
\begin{split}
\bF(\tz^+_\eps)-\bF(\tz_\eps)=&-\mathfrak{I}^+((\X^0+\eps\X^1)\bF,\tPhi^\tau_\eps, \tz_\eps),\\
\bF(\tz^-_\eps)-\bF(\tz_\eps)=&\mathfrak{I}^-((\X^0+\eps\X^1)\bF,\tPhi^\tau_\eps, \tz_\eps).
\end{split}
\end{equation}
\end{lem}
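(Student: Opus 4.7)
The plan is to derive both equations by applying the Fundamental Theorem of Calculus to the $\tau$-dependent function
\[
G(\tau) := \bF(\tPhi^\tau_\eps(\tz^+_\eps)) - \bF(\tPhi^\tau_\eps(\tz_\eps)),
\]
and then taking a limit at $\tau \to +\infty$. First I would note that, because $\tPhi^\tau_\eps$ is the flow of the vector field $\X^0 + \eps\X^1$, the action of this vector field as a differential operator (formula \eqref{eqn:vf_derivative}) gives
\[
G'(\tau) = \bigl[(\X^0+\eps\X^1)\bF\bigr](\tPhi^\tau_\eps(\tz^+_\eps)) - \bigl[(\X^0+\eps\X^1)\bF\bigr](\tPhi^\tau_\eps(\tz_\eps)),
\]
so that for every finite $T>0$,
\[
G(T) - G(0) = \int_0^T G'(\tau)\, d\tau.
\]
Here $G(0) = \bF(\tz^+_\eps) - \bF(\tz_\eps)$ is exactly the quantity we want to compute.

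Next I would send $T\to +\infty$. Since $\tz_\eps \in W^\st(\tz^+_\eps)$ by the definition of the projection $\Omega^+$, the exponential stable contraction \eqref{eqn:convergence_s} gives
\[
d\bigl(\tPhi^\tau_\eps(\tz_\eps),\tPhi^\tau_\eps(\tz^+_\eps)\bigr) \leq C_{\tz_\eps}\, e^{\tau\lambda_+}, \quad \lambda_+ < 0,
\]
uniformly in $\eps\in(-\eps_1,\eps_1)$ (using that we work in the compact neighborhood $\mathscr{K}$ from \textbf{(A-ii)}, where $\X^1$ is uniformly $\mathcal{C}^r$). Because $\bF$ is uniformly $\mathcal{C}^1$, this implies $|G(\tau)| = O(e^{\tau\lambda_+})\to 0$ as $\tau\to+\infty$. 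Taking the limit in the displayed identity yields
\[
-\bF(\tz^+_\eps) + \bF(\tz_\eps) = \int_0^{+\infty} G'(\tau)\,d\tau = \mathfrak{I}^+\bigl((\X^0+\eps\X^1)\bF,\tPhi^\tau_\eps,\tz_\eps\bigr),
\]
which is the first equation in \eqref{eqn:master_differences}. The convergence of the right-hand improper integral is exactly the content of Master Lemma 1, which I would invoke to justify interchanging limit and integration.

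The second identity follows by the symmetric argument with $\tau$ reversed: apply the Fundamental Theorem of Calculus to $H(\tau) := \bF(\tPhi^\tau_\eps(\tz^-_\eps)) - \bF(\tPhi^\tau_\eps(\tz_\eps))$ on $[-T,0]$, observe $H(0) = \bF(\tz^-_\eps) - \bF(\tz_\eps)$, and use the unstable contraction \eqref{eqn:convergence_u} in the form $d(\tPhi^\tau_\eps(\tz_\eps),\tPhi^\tau_\eps(\tz^-_\eps)) \le C_{\tz_\eps} e^{\tau\mu_-}$ for $\tau\le 0$, with $\mu_- > 0$, to conclude $H(-T)\to 0$ as $T\to +\infty$. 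The resulting sign is opposite because the integration runs from $-\infty$ to $0$.

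The main technical obstacle is the uniform control needed to justify $G(\tau)\to 0$ and the passage to the limit: one needs the constants $C_{\tz_\eps}$ in \eqref{eqn:convergence_s}--\eqref{eqn:convergence_u} to be uniformly bounded (given by the compact setup in $\mathscr{K}$), and one must know a priori that $(\X^0+\eps\X^1)\bF$ composed with the relevant orbits is integrable on the half-line; both are assumed or established in the background results (Fenichel theory and Master Lemma 1). Given these, the argument is essentially the Fundamental Theorem of Calculus plus a decay estimate.
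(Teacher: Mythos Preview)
Your argument is correct and is precisely the natural proof: apply the Fundamental Theorem of Calculus to $\bF$ along the perturbed flow, use that the flow derivative is $(\X^0+\eps\X^1)\bF$, and kill the boundary term at infinity via the exponential stable/unstable contraction. The paper itself does not give a proof of this lemma; it merely refers to \cite{gidea2021global} (with related arguments in \cite{gidea2018global}), where the same Fundamental-Theorem-of-Calculus computation is carried out. So your approach coincides with what the cited reference does.
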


\begin{lem}[Master Lemma 3]\label{lem:master_3}
\begin{equation}\label{eqn:master_gronwall}
\begin{split}
\mathfrak{I}^+(\bF,\tPhi^\tau_\eps, \tz_\eps)=&\mathfrak{I}^+(\bF,\tPhi^\tau_0, \tz_0)+O(\eps^{\varrho}),\\
\mathfrak{I}^-(\bF,\tPhi^\tau_\eps, \tz_\eps)=&\mathfrak{I}^-(\bF,\tPhi^\tau_0, \tz_0)+O(\eps^{\varrho}),
\end{split}
\end{equation}
for $0<\varrho<1$. The integrals on the right-hand side are evaluated with $\X^1=\X^1(\cdot;0)$.
\end{lem}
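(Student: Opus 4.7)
The plan is to prove each estimate in \eqref{eqn:master_gronwall} by splitting the improper integral at a time $T = T(\eps) := k\ln(1/\eps)$ chosen to balance two complementary bounds: exponential attraction on the NHIM fibers controls the \emph{tail}, and Gronwall's inequality controls the \emph{inner window}. I will describe the argument for $\mathfrak{I}^+$; the case of $\mathfrak{I}^-$ is completely analogous with time reversed, using the unstable fiber estimate \eqref{eqn:convergence_u} instead of \eqref{eqn:convergence_s}.

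For the tail $\tau \in [T,+\infty)$, the stable-fiber contraction \eqref{eqn:convergence_s}, which is \emph{uniform} in $\eps$ for $|\eps|<\eps_1$ once we restrict to the compact set $\mathscr{K}$, gives
\[
  d\bigl(\tPhi^\tau_\eps(\tz_\eps),\tPhi^\tau_\eps(\tz^+_\eps)\bigr) \le C\, e^{\tau\lambda_+}, \qquad \tau\ge 0.
\]
Since $\bF$ is uniformly Lipschitz, the integrand of $\mathfrak{I}^+(\bF,\tPhi^\tau_\eps,\tz_\eps)$ is bounded by $C' e^{\tau\lambda_+}$, hence the tail contributes at most $(C'/|\lambda_+|)e^{T\lambda_+}=(C'/|\lambda_+|)\eps^{k|\lambda_+|}$. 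The same bound holds at $\eps=0$. I will choose $k$ large enough that $k|\lambda_+|\ge \varrho$ so that both tails are absorbed in $O(\eps^\varrho)$.

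For the inner window $\tau \in [0,T]$, smooth dependence of the NHIM, of its stable foliation, and of the homoclinic channel on the parameter gives $\tz_\eps=\tz_0+O(\eps)$ and $\tz^+_\eps=\tz^+_0+O(\eps)$. Applying Gronwall's inequality (Lemma \ref{lem:Gronwall_application}) to the pairs of flows $(\tPhi^\tau_\eps(\tz_\eps),\tPhi^\tau_0(\tz_0))$ and $(\tPhi^\tau_\eps(\tz^+_\eps),\tPhi^\tau_0(\tz^+_0))$, whose initial separations are $O(\eps)$, yields
\[
  \bigl\|\tPhi^\tau_\eps(\tz_\eps)-\tPhi^\tau_0(\tz_0)\bigr\|\le K\eps^{\varrho_0},\qquad
  \bigl\|\tPhi^\tau_\eps(\tz^+_\eps)-\tPhi^\tau_0(\tz^+_0)\bigr\|\le K\eps^{\varrho_0},
\]
valid uniformly on $0\le\tau\le k\ln(1/\eps)$ for any fixed $\varrho_0\in(\varrho,1)$ provided $k$ is chosen compatibly with the Lipschitz constant of $\X^0$. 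Combining with the uniform $\mathcal{C}^1$-bound on $\bF$ (and, if $\bF$ depends on $\eps$, the trivial $O(\eps)$ Taylor estimate in the parameter slot, which justifies the remark that the right-hand side of \eqref{eqn:master_gronwall} is evaluated at $\X^1=\X^1(\cdot;0)$), the pointwise difference of the $\eps$- and $0$-integrands is $O(\eps^{\varrho_0})$. Integrating over an interval of length $T\sim\ln(1/\eps)$ produces $O(\eps^{\varrho_0}\ln(1/\eps))=O(\eps^\varrho)$.

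The main obstacle is that Gronwall is only useful on a logarithmically long window, while exponential attraction on the stable fiber provides only \emph{polynomial}-in-$\eps$ smallness for the tail; this is precisely why the two estimates must be joined at $T\sim\ln(1/\eps)$, and why one loses the logarithmic factor that forces passing from $\varrho_0$ to the strictly smaller $\varrho$. Adding the tail and inner contributions gives the claimed $O(\eps^\varrho)$ bound for $\mathfrak{I}^+$, and the identical argument with the roles of past and future interchanged handles $\mathfrak{I}^-$.
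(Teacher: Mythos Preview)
Your proof is correct and follows precisely the standard strategy that the paper invokes by reference to \cite{gidea2021global}: split the improper integral at $T=k\ln(1/\eps)$, control the tail via the uniform stable-fiber contraction \eqref{eqn:convergence_s}, and control the finite window via the Gronwall estimate of Lemma~\ref{lem:Gronwall_application} applied to the $O(\eps)$-separated initial data $\tz_\eps,\tz^+_\eps$ versus $\tz_0,\tz^+_0$. One small point worth making explicit is the compatibility of constants: the simultaneous constraints $k\le (1-\varrho_0)/C_0$ (from Gronwall) and $k|\lambda_+|\ge\varrho$ (from the tail) force $\varrho<|\lambda_+|/(|\lambda_+|+C_0)$, so the lemma yields \emph{some} $\varrho\in(0,1)$ rather than every $\varrho$; this is how the statement is used throughout the paper, where the $O(\eps^{1+\varrho})$ remainders are subsequently upgraded to $O(\eps^2)$ by Taylor-matching arguments.
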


\begin{lem}[Master Lemma 4]\label{lem:master_4}
If $\textbf{F}=O_{\mathcal{C}^1}(\eps)$ then
\begin{equation}\label{eqn:master_gronwall}
\begin{split}
\mathfrak{I}^+(\bF,\tPhi^\tau_\eps, z_\eps)=&\mathfrak{I}^+(\bF,\tPhi^\tau_0, z_0)+O(\eps^{1+\varrho}),\\
\mathfrak{I}^-(\bF,\tPhi^\tau_\eps, z_\eps)=&\mathfrak{I}^-(\bF,\tPhi^\tau_0, z_0)+O(\eps^{1+\varrho}),
\end{split}
\end{equation}
for $0<\varrho<1$.
The integrals on the right-hand side are evaluated with $\X^1=\X^1(\cdot;0)$.
\end{lem}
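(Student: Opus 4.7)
The plan is to reduce Master Lemma~\ref{lem:master_4} to Master Lemma~\ref{lem:master_3} by exploiting the linearity of the operators $\mathfrak{I}^\pm$ in their first argument, which is guaranteed by Master Lemma~\ref{lem:master_1}. The hypothesis $\bF=O_{\mathcal{C}^1}(\eps)$ (unpacking the notation, this means $\|\bF\|_{\mathcal{C}^1}\le M|\eps|$) allows us to factor $\bF(\tz,\eps)=\eps\,\mathbf{G}(\tz,\eps)$, where $\mathbf{G}(\tz,\eps):=\bF(\tz,\eps)/\eps$ for $\eps\ne 0$, and $\mathbf{G}$ is uniformly $\mathcal{C}^1$-bounded in $\tz$ with a constant independent of $\eps\in(-\eps_1,\eps_1)\setminus\{0\}$.

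By the linearity of $\mathfrak{I}^\pm$ in $\bF$, both sides of the claim factor out a scalar $\eps$:
\begin{equation*}
\mathfrak{I}^\pm(\bF,\tPhi^\tau_\eps,\tz_\eps)=\eps\,\mathfrak{I}^\pm(\mathbf{G},\tPhi^\tau_\eps,\tz_\eps),\qquad
\mathfrak{I}^\pm(\bF,\tPhi^\tau_0,\tz_0)=\eps\,\mathfrak{I}^\pm(\mathbf{G},\tPhi^\tau_0,\tz_0).
\end{equation*}
I would then apply Master Lemma~\ref{lem:master_3} to the uniformly $\mathcal{C}^1$-bounded function $\mathbf{G}$, obtaining
\begin{equation*}
\mathfrak{I}^\pm(\mathbf{G},\tPhi^\tau_\eps,\tz_\eps)=\mathfrak{I}^\pm(\mathbf{G},\tPhi^\tau_0,\tz_0)+O(\eps^{\varrho})
\end{equation*}
for some $\varrho\in(0,1)$. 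Multiplying this estimate by $\eps$ and substituting the factorizations above yields the desired conclusion $\mathfrak{I}^\pm(\bF,\tPhi^\tau_\eps,\tz_\eps)=\mathfrak{I}^\pm(\bF,\tPhi^\tau_0,\tz_0)+O(\eps^{1+\varrho})$, where on the right the evaluation with $\X^1=\X^1(\cdot;0)$ is inherited from Master Lemma~\ref{lem:master_3}.

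The one delicate point is to verify that the implicit constant in the $O(\eps^\varrho)$ produced by Master Lemma~\ref{lem:master_3} depends on $\mathbf{G}$ only through its uniform $\mathcal{C}^1$-bound, so that this constant remains bounded as $\eps\to0$; otherwise the multiplication by $\eps$ could fail to produce a genuine $O(\eps^{1+\varrho})$ term. Inspecting the proof of Master Lemma~\ref{lem:master_3}, which is based on splitting the improper integral as $\int_0^{T(\eps)}+\int_{T(\eps)}^{+\infty}$ with $T(\eps)=k\log(1/\eps)$, using Gronwall's inequality (Lemma~\ref{lem:Gronwall_application}) on the compact piece and the exponential convergence along stable/unstable fibers on the tail, one sees that both estimates are controlled purely by $\|\mathbf{G}\|_{\mathcal{C}^1}$ together with the hyperbolicity constants of $\tLambda_\eps$, which are uniform in $\eps$. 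Thus the main obstacle is merely bookkeeping this uniformity in $\eps$; no new analytic idea beyond those already in Master Lemma~\ref{lem:master_3} is required.
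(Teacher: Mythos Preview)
The paper does not give a self-contained proof of this lemma; it defers to \cite{gidea2021global}. Your reduction to Master Lemma~\ref{lem:master_3} by factoring $\bF=\eps\,\mathbf{G}$ and using the linearity from Master Lemma~\ref{lem:master_1} is correct and is the natural argument; the uniformity check you isolate---that the $O(\eps^\varrho)$ constant in Lemma~\ref{lem:master_3} depends on the integrand only through its uniform $\mathcal{C}^1$ bound, hence stays bounded for the family $\mathbf{G}(\cdot,\eps)$---is exactly the point that needs to be verified, and your sketch of why it holds (Gronwall on $[0,T(\eps)]$ plus exponential tails) is right.
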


The proofs of the above lemmas can be found in \cite{gidea2021global},
and similar arguments can be found in \cite{gidea2018global}.



\bibliographystyle{alpha}
\bibliography{diffusion}

\end{document}